\documentclass[final]{siamart171218}
\usepackage{hyperref}
\usepackage{epsfig}
\usepackage{amsfonts}
\usepackage{algorithm}
\usepackage{algorithmic}
\usepackage{graphicx}
\usepackage{subfigure}
\usepackage{array}
\usepackage{enumerate}
\usepackage{enumitem}
\usepackage{rotating}
\usepackage[T1]{fontenc}

\usepackage{pgfplots,tikz}

\newcommand\Item[1][]{%
  \ifx\relax#1\relax  \item \else \item[#1] \fi
  \abovedisplayskip=0pt\abovedisplayshortskip=0pt~\vspace*{-1.25\baselineskip}}

\newcommand{\sr}{\mathsf{r}}

\newcommand{\red}{\mathsf{red}}

\newtheorem{assumption}[theorem]{Assumption}

\xdefinecolor{lavendar}{rgb}{0.4,0.2,0.6}
\xdefinecolor{lavender}{rgb}{0.92,0.9,0.97}
\xdefinecolor{paleblue}{rgb}{0.3,0.3,0.7}
\xdefinecolor{darkblue}{rgb}{0.15,0.15,0.9}
\xdefinecolor{mygreen}{rgb}{0.1,0.6,0.1}
\xdefinecolor{mydarkgreen}{rgb}{0.1,0.4,0.1}



\usepackage{todonotes}

\bibliographystyle{siamplain}

\headers{E. Mengi}{Finding Locally Optimal Solutions in ${\mathcal L}_\infty$ Model Reduction}

\title{A Subspace Framework for ${\mathcal L}_\infty$ Model Reduction}

\author{
Emre Mengi\thanks{Ko\c{c} University, Department of Mathematics, Rumeli Feneri Yolu 34450, Sar{\i}yer, Istanbul, Turkey, E-Mail: \texttt{emengi@ku.edu.tr}.} 
}

\pagestyle{myheadings}
\thispagestyle{plain}

\begin{document}
\maketitle

\begin{abstract}
We consider the problem of locating a nearest descriptor system of prescribed reduced order
to a descriptor system with large order with respect to the ${\mathcal L}_\infty$ norm.
Widely employed approaches such as the balanced truncation and best Hankel norm approximation
for this ${\mathcal L}_\infty$ model reduction problem are usually expensive and  yield solutions
that are not optimal, not even locally. We propose approaches based on the minimization
of the ${\mathcal L}_\infty$ objective by means of smooth optimization techniques. As we illustrate,
direct applications of smooth optimization techniques are not feasible, 
since the optimization techniques converge at best at a linear rate requiring too many evaluations
of the costly ${\mathcal L}_\infty$-norm objective to be practical. We replace the original large-scale 
system with a system of smaller order that interpolates the original system at points on the imaginary axis,
and minimize the ${\mathcal L}_\infty$ objective after this replacement. The smaller system 
is refined by interpolating at additional imaginary points determined based on the local minimizer of the 
${\mathcal L}_\infty$ objective, and the optimization is repeated. We argue the framework converges
at a quadratic rate under smoothness and nondegeneracy assumptions, and describe how asymptotic
stability constraints on the reduced system sought can be incorporated into our approach. The numerical
experiments on benchmark examples illustrate that the approach leads to locally optimal solutions to 
the ${\mathcal L}_\infty$ model reduction problem, and the convergence occurs quickly for descriptors 
systems of order a few ten thousands.
\end{abstract}

\begin{keywords}
 ${\mathcal H}_\infty$ model reduction, descriptor system, quasi-Newton methods, Petrov-Galerkin projection, Hermite interpolation
\end{keywords}

\begin{AMS}
65D05, 65F15, 65L80, 90C53, 93A15, 93C05
\end{AMS}

\section{Introduction}
Various applications give rise to descriptor systems with large order. A model order reduction
technique typically aims at approximating the large order system with a system of much smaller
and prescribed order. There are several powerful numerical approaches for the model order 
reduction of descriptor systems at the moment. However, to our knowledge, there does not exist 
a work that addresses the determination of optimal reduced order systems with respect to the 
${\mathcal L}_\infty$ norm. Even finding a locally optimal solution for the ${\mathcal L}_\infty$-norm 
model reduction problem is not addressed thoroughly. The ${\mathcal H}_\infty$-norm model
reduction problem is closely related with the system at hand asymptotically stable,
and the reduced order system sought required to be asymptotically stable.

A descriptor system is often available in the state-space representation of the form
\begin{equation}\label{eq:state_space}
		E x'(t)	\;	 =	\;	A x(t)		+	B u(t),	\quad\quad
		y(t)	\;	 =	\;	C x(t)	+	D u(t),
\end{equation}
for given matrices $E, A \in {\mathbb R}^{n\times n}$, $B \in {\mathbb R}^{n\times m}$, 
$C \in {\mathbb R}^{p\times n}$, $D \in {\mathbb R}^{p\times m}$. 
The ${\mathcal L}_\infty$ norm of the transfer function
\begin{equation}\label{eq:full_transfer}
	H(s)	\;	=	\;	C(s E - A)^{-1} B		+	D
\end{equation}
of the system in (\ref{eq:state_space}) is defined as
\[
	\| H \|_{{\mathcal L}_\infty}	\;	:=		\;		\sup_{\omega \in {\mathbb R}} \: \sigma_{\max} ( H({\rm i} \omega)	)
				\;	=	\;	\sup_{\omega \in {\mathbb R}, \omega \geq 0} \: \sigma_{\max} ( H({\rm i} \omega)	),
\]
where $\sigma_{\max}(\cdot)$ denotes the largest singular value of its matrix argument,
and the last equality holds as $A, B, C, D, E$ are real matrices. 
Note that we customarily set 
$\, \| H \|_{{\mathcal L}_\infty}	=	\sup_{\omega \in {\mathbb R}} \: \sigma_{\max} ( H({\rm i} \omega)	)	=	\infty \,$
if $H$ has a pole on the imaginary axis, or the norm of its restriction to the imaginary axis is not bounded.  
If the descriptor system is asymptotically stable,  then its ${\mathcal L}_\infty$ norm
reduces to the ${\mathcal H}_\infty$-norm. Formally, let us denote with $L^k_2$ 
the space of functions $f : {\mathbb R} \rightarrow {\mathbb R}^k$ satisfying 
$\| f \|_{L^k_2} := \sqrt{\int_{-\infty}^{\infty} \| f(t) \|^2_2 \: {\rm d} t} < \infty$ with $k = m$ or $k = p$.
For simplicity, we omit the dependence of the space on the dimension $k$,
and write $L_2$ as well as $\| f \|_{L_2}$, as which space ($L_2^m$ or $L_2^p$) 
is referred to will be clear from the context. Moreover, suppose the system in
(\ref{eq:state_space}) is asymptotically stable with poles in the open left half of the
complex plane. 
Then the ${\mathcal L}_{\infty}$ norm of $H$ is the same as the ${\mathcal H}_\infty$
norm of $H$ defined as
\[
       \| H \|_{{\mathcal H}_\infty}	\,	:=		\,		
                    \sup_{s \in {\mathbb C}^+} \: \sigma_{\max} ( H(s)	) ,
\]
which in turn is equal to induced norm of the operator $\phi : L_2 \rightarrow L_2$
associated with (\ref{eq:state_space})  in the time domain that maps $u$ to $y$
defined as
\[
      \| \phi \|_{L_2}
           \;   :=   \;
	   \max  \left\{  \| \phi \, u \|_{L_2} 
	                                      \; | \;  u \in L_2 \text{ s.t. } \| u \|_{L_2} = 1
	                          \right\}   \, .
\]
Hence, under the asymptotic stability assumption on the descriptor system in (\ref{eq:state_space}),
we have
$
        \| H \|_{{\mathcal L}_\infty}	    = 
              \| H \|_{{\mathcal H}_\infty}	
					=	
		\| \phi \|_{L_2}$.				

The ${\mathcal L}_\infty$-norm model order reduction problem -- or the ${\mathcal L}_\infty$ 
model reduction problem in short -- for a given descriptor system
of order $n$ and for a prescribed positive integer 
$\sr < n$ concerns finding a reduced descriptor system of order $\sr$
that is closest to the given system of order $n$ with respect to the ${\mathcal L}_\infty$ norm.
Formally, let
$S^{\red} = (A^{\red}, E^{\red}, B^{\red}, C^{\red}, D^{\red})$
denote a system of order $\sr$ with the state-space representation
\begin{equation}\label{eq:state_space_red}
		E^{\red} \, x'(t)	\;	 =	\;	A^{\red} \, x(t)		+	B^{\red} \, u(t),	\quad\quad
		y(t)	\;	 =	\;	C^{\red} \, x(t)	+	D^{\red} \, u(t)
\end{equation}
described by the matrices $E^{\red}, A^{\red} \in {\mathbb R}^{\sr \times \sr}$, 
$B^{\red} \in {\mathbb R}^{\sr \times m}$,  $C^{\red} \in {\mathbb R}^{p\times \sr}$, 
$D^{\red} \in {\mathbb R}^{p\times m}$, and with the transfer function
\begin{equation}\label{eq:red_trans}
	H(s; S^{\red})	\;	=	\;	C^{\red}(s E^{\red} - A^{\red})^{-1} B^{\red}		+	D^{\red}.
\end{equation}
Furthermore, let $S = (A, E, B, C, D)$ be the given system of order $n$
and with the transfer function as in (\ref{eq:full_transfer}).
The ${\mathcal L}_\infty$ model reduction problem involves finding 
a descriptor system $S^{\red}_\star$ of order $\sr$ that minimizes the objective
\begin{equation}\label{eq:objective}
\begin{split}
	\| H -	H( \: \cdot \: ; S^{\red})	\|_{{\mathcal L}_\infty}	\;\;	&	=	\;\;\;\;\,
	\sup_{\omega \in {\mathbb R}} \:\: 
	\left[
		\sigma(\omega; S^{\red}) \:  :=  \: \sigma_{\max} (   H({\rm i} \omega)	-	
															H({\rm i} \omega ; S^{\red} )   )	
	\right]	\\
	&	=	\;\;	\sup_{\omega \in {\mathbb R}, \omega \geq 0} \:\: \sigma(\omega; S^{\red}) 
\end{split}
\end{equation}
over all descriptor systems $S^{\red}$ of order $\sr$.  The problem at hand, in particular
the objective in (\ref{eq:objective}), is non-convex, and here we aim to determine a local 
minimizer of the objective in (\ref{eq:objective}) numerically. The quality of the determined
local minimizer also matters, however this issue is largely dependent upon with which reduced system
of order $\sr$ our approach is initialized.

Two important remarks are in order regarding the minimization of the objective in (\ref{eq:objective}).
First, in addition to non-convexity, an additional difficulty is the nonsmooth nature of the problem. 
The objective in (\ref{eq:objective})  as a function of $S^{\red}$  is typically not differentiable 
when $\sigma(\omega; S^{\red})$ has multiple global maximizers over $\omega \geq 0$.
Secondly, under asymptotic stability assumptions on the original system and the reduced system,
the error $\| H -	H( \: \cdot \: ; S^{\red}) \|_{{\mathcal L}_\infty}$ gives a uniform upper bound
on how much the outputs of the original and reduced systems can differ. To be precise, suppose
that the system $S$ of order $n$, and the reduced system $S^\red$ of order $\sr$ are 
asymptotically stable. Furthermore, let us denote with $\phi$ and $\phi_{\sr}$ the operators 
in the time domain corresponding to the systems in (\ref{eq:state_space}) and (\ref{eq:state_space_red}),
respectively. For every $u \in L_2$, we have
\[
      \| y - y_{\sr} \|_{L_2} \;  \leq \;  \| H -	H( \: \cdot \: ; S^{\red})	\|_{{\mathcal L}_\infty}  \| u \|_{L_2}
\]
where $y, y_{\sr}$ are such that $y = \phi \, u$ and $y_{\sr} = \phi_{\sr} \, u$. This means that
if a small error $\| H -	H( \: \cdot \: ; S^{\red}) \|_{{\mathcal L}_\infty}$ can be ensured by the
minimization of (\ref{eq:objective}), then the output $y_{\sr} = \phi_{\sr} \, u$ of the minimizing
reduced system approximates the original output $y = \phi \, u$ well uniformly over every input 
$u$ of prescribed norm.

\subsection{Literature and Contributions}
For an asymptotically stable descriptor system with the transfer function $H$, the $H_\infty$
model reduction problem - that is, for a given small order $\sr$, finding an asymptotically stable 
system $S^\red$ of order $\sr$ such that $\| H - H( \: \cdot \: ; S^{\red}) \|_{{\mathcal H}_\infty}$ 
is small - has been under consideration for long time. One of the classical approaches for the
${\mathcal H}_\infty$ model reduction problem is the balanced truncation, which determines
a state transformation so that the observability and controllability grammians are the same diagonal 
matrix, and truncates the system matrices after applying this state transformation
\cite{DulP2010, Ant2005, Moo1981, MulR1976}.
The reduced system by the balanced truncation is typically not even a local minimizer of 
$\| H - H( \: \cdot \: ; S^{\red}) \|_{{\mathcal H}_\infty}$ over $S^{\red}$, though it usually is
a good quality approximation of $H$ with respect to the ${\mathcal H}_\infty$ norm \cite{GugA2004}.
The major difficulty with the balanced truncation that limits its applicability to larger systems
is that it requires the solution of two Lyapunov equations involving matrices of size equal
to the order of the system. With the iterative approaches for the solution of the Lyapunov equations,
such as the ADI method \cite{GugSA2003, Sty2004, Pen1999}, the balanced truncation is 
applicable to systems with higher order, but still Lyapunov equations stand as a hurdle.

A classical alternative is finding a best approximation with respect to the Hankel norm (HNA)
\cite{Glo1984} rather than the ${\mathcal H}_\infty$ norm.
Approaches to compute a globally optimal solution to HNA in polynomial time are proposed \cite{Glo1984}.
However, the globally optimal solution to HNA
is again usually not even a local minimizer of $\| H - H( \: \cdot \: ; S^{\red}) \|_{{\mathcal H}_\infty}$.
Furthermore, finding a globally optimal solution to HNA is even more costlier than the balanced
truncation. Even with the efficient use of computational linear algebra tools \cite{BenQQ2004},
solving HNA for systems with high order is out of reach.

Here, we propose an approach to compute a local minimizer of
$\| H - H( \: \cdot \: ; S^{\red}) \|_{{\mathcal L}_\infty}$ over all systems $S^{\red}$
of order $\sr$. To our knowledge, the approach is the first attempt 
to find such a locally optimal solution. The approach uses smooth optimization
techniques, which have been employed for solving
nonsmooth optimization problems \cite{LewO2013, AslO2020} in the last fifteen years.
Most often, they seem to be capable of locating locally optimal solutions,
but slowly at best at a linear rate. Consequently, as we shall see below, a direct application
of them to minimize $\| H - H( \: \cdot \: ; S^{\red}) \|_{{\mathcal L}_\infty}$
is prohibitively expensive even for systems of medium order, as it requires
the computation of the objective, that is the ${\mathcal L}_\infty$ norm, too
many times. Instead, we replace $H$ with an approximation $\widetilde{H}$
of small order greater than $\sr$. Rather than $\| H - H( \: \cdot \: ; S^{\red}) \|_{{\mathcal L}_\infty}$,
we minimize $\| \widetilde{H} - H( \: \cdot \: ; S^{\red}) \|_{{\mathcal L}_\infty}$,
then update $\widetilde{H}$ based on the minimizer, and repeat. The approximation
$\widetilde{H}$ is built using the Petrov-Galerkin framework, and an update
involves the expansion of the projection subspaces for the Petrov-Galerkin framework. 
We show that the proposed framework converges quadratically under simplicity
assumptions. We also describe how the asymptotic stability constraints
can be imposed on the variable $S^{\red}$ when minimizing $\| H - H( \: \cdot \: ; S^{\red}) \|_{{\mathcal L}_\infty}$
in case the original system in (\ref{eq:state_space}) is asymptotically stable.
As the system corresponding to $H - H( \: \cdot \: ; S^{\red})$
is asymptotically stable when $S$ and $S^\red$ is asymptotically stable,
the incorporation of this constraint into our approach leads to a 
local minimization of $\| H - H( \: \cdot \: ; S^{\red}) \|_{{\mathcal H}_\infty}$
over all asymptotically stable systems $S^\red$ of order $\sr$, i.e., locally
optimal solution of the ${\mathcal H}_\infty$ model reduction problem.

Iterative rational Krylov algorithm (IRKA) \cite{GugAB2008} is proposed to find a reduced 
order system of prescribed order that is locally optimal with respect to the ${\mathcal H}_2$
norm defined as 
$\| H \|_{{\mathcal H}_2} = \sqrt{\frac{1}{2\pi} \int_{-\infty}^{\infty} 
				\text{trace}( H({\rm i} \omega)^\ast H({\rm i} \omega) ) \: {\rm d} \omega}$
for a system with the transfer function $H$. Formally, IRKA is an iterative interpolatory
approach that finds a local minimizer of $\| H - H( \: \cdot \: ; S^{\red}) \|_{{\mathcal H}_2}$ 
over all systems $S^{\red}$ of order $\sr$. In \cite{FlaBG2013}, starting from the reduced
order system of order $\sr$ generated by IRKA, an optimization based approach is 
proposed to find a locally optimal solution of $\| H - H( \: \cdot \: ; S^{\red}) \|_{{\mathcal H}_\infty}$
for single-input-single-output (SISO) systems but with respect to particular rank-one modifications
$\Delta_A = \varepsilon {\mathbf e}{\mathbf e}^T$, $\Delta_B = -\varepsilon {\mathbf e}$, 
$\Delta_C = -\varepsilon {\mathbf e}^T$, $\Delta_D = \varepsilon$ of the system matrices 
$A^\red$, $B^\red$, $C^\red$, $D^\red$ generated by IRKA over the optimization parameter $\varepsilon$. 
In the reported results in \cite{FlaBG2013}, this optimization improves the accuracy of the reduced system 
returned by IRKA by a factor of 2-4 with respect to the ${{\mathcal H}_\infty}$ norm. But again the 
eventual system is usually not a local minimizer of the objective 
$\| H - H( \: \cdot \: ; S^{\red}) \|_{{\mathcal H}_\infty}$ over systems $S^{\red}$ of order $\sr$. 				 

On a related note, our recent work \cite{AliBMV2020} concerns the minimization of the
${\mathcal H}_\infty$ norm of a descriptor system with large order dependent on parameters.
At the center of that work is a subspace framework to cope with the large order of the system.
It may seem plausible to look at the current work from that perspective. However, we have too
many optimization parameters here. As a result applying the framework over there to attain quick
convergence in the setting here is not feasible, as doing so yields projection subspaces growing rapidly 
(i.e., see Algorithm 2 in \cite{AliBMV2020} to attain superlinear convergence). In the
framework here, only $4m$ new directions, independent of $\sr$, are added into the subspaces
at every iteration. Moreover, we observe quick convergence, so the subspaces remain small throughout.

\subsection{Outline}
We first consider the direct minimization of $\| H - H( \: \cdot \: ; S^{\red}) \|_{{\mathcal L}_\infty}$ 
over systems $S^{\red}$ of order $\sr$ by means of smooth optimization techniques 
in Section \ref{sec:direct_optimize}.
In this section, we indicate the optimization variables, and spell out expressions for the first derivatives
of the objective with respect to these variables. As we shall see, the direct optimization is 
too costly even for systems with moderate order, since smooth optimization techniques converge
very slowly and require the evaluation of the ${\mathcal L}_\infty$ objective
too many times. Consequently, in Section \ref{sec:sub_fr}, we replace the transfer
function $H$ with an approximating transfer function $\widetilde{H}$ of small order greater than $\sr$
that Hermite interpolates $H$ at several points on the imaginary axis. Then we
minimize $\| \widetilde{H} - H( \: \cdot \: ; S^{\red}) \|_{{\mathcal L}_\infty}$ (by smooth optimization
techniques), and 
refine $\widetilde{H}$ so that Hermite interpolation with $H$ at another point on the imaginary axis
is attained based on the computed minimizer of $\| \widetilde{H} - H( \: \cdot \: ; S^{\red}) \|_{{\mathcal L}_\infty}$.
We introduce a refinement step on $\widetilde{H}$ so that interpolation
properties can be attained between the full objective 
$\| H - H( \: \cdot \: ; S^{\red}) \|_{{\mathcal L}_\infty}$ and
the reduced objective $\| \widetilde{H} - H( \: \cdot \: ; S^{\red}) \|_{{\mathcal L}_\infty}$.
Then the procedure is repeated with the refined $\widetilde{H}$. In Section \ref{sec:int_prop},
we investigate the interpolation properties between full objective and the reduced objective.
Based on these interpolation properties, we argue in Section \ref{sec:quad_conv}
that the algorithm converges at a quadratic rate under smoothness and nondegeneracy
assumptions. If the original descriptor system is asymptotically stable, it may be natural
to minimize 
$\| \widetilde{H} - H( \: \cdot \: ; S^{\red}) \|_{{\mathcal L}_\infty}$
subject to the asymptotic stability constraints on the reduced system $S^\red$.
We discuss in Section \ref{sec:imp_stab} the incorporation of such asymptotic
stability constraints on the reduced system into our approach. Section \ref{sec:prac_issues}
is devoted to the details that need to be taken into account in a practical implementation
of the proposed algorithm such as the initialization of the smooth optimization routines,
and termination. A Matlab implementation of the algorithm is publicly available.
In Section \ref{sec:numerical_exp}, we report numerical results obtained with
this implementation. The numerical results indicate quick convergence to a locally
optimal solution, and the capability to deal with systems with large order on the 
order of ten thousands.


\section{Use of First and Second Order Derivative-Based Methods}\label{sec:direct_optimize}
First order methods such as the gradient descent algorithm, and second order methods such as quasi-Newton
algorithms equipped with proper line-searches have been successfully applied to nonsmooth optimization problems
in recent years. Here, if a curvature condition is employed in the line-search, this should take into consideration
the fact that the directional derivatives do not have to converge to zero unlike the situation for smooth optimization
problems, e.g., if Wolfe conditions are imposed in the line-search, weak Wolfe conditions should be used rather
than strong Wolfe conditions. Also, for termination small gradient norms should not be required. Instead, for instance,
a failure in sufficient decrease in the objective along the descent search direction may indicate convergence to a locally 
optimal solution.  

\smallskip

The objective to be minimized in (\ref{eq:objective}) for the ${\mathcal L}_\infty$-norm model reduction 
problem can be expressed as
\begin{equation}\label{eq:objective2}
	\begin{split}
	{\mathcal F}(S^{\red})
				\;	=	\;\:
\sup_{\omega \geq 0}	\:		\sigma_{\max}	\left(	H({\rm i} \omega)	-	H({\rm i} \omega; S^{\red}) 		\right)
			\hskip 32.7ex		\\				
				\;	=	\;\:
\sup_{\omega \geq 0}	\:		\sigma_{\max}	\left( {\mathcal H}({\rm i} \omega ; S^{\red})  \right)	\;	
				\;	=	\;\;
	\| {\mathcal H}(\: \cdot \: ; S^{\red})	\|_{{\mathcal L}_\infty}	\; ,		\hskip 19ex		\\
	{\mathcal H}( s ; S^{\red})
				:=	
			[
					C	\;\;	-C^{\red}
			]
			\left[
				\begin{array}{cc}
					s E  -  A		&		0		\\
					0			&		s E^{\red}  -  A^{\red}		
				\end{array}
			\right]^{-1}
			\left[
				\begin{array}{l}
					B	\\
					B^{\red}
				\end{array}
			\right]		
			 +  (D - D^{\red}) ,	
	\end{split}
\end{equation}
\normalsize
where $H(\: \cdot \: ; S^{\red})$ is as in (\ref{eq:red_trans}).
Assuming that the reduced system is at most index one and has semi-simple poles, 
by the Kronecker canonical form, there exist invertible $\sr \times \sr$ real matrices 
$W$, $V$ such that $WE^{\rm red}V$ is diagonal, and $WA^{\rm red}V$ is block diagonal 
with $2\times 2$ and $1\times 1$ blocks along the diagonal. Consequently,
the reduced system is equivalent to a system (with the same transfer function)
for which $A^{\red}$, $E^{\red}$ are converted into tridiagonal and diagonal forms, respectively.
Hence, under index one and semi-simple pole assumptions, we can perform the minimization
over tridiagonal $A^{\red}$ and diagonal $E^{\red}$.
Recalling the dimensions of $A^{\red}, B^{\red}, C^{\red}, D^{\red}, E^{\red}$, 
there are precisely $4\sr - 2 + \sr m + p \sr + pm$ optimization variables. 

The gradient descent algorithm, as well as quasi-Newton algorithms to minimize
${\mathcal F}$ require the gradients of ${\mathcal F}$. To this end, suppose there is a unique $\omega_\ast \geq 0$
satisfying 
\begin{equation*}
		\sigma_{\max}	\left(	H({\rm i} \omega_\ast)	-	H({\rm i} \omega_\ast ; S^{\red} )	\right)
						\;\;	=	\;\;
	{\mathcal F}(S^{\red})		
							\;\;	=	\;\;
		\sup_{\omega \geq 0}	\:		
		\sigma_{\max}	\left(	H({\rm i} \omega)	-	H({\rm i} \omega ; S^{\red})	\right)	\;\;	,
\end{equation*}
ensuring that ${\mathcal F}$ is differentiable at $S^{\red}$.
Additionally, let $u$, $v$ denote a consistent pair of unit left, right singular vectors corresponding to
$\sigma_{\max}	\left(	H({\rm i} \omega_\ast) \right.$	$-$	$\left. H({\rm i} \omega_\ast ; S^{\red} ) \right)$,
and let us introduce
$ \:
	\widetilde{u} := u^\ast C^{\red} ({\rm i} \omega_\ast E^{\red} -  A^{\red} )^{-1},	\;\;
	\widetilde{v} := ({\rm i} \omega_\ast E^{\red} -  A^{\red} )^{-1} B^{\red} v.
$
Then, by employing the analytical formulas for the derivatives of singular value functions 
\cite{Lancaster1964}, \cite[Section 3.3]{MenYK2014}, the gradients of ${\mathcal F}$ are given by
\begin{equation}\label{eq:grad_system}
	\begin{split}
	&	\nabla_{A^{\red}} {\mathcal F}(S^{\rm \red})
		\,	=	\,	-{\rm diag} ( \Re ( \widetilde{u}^T \odot \widetilde{v} ) )	-	{\rm diag}_{-1}	( \Re ( \widetilde{u}(2:\sr)^T \odot \widetilde{v}(1:\sr-1) ) )		\\[.3em]
		&	\hskip 35ex				-	{\rm diag}_{+1}	( \Re ( \widetilde{u}(1:\sr-1)^T \odot \widetilde{v}(2:\sr) ) ) \: ,		\\[.5em]
	&	\nabla_{E^{\rm \red}} {\mathcal F}(S^{\rm \red})
		\,	=	\, 	-\omega_\ast \cdot {\rm diag} ( \Im (  \widetilde{u}^T \odot \widetilde{v} ) ) \: ,	
		\;\;\;\;
		\nabla_{B^{\rm \red}} {\mathcal F}(S^{\rm \red})
		\,	=	\, 	- \Re ( \widetilde{u}^T \, v^T ) \: ,	\\[.5em]
	&	\nabla_{C^{\rm \red}} {\mathcal F}(S^{\rm \red})
		\,	=	\, 	- \Re ( \overline{u} \:\, \widetilde{v}^T ) \: ,		\;\;\;\;
		\nabla_{D^{\rm \red}} {\mathcal F}(S^{\rm \red})
		\,	=	\, 	- \Re( \overline{u} \:\, v^T ) \: ,
	\end{split}
\end{equation}
where $\odot$ denotes the Hadamard product, $\overline{u}$ denotes the complex conjugate of $u$, and
the notation ${\rm diag}(w)$ represents the square diagonal matrix whose diagonal entries are formed of the 
entries of the vector $w$. The notations ${\rm diag}_{-1}(w)$ and ${\rm diag}_{+1}(w)$ are similar to ${\rm diag}(w)$
but with the difference that the subdiagonal and superdiagonal entries of the matrix are filled with the entries of $w$
rather than the diagonal entries.

It is essential that a quasi-Newton method such as BFGS generates approximate Hessians that are positive definite.
This is traditionally imposed by the line-searches. For instance, if BFGS is to be used to minimize ${\mathcal F}$,
then a line-search ensuring the satisfaction of the weak Wolfe conditions may be adopted so that the approximate Hessians remain 
positive definite. On the other hand, for the gradient descent algorithm to minimize ${\mathcal F}$ it is sufficient to 
adopt a simpler line-search that guarantees only sufficient reduction in the objective, e.g., an Armijo backtracking line-search.

One difficulty with using methods such as gradient descent and BFGS to minimize ${\mathcal F}$
is that these algorithms converge rather slowly only at a linear rate at best. This may sound surprising
especially for BFGS, which typically converges superlinearly for smooth problems. Slower convergence
for BFGS is an artifact of nonsmoothness. As a result of linear convergence at best, the objective ${\mathcal F}$
typically needs to be evaluated many times until reaching a prescribed accuracy. This may be prohibitively expensive,
as it is apparent from (\ref{eq:objective2}) that evaluation of ${\mathcal F}$ involves the computation of the 
${\mathcal L}_\infty$ norm of ${\mathcal H}(\: \cdot \: ; S^{\red})$, the transfer function for a large-scale 
system assuming the original system in (\ref{eq:state_space}) is large-scale.

To illustrate the slow convergence issues in the previous paragraph, and the computational difficulties 
that come with it, we apply the gradient descent algorithm to the \texttt{iss} example from the
SLICOT collection. The system associated with this example has order $n = 270$,  and $m = p = 3$. 
We attempt to solve the ${\mathcal L}_\infty$ model reduction problem for $\sr = 12$ starting with the initial
reduced order model generated by the balanced truncation approach. 
 The errors (${\mathcal F}$) and the 2-norms of the gradients of the errors ($\| \nabla {\mathcal F} \|_2$) of the iterates 
 of the gradient descent algorithm are reported in Table \ref{table:direct_gd_converge}. It takes 37 iterations until
 the errors in two consecutive iterations differ by no more than $10^{-6}$ in a relative sense. 
  The initial ${\mathcal L}_\infty$-norm error 0.004470060020 (of the system obtained from the balanced truncation) 
 is reduced to 0.002415438945 after 37 iterations. The eventual reduced model obtained
 appears to be a local minimizer of ${\mathcal F}$ up to prescribed tolerances, as can be observed from the 
 plots in Figure \ref{fig:direct_gd_converge}. Note however that according to the last columns in Table \ref{table:direct_gd_converge} 
 the gradients of ${\mathcal F}$ do not seem to be converging to zero, which indicates that the objective is not
 differentiable at the local minimizer to be converged.
 Meanwhile, the objective ${\mathcal F}$ is evaluated 624 times, since the line-search at each iteration
 requires several objective function evaluations (i.e., to be precise 8-28 evaluations per iteration) until the satisfaction 
 of the sufficient decrease condition. This results in a total runtime of about 500 seconds, costly for a system of relatively 
 small order. To conclude, direct applications of the gradient descent and quasi-Newton algorithms do not seem viable 
 for systems of even modest order (e.g., a few thousands).

\begin{table} 
\centering
\caption{ This concerns the ${\mathcal L}_\infty$ model reduction of the \texttt{iss} example with $\sr = 12$. The objective
${\mathcal F}^{(k)} :=$  
${\mathcal F}( A^{(k)}, B^{(k)}, C^{(k)}, D^{(k)}, E^{(k)})$, and the 2-norm
of $\, \nabla {\mathcal F}^{(k)} := \nabla {\mathcal F}( A^{(k)}, B^{(k)}, C^{(k)}, D^{(k)}, E^{(k)})$ 
for the iterate $( A^{(k)}, B^{(k)}, C^{(k)}, D^{(k)}, E^{(k)})$ by the gradient descent method
at the $k$th iteration are listed.}
\label{table:direct_gd_converge}
\small
\begin{tabular}{cc}
	\hskip 3ex
	\begin{tabular}{c|cc}
		\hskip 1ex $k$ \hskip 2ex  		& 		 ${\mathcal F}^{(k)}$     		&	 		$\|  \nabla {\mathcal F}^{(k)}  \|_2$  \\    
		\hline
		0		&		0.004470060020  		&		1.000093488			\\
		1 		&		0.004346739384 		&	 	0.833556647			\\
		2 		&		0.003609940202 		&		1.000097230			\\
		3 		&		0.003175718111 		&		0.769359926			\\
		4 		&		0.002975716755  		&		1.000095596			\\
		5 		&		0.002946113130 		&		0.999918608			\\
		6 		&		0.002697635041 		&		0.844275929			\\
		7 		&		0.002656707905 		&		0.999952423
	\end{tabular}	
				\hskip 2ex
				&
				\hskip 2ex
	\begin{tabular}{c|cc}
		\hskip 1ex $k$ \hskip 2ex  		& 		 ${\mathcal F}^{(k)}$     		&	 		$\|   \nabla {\mathcal F}^{(k)}  \|_2$  \\    
		\hline
		30		&		0.002415516341 		&		0.803721909			\\
		31 		&		0.002415479783		&	 	1.000008471			\\
		32 		&		0.002415475189 		&		0.803718441			\\
		33 		&		0.002415456030 		&		1.000008467			\\
		34 		&		0.002415454613 		&		0.803716708			\\
		35 		&		0.002415444154 		&		1.000008465			\\
		36 		&		0.002415439844 		&		0.803714645			\\
		37 		&		0.002415438945 		&		1.000008462
	\end{tabular}
\end{tabular}
\end{table}

\begin{figure} 
 \centering
		\begin{tabular}{cc}
			 \subfigure[$(1,1)$, $A^{\red}$ \label{fig:1a}]{\includegraphics[width = .47\textwidth]{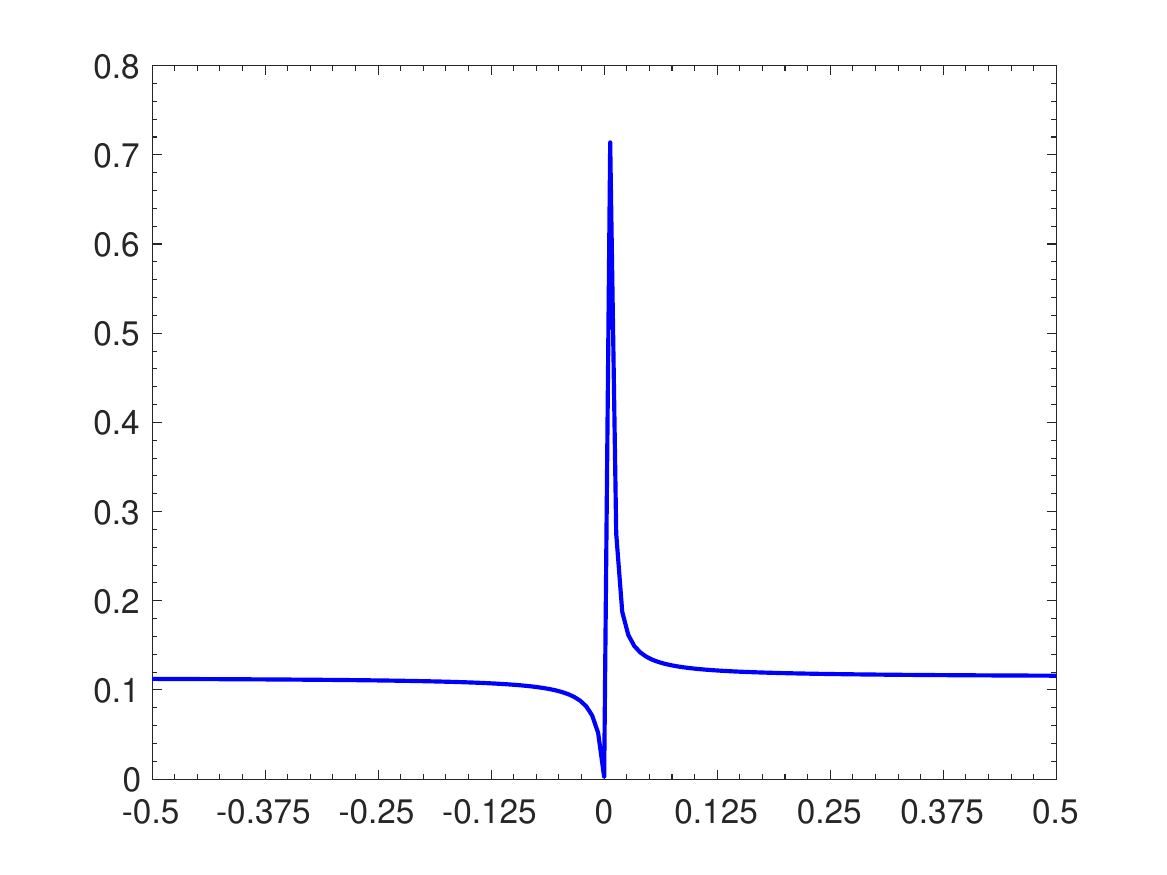}} & 		 
			 \subfigure[$(11,10)$, $A^{\red}$ \label{fig:1b}]{\includegraphics[width = .47\textwidth]{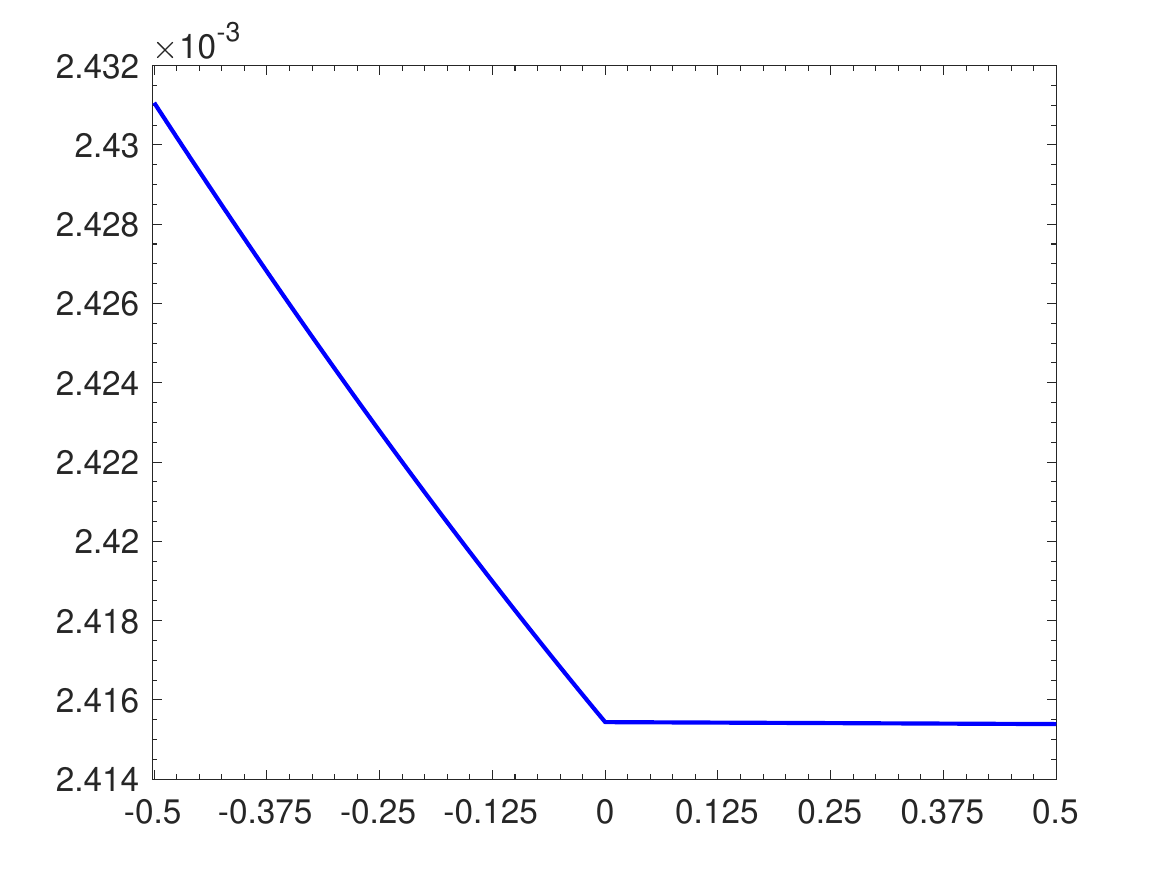}}
			 \\  
			 \subfigure[$(6,7)$, $A^{\red}$ \label{fig:1c}]{\includegraphics[width = .47\textwidth]{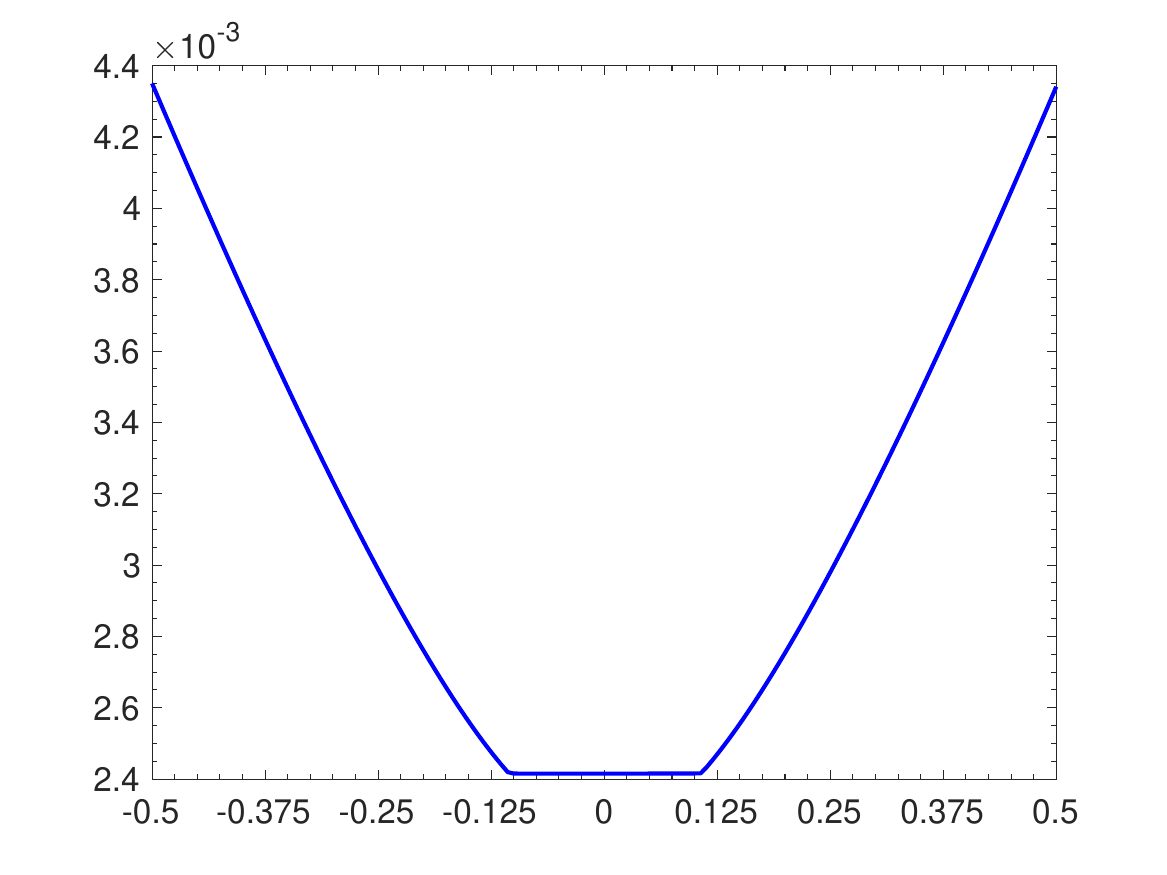}} &			 
			 \subfigure[$(1,1)$, $E^{\red}$ \label{fig:1d}]{\includegraphics[width = .47\textwidth]{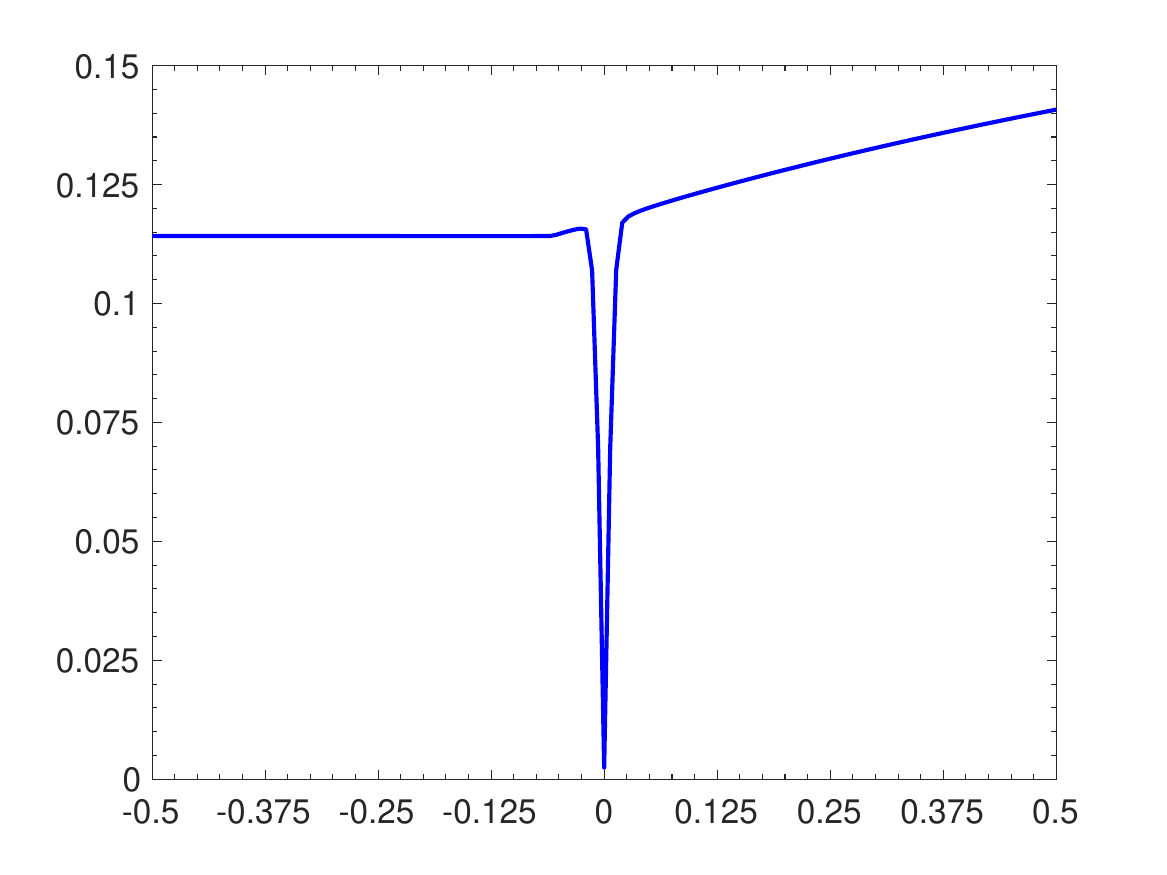}} 
			 \\
			 \subfigure[$(5,5)$, $E^{\red}$ \label{fig:1e}]{\includegraphics[width = .47\textwidth]{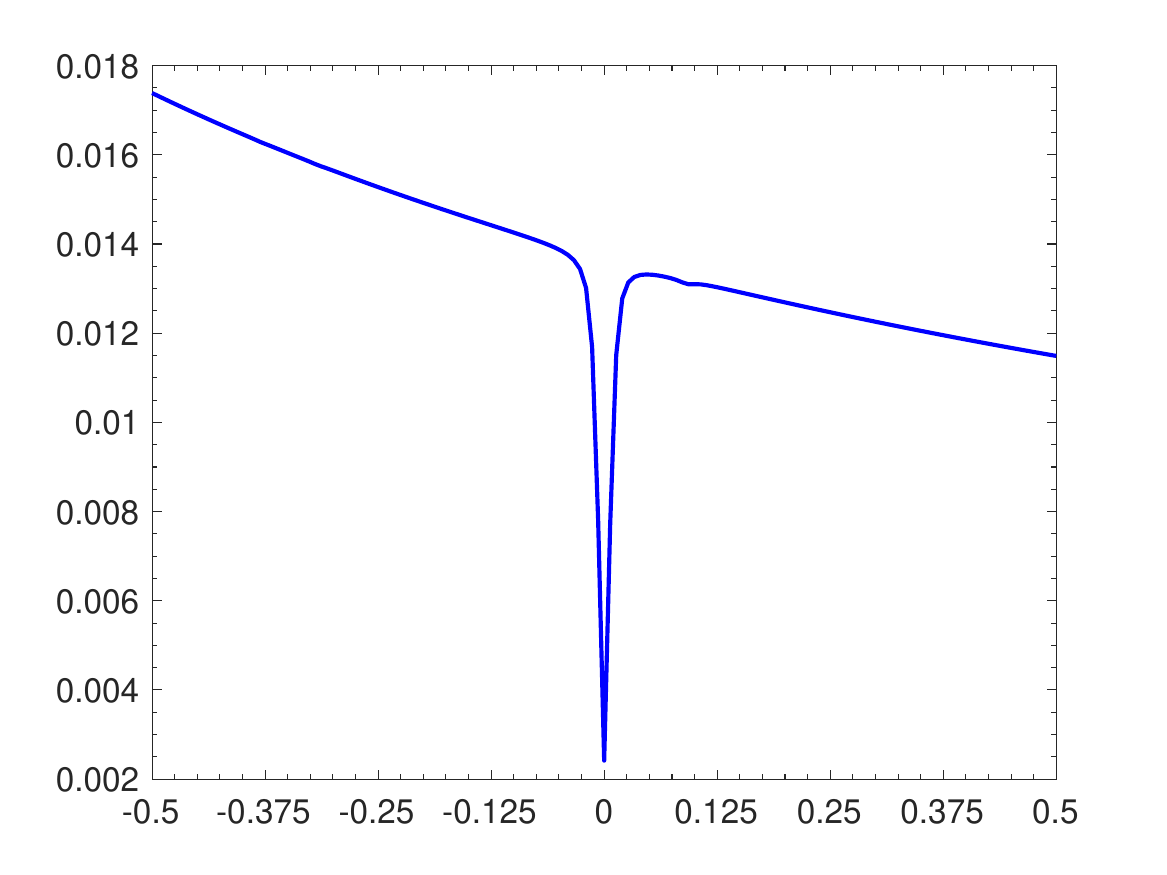}} &			 
			 \subfigure[$(8,3)$, $B^{\red}$ \label{fig:1f}]{\includegraphics[width = .47\textwidth]{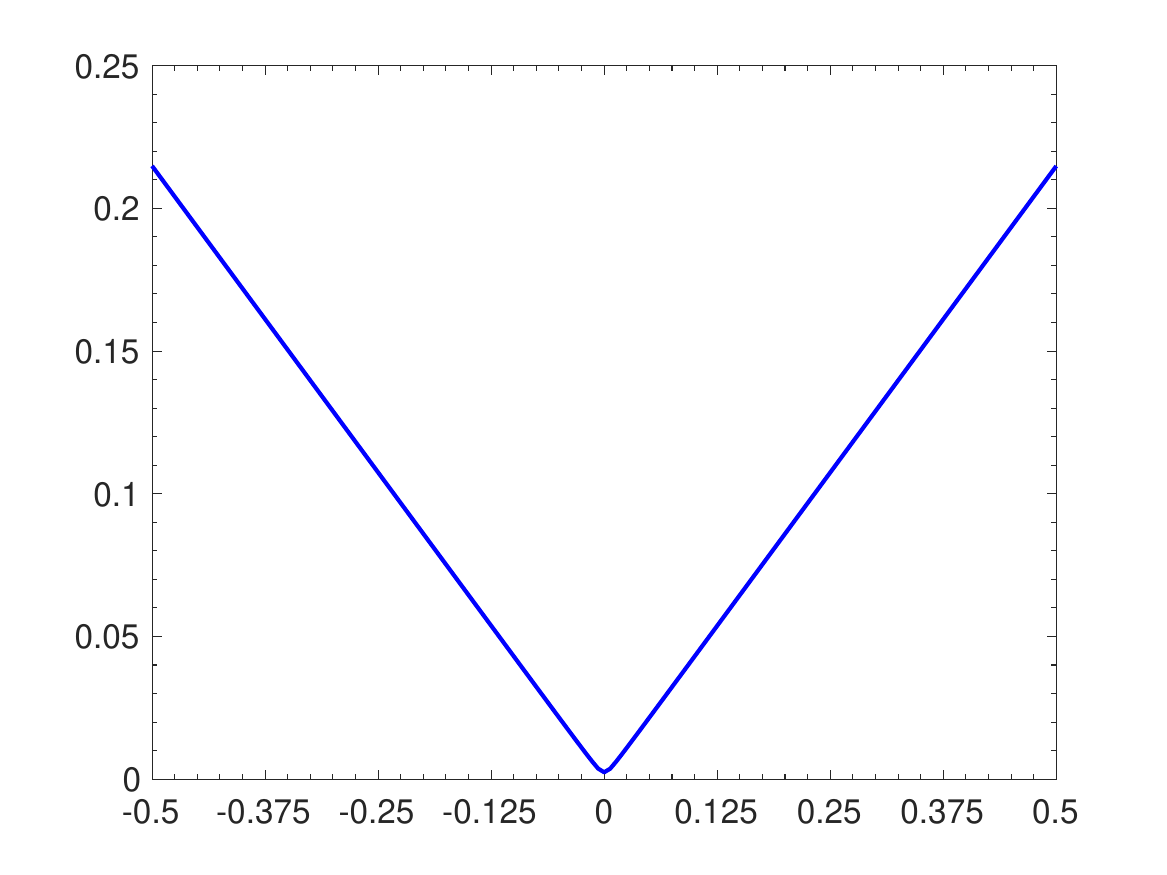}} 
			 \\
			 \subfigure[$(1,5)$, $C^{\red}$ \label{fig:1g}]{\includegraphics[width = .47\textwidth]{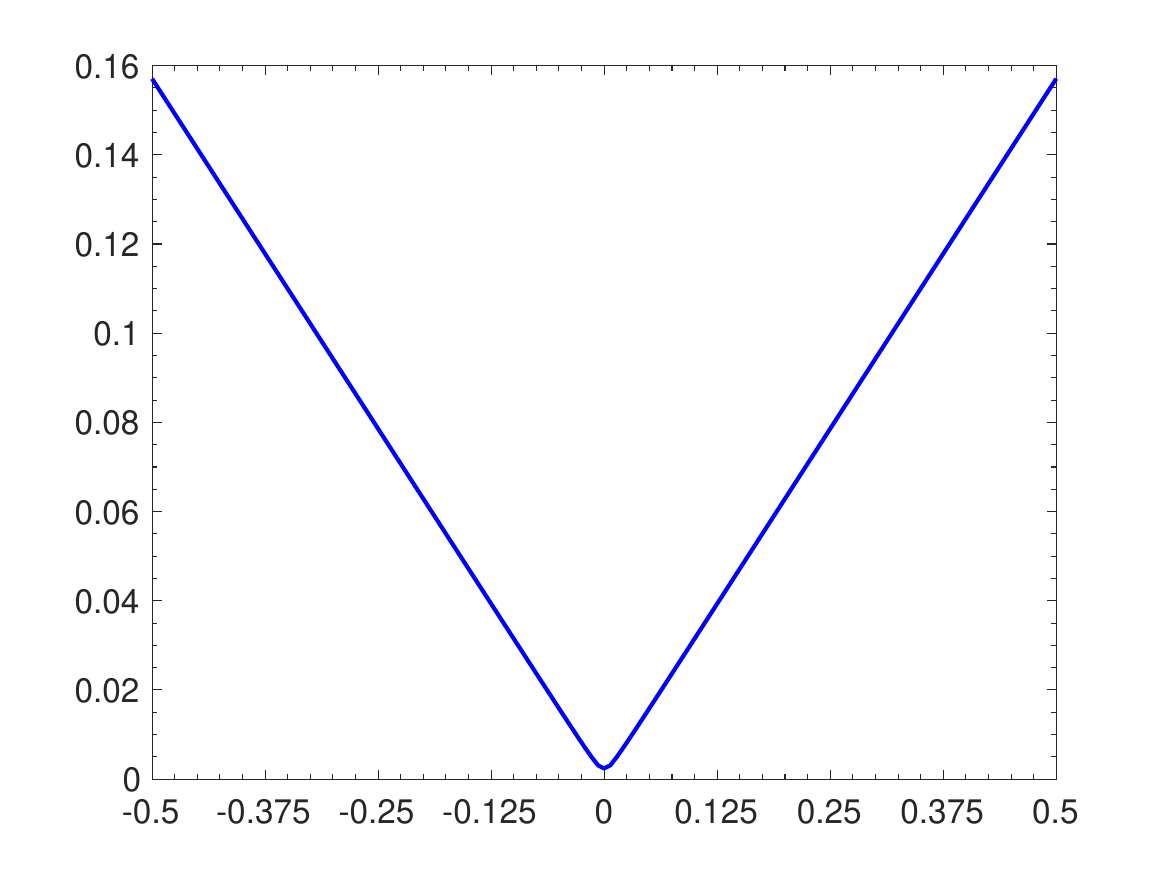}} &
			 \subfigure[$(3,3)$, $D^{\red}$ \label{fig:1h}]{\includegraphics[width = .47\textwidth]{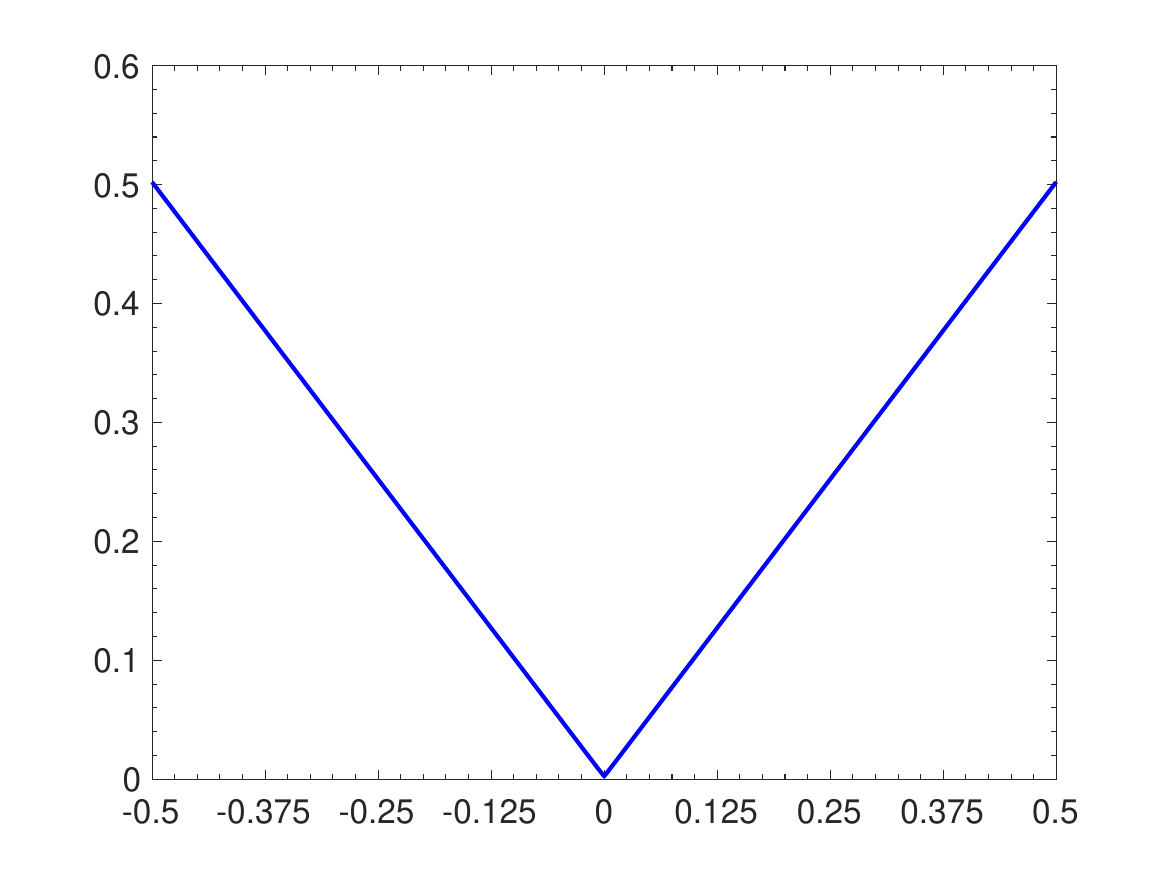}}		
		\end{tabular}   
		\caption{  
		The locally minimal reduced system generated by the gradient descent method for the \texttt{iss}
		example and $\sr = 12$ is varied, and the error ${\mathcal F}$ is plotted as a function of the variation. 
		In each one of the plots (a)$-$(h), only the indicated entry of one of the optimal coefficients 
		$A^{\red}, B^{\red}, C^{\red}, D^{\red}, E^{\red}$
		is varied by amounts in $[-0.5, 0.5]$. Zero variation corresponds to the optimal reduced system.}
		\label{fig:direct_gd_converge}
\end{figure}

\section{A Subspace Framework}\label{sec:sub_fr}
The computational difficulty in minimizing the objective ${\mathcal F}$ in (\ref{eq:objective2}) is due to the
large order of the original system $S = (A, E, B, C, D)$. In this section, we propose to replace this system with 
a system of smaller order $S_r = (A_r, E_r, B_r, C_r, D_r)$ with the state-space representation
\begin{equation}\label{eq:small_system}
		E_r x'_r(t)	\;	 =	\;	A_r x_r(t)		+	B_r u(t),	\quad\quad
		y(t)	\;	 =	\;	C_r x_r(t)	+	D u(t),
\end{equation}
and solve the resulting ${\mathcal L}_\infty$ model reduction problem, that is minimize
\begin{equation}\label{eq:reduced_objective}
	 {\mathcal F}_r(S^{\red})
				\;	=	\;
		\sup_{\omega \geq 0}	\:		
	\sigma_{\max}	\left(	H_r({\rm i} \omega)	-	H({\rm i} \omega ; S^{\red}) 		\right)	
				\;	=	\;
		\sup_{\omega \geq 0}	\:		
	\sigma_{\max}	\left(	{\mathcal H}_r({\rm i} \omega  	;  S^{\red} )	\right) 	\; ,	
\end{equation}
where
\begin{equation*}
	\begin{split}
	& H_r(s)	\;	:=	\;	C_r(s E_r  -  A_r)^{-1} B_r		+	D	\;	,	\;\;\;	\text{and}  \\
	& {\mathcal H}_r(s; S^{\red})			
			:=
			[
					C_r	\;\;	-C^{\red}
			]
			\left[
				\begin{array}{cc}
					s E_r  -  A_r		&		0		\\
					0		&		s E^{\red}  -  A^{\red}		
				\end{array}
			\right]^{-1}
			\left[
				\begin{array}{l}
					B_r	\\
					B^{\red}
				\end{array}
			\right]
					+
			(D - D^{\red})	\;  .		
	\end{split}
\end{equation*}
The question that we need to address is
how do we form a small system $S_r = (A_r, E_r, B_r, C_r, D)$ that is a good representative of the original
system near a local minimizer of the original ${\mathcal L}_\infty$ model reduction problem.

Recall how pure Newton's method operates to minimize a function $f : {\mathbb R}^q \rightarrow {\mathbb R}$. 
It approximates $f$ with a quadratic model, and finds a local minimizer $\widetilde{x}$ of the quadratic model. 
Then, assuming $f$ is twice differentiable at $\widetilde{x}$, 
it refines the  quadratic model so that the refined quadratic model $q$ satisfies  $f(\widetilde{x}) = q(\widetilde{x})$,  
$\nabla f(\widetilde{x}) =  \nabla q(\widetilde{x})$ and $\nabla^2 f(\widetilde{x}) = \nabla^2 q(\widetilde{x})$. 
In the context of ${\mathcal L}_\infty$ model reduction, we view ${\mathcal F}_r$ as the model function for ${\mathcal F}$,
even though ${\mathcal F}_r$ is not quadratic. We minimize ${\mathcal F}_r$ locally rather than ${\mathcal F}$,
and refine the small system in (\ref{eq:small_system}) with the hope that the objective error function ${\mathcal F}_{r+1}$
of the refined system interpolates ${\mathcal F}$ and its first two derivatives at the computed minimizer of ${\mathcal F}_r$.

The small system in (\ref{eq:small_system}) is obtained from the original system by applying the Petrov-Galerkin
framework; for given two subspaces ${\mathcal V}_r$, ${\mathcal W}_r$ of ${\mathbb R}^n$ of equal dimension, the state 
space of the original system is restricted to ${\mathcal V}_r$ and the differential part of the resulting system
is imposed to be orthogonal to ${\mathcal W}_r$. Formally, denoting with $V_r, W_r$ matrices whose
columns form orthonormal bases for 
 ${\mathcal V}_r$, ${\mathcal W}_r$ and with $x_r(t)$ the restricted state, the original system is approximated by
 \[
 	W_r^T ( E V_r x'_r(t)	\;	 -	\;	A  V_r x_r (t)		-	B u(t) ) \; = \; 0,	\quad\quad
		y(t)	\;	 =	\;	C V_r x_r (t)	+	D u(t),
 \]
 giving rise to a system of the form (\ref{eq:small_system}) with
 \begin{equation}\label{eq:small_coeffs}
 	E_r \: = \: W_r^T E V_r	,		\quad		A_r  \: = \:  W_r^T A V_r,	\quad		B_r  \: = \:  W_r^T B,		\;\;	\text{and} 	
				\;\;		C_r  \: = \: C V_r.
 \end{equation}
 For the realization of the ideas in the previous paragraph, we need to be equipped with a tool that gives us
 the capability to interpolate $H(s)$ and its derivatives at a prescribed point in the complex plane with 
 those of the transfer function for the small system. This tool is introduced in the next result, which
 follows from \cite[Theorem 1]{BeaG2009}.
 \begin{theorem}\label{thm:interpolate}
 Let $\mu \in {\mathbb C}$ be such that $A - \mu E$ is invertible.  
 Suppose
 \begin{equation*}
 \begin{split}
	& 	  \bigoplus_{j=0}^\kappa   \Re \left[ \{ (A - \mu E)^{-1} E \}^j (A - \mu E)^{-1} B	 \right] 	\; \subseteq \; {\mathcal V}_{r},		\\
	&		\bigoplus_{j=0}^\kappa \Im \left[ \{ (A - \mu E)^{-1} E \}^j (A - \mu E)^{-1} B \right]	\; \subseteq \; {\mathcal V}_{r},		\\
	& 	  \bigoplus_{j=0}^\kappa   \Re  \left[ C  (A - \mu E)^{-1}  \{ E (A - \mu E)^{-1}  \}^j  \right]^\ast 	\; \subseteq \; {\mathcal W}_{r},		\text{ and}		\\
	&		\bigoplus_{j=0}^\kappa   \Im  \left[ C  (A - \mu E)^{-1}  \{ E (A - \mu E)^{-1}  \}^j  \right]^\ast 	\; \subseteq \; {\mathcal W}_{r}.
\end{split}	
 \end{equation*}
 \normalsize
 Then, with $A_r, E_r, B_r, C_r$ defined as in (\ref{eq:small_coeffs}), if $A_r - \mu E_r$ is invertible, we have
 \medskip
 \begin{enumerate}
 	\item[\bf (i)]  $H(\mu) = H_r(\mu) \,$ and $\, H(\overline{\mu}) = H_r(\overline{\mu})$,
	\smallskip
	\item[\bf (ii)]  $H^{(j)}(\mu) = H^{(j)}_r(\mu) \,$ and $\, H^{(j)}(\overline{\mu}) = H^{(j)}_r(\overline{\mu}) \,$
	for $j = 1, \dots , 2\kappa + 1$. 
 \end{enumerate}
 \end{theorem}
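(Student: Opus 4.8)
The plan is to obtain Theorem~\ref{thm:interpolate} as a corollary of the standard rational-interpolation property of one-sided and two-sided Petrov--Galerkin projections over $\mathbb{C}$, namely \cite[Theorem 1]{BeaG2009}, after carefully translating between the real subspace hypotheses stated here and the complex Krylov conditions appearing in that reference. The essential point is that a \emph{real} subspace which stores the real and imaginary parts of a complex Krylov direction automatically handles the whole conjugate pair, so interpolation is forced simultaneously at $\mu$ and at $\overline{\mu}$.

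First I would set up the complex picture. Viewing the real orthonormal matrices $V_r$ and $W_r$ as complex matrices, the coefficients in (\ref{eq:small_coeffs}) coincide with the genuinely complex Petrov--Galerkin reduced coefficients $W_r^{\ast} E V_r$, $W_r^{\ast} A V_r$, $W_r^{\ast} B$, $C V_r$, because $W_r^{\ast} = W_r^{T}$ for a real matrix; hence $H_r$ is literally the complex projected transfer function associated with the (complex) column spaces of $V_r$ and $W_r$. The classical result then reads: if $A - \mu E$ is invertible,
\[ \bigoplus_{j=0}^{\kappa} \Range\!\left( \{ (A-\mu E)^{-1} E \}^{j} (A-\mu E)^{-1} B \right) \;\subseteq\; {\mathcal V}_r + {\rm i}\,{\mathcal V}_r , \]
the analogous inclusion holds for the conjugate-transposed left blocks $\left[ C (A-\mu E)^{-1} \{ E (A-\mu E)^{-1} \}^{j} \right]^{\ast}$ inside ${\mathcal W}_r + {\rm i}\,{\mathcal W}_r$, and $A_r - \mu E_r$ is invertible, then $H^{(j)}(\mu) = H_r^{(j)}(\mu)$ for $j = 0, \dots, 2\kappa+1$.

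Second I would discharge these complex hypotheses from the stated ones. Since $A$, $E$, $B$, $C$ are real, complex conjugation carries the block $K_j(\mu) := \{ (A-\mu E)^{-1} E \}^{j} (A-\mu E)^{-1} B$ to $K_j(\overline{\mu})$, so that the columns of $K_j(\overline{\mu})$ are, up to a sign in the imaginary part, built from the same real vectors $\Re K_j(\mu)$ and $\Im K_j(\mu)$. Consequently, the requirement that $\Re K_j(\mu)$ and $\Im K_j(\mu)$ have columns in ${\mathcal V}_r$ for $j = 0, \dots, \kappa$ is exactly the requirement that the columns of \emph{both} $K_j(\mu)$ and $K_j(\overline{\mu})$ lie in ${\mathcal V}_r + {\rm i}\,{\mathcal V}_r$; the same reasoning applies on the left with ${\mathcal W}_r$. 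Moreover $A - \overline{\mu} E = \overline{A - \mu E}$ and $A_r - \overline{\mu} E_r = \overline{A_r - \mu E_r}$ are invertible precisely when $A - \mu E$ and $A_r - \mu E_r$ are.

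Finally I would apply the complex result twice, once with interpolation point $\mu$ and once with $\overline{\mu}$, in both cases using $V_r$ and $W_r$ as the projection bases, to obtain $H^{(j)}(\mu) = H_r^{(j)}(\mu)$ and $H^{(j)}(\overline{\mu}) = H_r^{(j)}(\overline{\mu})$ for $j = 0, \dots, 2\kappa+1$; reading off $j = 0$ yields part~(i) and $j \geq 1$ yields part~(ii). The only step that demands genuine care -- and the one I would write out in full -- is the real/complex translation in the third paragraph: one must check that a single real subspace recording real and imaginary parts encodes both conjugate families of complex Krylov directions, and that nothing is lost in identifying (\ref{eq:small_coeffs}) with the complex Petrov--Galerkin reduction. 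Everything else is a direct citation of \cite[Theorem 1]{BeaG2009} (or, for the unstructured descriptor case, the classical rational Krylov interpolation theorem).
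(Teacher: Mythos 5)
Your proposal is correct and follows the same route as the paper, which states that the theorem ``follows from \cite[Theorem 1]{BeaG2009}'' and gives no further argument. You simply make explicit the real/complex bookkeeping (that a real subspace containing $\Re$ and $\Im$ of the Krylov directions complexifies to contain the directions at both $\mu$ and $\overline{\mu}$, and that $W_r^{\ast}=W_r^T$ for real $W_r$) that the paper leaves implicit.
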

 
 \medskip
 
Our proposed subspace framework at iteration $r$ first finds a minimizer of ${\mathcal F}_r(S^{\red})$, say 
$S^{\red}_r = (A^\red_r, B^\red_r, C^\red_r, D^\red_r, E^\red_r)$.  This is followed by the computation
of an $\omega_r \in {\mathbb R}, \, \omega_r \geq 0$ such that
 \[
 	{\mathcal F}(S^\red_r)
				\;	=	\;
	\sup_{\omega \geq 0} \,   \sigma_{\max}	\left(	H({\rm i} \omega)	-	H({\rm i} \omega ; S^{\red}_r)  \right)
				\;	=	\;
		\sigma_{\max}	\left(	H({\rm i} \omega_r)	-	H({\rm i} \omega_r ; S^{\red}_r) 		\right) \: .
 \]
Computing such an $\omega_r$ requires the large-scale ${\mathcal L}_\infty$-norm computation in (\ref{eq:objective2})
but by replacing $S^{\red} = (A^{\red}, B^{\red}, C^{\red},$ $D^{\red}, E^{\red})$ with 
$S^{\red}_r = (A^\red_r, B^\red_r, C^\red_r, D^\red_r, E^\red_r)$.
Then subspaces are expanded so that $H$ and its first three derivatives are interpolated at ${\rm i} \omega_r$
by those of the transfer function for the small system. A formal description of the framework is given 
in Algorithm \ref{alg:SM} below. As the subspaces ${\mathcal V}_r$ and ${\mathcal W}_r$ are
required to be of equal dimension, the description assumes that the number of inputs
and the outputs are equal, i.e., $m = p$. Even if it is omitted here for simplicity, it is straightforward 
to modify the directions $\widetilde{V}_{r+1}$, $\widetilde{W}_{r+1}$ in lines \ref{V_dir}-\ref{W_dir} to be added 
to the subspaces ${\mathcal V}_r$, ${\mathcal W}_r$ in order to deal with the systems for which $m \neq p$. 
The final refinement step in line \ref{alg:refine} aims at the satisfaction
of the interpolation condition ${\mathcal F}(S^{\red}_r) = {\mathcal F}_{r+1}(S^{\red}_r)$,
as well as the interpolation conditions on the derivatives of ${\mathcal F}(S^{\red})$ and 
${\mathcal F}_{r+1}(S^{\red})$ at $S^{\red}_r$. This step is elaborated on in the next subsection.

\begin{algorithm}[tb]
 \begin{algorithmic}[1]
 
\REQUIRE{System $S = (A, E, B, C, D)$ as in \eqref{eq:state_space}, 
	the order $\sr \in {\mathbb Z}^+$ of the reduced system sought,
	and an initial estimate $S^{\red}_0 = (A^{\red}_0, E^{\red}_0, B^{\red}_0, C^{\red}_0, D^{\red}_0)$
	of order $\sr$ for a minimizer of ${\mathcal F}$ as in \eqref{eq:objective2}.}
\ENSURE{Estimate $S^{\red}_\star = (A^\red_\star, E^{\red}_\star, B^\red_\star, C^\red_\star, D^\red_\star)$ 
for a minimizer of ${\mathcal F}$ as in \eqref{eq:objective2}.}

\vskip 1.5ex

\STATE Choose the initial subspaces ${\mathcal V}_0, {\mathcal W}_0$ and orthonormal bases $V_0, W_0$ for them. \label{init_subspaces}

\vskip 1.5ex

\STATE Form $A_0, B_0, C_0, E_0$ using (\ref{eq:small_coeffs}),$\;$
			and let $\: S_{0} = (A_{0}, E_{0}, B_{0}, C_{0}, D) \,$.	

\vskip 1.5ex

\vskip .7ex
\textcolor{mygreen}{\textbf{$\%$ main loop}}
\FOR{$r = 0,\,1,\,\dots$}
	
	\vskip 1.5ex


	 \IF{$r \geq 1$}
	
	\vskip .4ex
	
	  \STATE $S^{\red}_r$ $\; \gets \;$ 
	  			a minimizer of ${\mathcal F}_r(S^{\red})$. 	\label{exp_start}	
				
	\vskip .4ex	
		
	\ENDIF				
				
	\vskip 1.5ex			
				
	\STATE $\omega_r$ 		$\; \gets \;$	a maximizer of 		
									$\sigma( \omega; S^{\red}_r) = \sigma_{\max}	\left(	H({\rm i} \omega)	-	H({\rm i} \omega ; S^{\red}_r ) \right)$
	over $\omega \geq 0$.   \label{large_linfinity}
	
	\vskip 1.5ex
	
	\IF{$r \geq 1$}
	
	\vskip .4ex
	
		\STATE \textbf{Return}  if convergence has occurred with 
		$\: S^\red_\star \: \gets \: S^\red_r$.  \label{return_spec}
		
	\vskip .4ex	
		
	\ENDIF

	\vskip 1.5ex

	\textcolor{mygreen}{\textbf{$\%$ expand the subspaces to interpolate at ${\rm i} \omega_r$}}  \\[.2em]      	
	\STATE  $\widetilde{V}_{r+1} \gets \left[ \;\; \Re [ ({\rm i} \omega_r E - A)^{-1} B ]  \;\;\;\;  \Re [ ({\rm i} \omega_r E - A)^{-1} E ({\rm i} \omega_r E - A)^{-1} B] \right.$	\\[.2em]
	\phantom{aaaaaaaaaaaaa}			
			$ \left.	\Im [ ({\rm i} \omega_r E - A)^{-1} B ]  \;\;\;\;  \Im [ ({\rm i} \omega_r E - A)^{-1} E ({\rm i} \omega_r E - A)^{-1} B]	\;\;	\right]$. \label{V_dir}
	
	\vskip .4ex
	
  \STATE  $\widetilde{W}_{r+1} \gets \left[ \;\; \Re [ ({\rm i} \omega_r E - A)^{-\ast} C^\ast ]  \;\;\;\;  \Re [ ({\rm i} \omega_r E - A)^{-\ast} E ({\rm i} \omega_r E - A)^{-\ast} C^\ast] \right.$	\\[.2em]
	\phantom{aaaaaaaaaaaaa}			
			$ \left.	\Im [ ({\rm i} \omega_r E - A)^{-\ast} C^\ast ]  \;\;\;\;  \Im [ ({\rm i} \omega_r E - A)^{-\ast} E ({\rm i} \omega_r E - A)^{-\ast} C^\ast]	\;\;	\right]$.  \label{W_dir}
	
	\vskip .3ex
		        
	\STATE $V_{r+1} \gets \operatorname{orth}\left(\begin{bmatrix} V_{r} & \widetilde{V}_{r+1} \end{bmatrix}\right)
			 \text{ and } \: W_{r+1} \gets \operatorname{orth}\left(\begin{bmatrix} W_{r} & \widetilde{W}_{r+1} \end{bmatrix}\right)$. \label{R_orthogonalize}

	\vskip 1.5ex

	\textcolor{mygreen}{\textbf{$\%$ update the small system}}       
	\STATE 	Form $A_{r+1}, B_{r+1}, C_{r+1}, E_{r+1}$ using (\ref{eq:small_coeffs}), \\[.2em]
	\hskip 18ex
	and let $\: S_{r+1} = (A_{r+1}, E_{r+1}, B_{r+1}, C_{r+1}, D) \,$.	\label{form_rp1}

	\vskip 1.5ex

	\textcolor{mygreen}{\textbf{$\%$ refine the small system}}       
	\STATE   Refine $V_{r+1}$, $W_{r+1}$ and $S_{r+1}$ if necessary
	(using Algorithm \ref{alg:refine_step}). \label{alg:refine}

	\vskip 1.5ex

\ENDFOR
 \end{algorithmic}
\caption{Subspace framework for ${\mathcal L}_\infty$ model reduction}
\label{alg:SM}
\end{algorithm}

\subsection{Refinement Step}
First we make a few observations regarding the relation between 
${\mathcal F}(S^{\red}_r)$ and ${\mathcal F}_{r+1}(S^{\red}_r)$
at the $r$th subspace iteration in Algorithm \ref{alg:SM} right before 
the refinement step.

At the $r$th iteration of Algorithm \ref{alg:SM} right after line \ref{form_rp1}, 
by Theorem \ref{thm:interpolate}, we have
\begin{equation}\label{eq:int_props}
	H({\rm i} \omega_r)	=	H_{r+1}({\rm i} \omega_r)	
	\quad	\text{and}		\quad	H^{(j)}({\rm i} \omega_r)	=	H^{(j)}_{r+1}({\rm i} \omega_r)	
\end{equation}
for $j = 1, 2, 3$ under the assumptions that $A - {\rm i} \omega_r E$ and $A_{r+1} - {\rm i} \omega_r E_{r+1}$
are invertible. Consequently, $H({\rm i} \omega_r)  - H({\rm i} \omega_r; S^{\red}_r)$ and 
$H_{r+1}({\rm i} \omega_r)  - H({\rm i} \omega_r; S^{\red}_r)$ are equal, and share 
the same set of left and right singular vectors.
It immediately follows that setting
\begin{equation}\label{eq:red_sval}
	\sigma_{r+1}(\omega; S^{\red}) 	\;  :=	\;  \sigma_{\max} ( H_{r+1}({\rm i} \omega)  - H({\rm i} \omega; S^{\red}) ) \: ,
\end{equation}
and recalling the definition of $\sigma(\omega; S^{\red})$ in (\ref{eq:objective}), we have
\begin{equation}\label{eq:int_props_sval}
		\sigma( \omega_r; S^{\red}_r)		\;	=	\;	\sigma_{r+1}( \omega_r; S^{\red}_r).
\end{equation}

Indeed, as the singular values and vectors of $H({\rm i} \omega_r)  - H({\rm i} \omega_r; S^{\red}_r)$ and 
$H_{r+1}({\rm i} \omega_r)  - H({\rm i} \omega_r ; S^{\red}_r)$  are
the same, and the first two derivatives of $H({\rm i} \omega)  - H({\rm i} \omega ; S^{\red}_r )$ and 
$H_{r+1}({\rm i} \omega)  - H({\rm i} \omega ; S^{\red}_r)$ at $\omega = \omega_r$ are equal due to (\ref{eq:int_props}),
we also have
\begin{equation}\label{eq:interp_interm}
	\frac{{\rm d}^j \sigma}{{\rm d} \omega^j} (\omega_r; S^{\red}_r)
		=	\;	\frac{{\rm d}^j \sigma_{r+1} }{{\rm d} \omega^j}	(\omega_r; S^{\red}_r) 
\end{equation}
for $j = 1,2$. Now $\omega_r$ is a global maximizer of $\sigma( \omega; S^{\red}_r)$ over $\omega$ implying
\[
	\frac{{\rm d} \sigma}{{\rm d} \omega} (\omega_r ; S^{\red}_r)
		\;	=	\;	0		\quad	\text{and}		\quad
	\frac{{\rm d}^2 \sigma}{{\rm d} \omega^2} (\omega_r ; S^{\red}_r)
		\;	\leq	\;	0	\:	.
\]
Assuming that the last inequality on the second derivative above holds strictly, (\ref{eq:interp_interm}) implies
$\omega_r$ is also a local maximizer of $\sigma_{r+1}( \omega; S^{\red}_r)$.

Regarding ${\mathcal F}(S^{\rm red}_r)$ and ${\mathcal F}_{r+1}(S^{\rm red}_r)$,
the following relation always hold:
\begin{equation}\label{eq:F_rel}
	\begin{split}
	\quad	{\mathcal F}(S^\red_r)	&	=	\sup_{\omega \geq 0} \sigma( \omega; S^{\red}_r)		\\
					&	=	\sigma( \omega_r; S^{\red}_r)		\\[.2em]
					&	=	\;	\sigma_{r+1}( \omega_r; S^{\red}_r)	\\[.2em]
					&	\leq	\sup_{\omega \geq 0} \sigma_{r+1}( \omega; S^{\red}_r)
						=	{\mathcal F}_{r+1}(S^\red_r),
	\end{split}
\end{equation}
where the third equality is due to the interpolatory property in (\ref{eq:int_props_sval}). As argued in the
previous paragraph, the point $\omega_r$ is not only a global maximizer of $\sigma( \omega; S^{\red}_r)$, 
but also generically a local maximizer of $\sigma_{r+1}( \omega; S^{\red}_r)$.
If it happens that $\omega_r$ is also a global maximizer of $\sigma_{r+1}( \omega; S^{\red}_r)$ 
beyond being a local maximizer, then the inequality in the equation above becomes an equality, 
and we have the interpolation property
\begin{equation}\label{eq:Finterpolate}
	{\mathcal F}(S^\red_r)
				\;	=	\;
	{\mathcal F}_{r+1}(S^\red_r).
\end{equation}

In the refinement step, if it happens that $\omega_r$ is merely a local maximizer of 
$\sigma_{r+1}( \omega; S^{\red}_r)$, but not a global maximizer, then we find a global 
maximizer $\omega^{(0)}_{r}$ of $\sigma_{r+1}( \omega; S^{\red}_r)$ over 
$\omega \geq 0$ (equivalently compute the ${\mathcal L}_\infty$ norm of 
$H_{r+1}(\cdot) - H(\, \cdot \,\, ; \, S^{\red}_r$)).
Observe that finding such a global maximizer has a small computational cost, as the orders 
of $S_{r+1}$ and $S^{\red}_r$ are small. Then, by employing Theorem \ref{thm:interpolate}, 
we expand the subspaces ${\mathcal V}_{r+1}$, ${\mathcal W}_{r+1}$ further so that the interpolatory
properties are attained between $H_{r+1}({\rm i} \omega)$ after this refinement 
and $H({\rm i} \omega)$ at $\omega = \omega^{(0)}_r$, which in turn implies 
interpolatory properties between $\sigma_{r+1}( \omega; S^{\red}_r)$ and $\sigma( \omega; S^{\red}_r)$
at $\omega = \omega^{(0)}_{r}$. If $\omega_r$ 
after this refinement of $S_{r+1}$ is still only a local maximizer of $\sigma_{r+1}( \omega; S^{\red}_r)$,
but not a global maximizer, then we repeat this refinement
procedure of $S_{r+1}$ up until $\omega_r$ becomes a global maximizer of 
$\sigma_{r+1}( \omega; S^{\red}_r)$ (in practice up to prescribed tolerances).
A formal description of the refinement step is given below in Algorithm \ref{alg:refine_step}.
For simplicity, in line \ref{refine_if} of Algorithm \ref{alg:refine_step} it is assumed that $\omega_r$ is the unique 
global maximizer of $\sigma(\omega; S^\red_r)$. More generally, all of the global maximizers of 
$\sigma(\omega; S^{\red}_r)$ can be returned in line \ref{large_linfinity} of Algorithm \ref{alg:SM} 
(e.g., by employing the level-set methods to compute the ${\mathcal L}_\infty$ norm), and
whether $\omega^{(j)}_r$ is equal to any of these global maximizers can be checked in line \ref{refine_if}
of Algorithm \ref{alg:refine_step}.

Assuming
$\sigma(\omega ; S^{\rm red}_{r})$ is Lipschitz continuous, $\sigma_{r+1}(\omega ; S^{\rm red}_{r})$
is Lipschitz continuous with a uniform Lipschitz constant over the iterations of Algorithm \ref{alg:refine_step},
and the maximizer $\omega^{(j)}_r$ of $\sigma_{r+1}(\omega ; S^{\rm red}_{r})$ over $\omega \geq 0$ 
at every $j$ is required to be in a prescribed bounded interval, the gap $| \omega^{(j)}_r - \omega_r |$  
can be made less than any prescribed amount after finitely many iterations of Algorithm \ref{alg:refine_step}.
At this point, the interpolation condition (\ref{eq:Finterpolate}) is also met up to a multiple of the prescribed amount.

\begin{algorithm}[tb]
 \begin{algorithmic}[1]

\vskip  1.2ex

\FOR{$j = 0,\,1,\,\dots$}
	
	\vskip 1.8ex

	\STATE $\omega^{(j)}_r$ 		$ \gets $ a maximizer of 		
		$\sigma_{r+1}( \omega; S^{\red}_r) = 
			\sigma_{\max}	\left(	H_{r+1}({\rm i} \omega)	-	H({\rm i} \omega ; S^{\red}_r ) \right)$ \\[.2em]
	\hskip 8ex
	over $\omega \geq 0$.   \label{large_linfinity_2}

	\vskip 2ex
	
	\IF{$ \omega^{(j)}_r  \, = \, \omega_r $ 
			(up to prescribed tolerances)} \label{refine_if}
	
	\vskip 1ex
	
		\STATE \textbf{Terminate}  with $V_{r+1}$, $W_{r+1}$ and $S_{r+1}$.  \label{return_spec_2}
		
	\vskip 1ex	
		
	\ENDIF
		
	\vskip 2ex
		
	\textcolor{mygreen}{\textbf{$\%$ expand the subspaces to interpolate at ${\rm i} \omega^{(j)}_r$}}  \\[.2em]      	
	\STATE  $\widetilde{V}_{r+1} \gets \left[ \;\; \Re [ ({\rm i} \omega^{(j)}_r E - A)^{-1} B ]  \;\;\;\;  \Re [ ({\rm i} \omega^{(j)}_r E - A)^{-1} E ({\rm i} \omega_r E - A)^{-1} B] \right.$	\\[.2em]
	\phantom{aaaaaaaaaaaa}		
			$ \left.	\Im [ ({\rm i} \omega^{(j)}_r E - A)^{-1} B ]  \;\;\;\;  \Im [ ({\rm i} \omega^{(j)}_r E - A)^{-1} E ({\rm i} \omega^{(j+1)}_r E - A)^{-1} B]	\;\;	\right]$.
	
	\vskip .4ex
	
  \STATE  $\widetilde{W}_{r+1} \gets \left[ \;\; \Re [ ({\rm i} \omega^{(j)}_r E - A)^{-\ast} C^\ast ]  \;\;\;\;  \Re [ ({\rm i} \omega^{(j)}_r E - A)^{-\ast} E ({\rm i} \omega^{(j)}_r E - A)^{-\ast} C^\ast] \right.$	\\[.2em]
	\phantom{aaaaaaaaaaaai}			
			$ \left.	\Im [ ({\rm i} \omega^{(j)}_r E - A)^{-\ast} C^\ast ]  \;\;\;\;  \Im [ ({\rm i} \omega^{(j)}_r E - A)^{-\ast} E ({\rm i} \omega^{(j)}_r E - A)^{-\ast} C^\ast]	\;\;	\right]$.
	
	\vskip .3ex
		        
	\STATE $V_{r+1} \gets \operatorname{orth}\left(\begin{bmatrix} V_{r+1} & \widetilde{V}_{r+1} \end{bmatrix}\right)
			 \text{ and } \: W_{r+1} \gets \operatorname{orth}\left(\begin{bmatrix} W_{r+1} & \widetilde{W}_{r+1} \end{bmatrix}\right)$. \label{R_orthogonalize_2}

	\vskip 2ex

	\textcolor{mygreen}{\textbf{$\%$ update the small system}}       
	\STATE 	Form $A_{r+1}, B_{r+1}, C_{r+1}, E_{r+1}$ using (\ref{eq:small_coeffs}), \\[.2em]
	\hskip 18ex
	and let $\: S_{r+1} = (A_{r+1}, E_{r+1}, B_{r+1}, C_{r+1}, D) \,$.	\label{form_rp1_2}

	\vskip 1.5ex

\ENDFOR
 \end{algorithmic}
\caption{Refinement Step}
\label{alg:refine_step}
\end{algorithm}

\section{Interpolation Properties of the Subspace Framework}\label{sec:int_prop}
Suppose that $\omega_r$ is a global maximizer of $\sigma_{r+1}(\omega ; S^{\red}_r)$ 
by the termination of the refinement step, in which case the interpolation condition (\ref{eq:Finterpolate}) 
holds due to (\ref{eq:F_rel}). It can be shown that, assuming ${\mathcal F}$ and ${\mathcal F}_{r+1}$ are 
twice differentiable at $S^\red_r$, indeed all of the first two derivatives 
of ${\mathcal F}$ and ${\mathcal F}_{r+1}$ are equal at $S^\red_r$ as well. To this end, 
let $x_1, x_2$ be any two entries of the matrix variables $A^\red , B^\red , C^\red , D^\red, E^\red$ 
of ${\mathcal F}$ and ${\mathcal F}_{r+1}$. Recalling
\begin{equation*}
	\begin{split}
		{\mathcal H}( {\rm i} \omega ; S^\red ) 	
			\;	&	=		\;	 H({\rm i} \omega)  - H({\rm i} \omega ; S^\red) \, ,				\\
		{\mathcal H}_{r+1}( {\rm i} \omega ;  S^\red ) 	\;	&	=	\;	 
				H_{r+1}({\rm i} \omega)  - H({\rm i} \omega; S^\red) \, ,
	\end{split}
\end{equation*}
and by employing (\ref{eq:int_props}), it is apparent that
\begin{align}
			{\mathcal H} ( {\rm i} \omega_r ; S^\red_r ) 
					\;	&	=	\;
{\mathcal H}_{r+1} ( {\rm i} \omega_r ;  S^\red_r )	\,	,		\label{eq:Diffint_0} \\[.7em]
	\frac{\partial  {\mathcal H}}{\partial y} ( {\rm i} \omega_r ; S^\red_r ) 
		 	\;	&	=	\;
\frac{\partial  {\mathcal H}_{r+1}}{\partial y} ( {\rm i} \omega_r ; S^\red_r )	\,	,
		\label{eq:Diffint_1} \\[.2em]
\frac{\partial^2  {\mathcal H}}{\partial y \, \partial z} ( {\rm i} \omega_r ; S^\red_r )  \;	&	=	\;
\frac{\partial^2  {\mathcal H}_{r+1}}{\partial y \, \partial z} ( {\rm i} \omega_r ; S^\red_r )	\,	,
	 	\label{eq:Diffint_2}
\end{align}
for all $y, z \in \{ \omega, x_1, x_2 \}$. By exploiting
\begin{equation*}
	{\mathcal F}( S^\red ) 	
					\;	=	\;
	\sup_{\omega \geq 0} \:
	\sigma_{\max} ( {\mathcal H}( {\rm i} \omega ; S^\red ) ) \, ,  \quad
	{\mathcal F}_{r+1}( S^\red ) 	
					\; = \;
	\sup_{\omega \geq 0} \:
	\sigma_{\max} ( {\mathcal H}_{r+1}( {\rm i} \omega ; S^\red ) ) \, ,
\end{equation*}
and using implicit differentiation
\begin{equation}\label{eq:int_fder}
\begin{split}
	\frac{\partial  {\mathcal F}}{\partial y} ( S^\red_r ) 
		\;		&		=	\;
	\frac{ \partial \, (\sigma_{\max} \circ {\mathcal H}) }{ \partial y} ( {\rm i} \omega_r ; S^\red_r )  		\\
		&	=	\;
	\frac{ \partial \, (\sigma_{\max} \circ  {\mathcal H}_{r+1})} { \partial y}( {\rm i} \omega_r ; S^\red_r ) 	
		\;	=	\;
	\frac{\partial  {\mathcal F}_{r+1}}{\partial y} ( S^\red_r ) 
\end{split}
\end{equation}
for $y \in \{ x_1, x_2 \}$, where the second equality is due to (\ref{eq:Diffint_1}), as well as 
 (\ref{eq:Diffint_0}) implying the fact that
${\mathcal H}( {\rm i} \omega_r ; S^\red_r )$ and
${\mathcal H}_{r+1}( {\rm i} \omega_r ;  S^\red_r )$ have
the same left and right singular vectors. We remark that, for the first and third equalities 
above, we use the fact $\omega_r$ is a global maximizer of 
$\sigma(\omega ; S^{\red}_r) = \sigma_{\max}({\mathcal H}({\rm i} \omega ; S^\red_r))$ and
$\sigma_{r+1}(\omega ; S^{\red}_r) = \sigma_{\max}({\mathcal H}_{r+1}({\rm i} \omega ; S^\red_r))$,
respectively.

Moreover, for any $y, z \in \{ \omega, x_1, x_2 \}$, we have
\begin{equation*}
	\frac{ \partial^2 (\sigma_{\max} \circ {\mathcal H})}{ \partial y \, \partial z} (  {\rm i}\omega_r ; S^\red_r )  
					\;\;	=	\;\;
	\frac{ \partial^2 (\sigma_{\max} \circ {\mathcal H}_{r+1})}{ \partial y \, \partial z}  ( {\rm i} \omega_r ;  S^\red_r )  
\end{equation*}
due to (\ref{eq:Diffint_1}) and (\ref{eq:Diffint_2}) combined with the fact that
${\mathcal H}( {\rm i}  \omega_r ;  S^\red_r )$,
${\mathcal H}_{r+1}( {\rm i} \omega_r ; S^\red_r )$ have the same 
singular values and vectors due to  (\ref{eq:Diffint_0}). Consequently,
\begin{equation}\label{eq:int_sder}
\begin{split}
	 \frac{\partial^2  {\mathcal F}}{\partial y \: \partial z} ( S^\red_r ) 
				\;\;	& =	\;\;
	\frac{ \partial^2 ( \sigma_{\max} \circ {\mathcal H} ) }{ \partial y \: \partial z}
	( {\rm i} \omega_r ; S^\red_r ) 	\; 	+	\;
	 \frac{ \partial^2 (\sigma_{\max} \circ {\mathcal H})}{ \partial y \: \partial \omega}( {\rm i} \omega_r ; S^\red_r ) 
		\;	\times	\;  \\
	&	\;\;\;\;	\;\;\;	\left\{
	- \frac{\partial^2 (\sigma_{\max} \circ {\mathcal H})}{ \partial \omega \: \partial z }
		( {\rm i} \omega_r ; S^\red_r ) 	\bigg/
		\frac{ \partial^2 ( \sigma_{\max} \circ {\mathcal H}) }{ \partial^2 \omega }( {\rm i} \omega_r ; S^\red_r ) 		
	\right\}	\hskip 2ex			\\[1.5em]
				& =	\;
\frac{ \partial^2 (\sigma_{\max} \circ {\mathcal H}_{r+1}) }{ \partial y \: \partial z}( {\rm i} \omega_r ; S^\red_r )  \;  +  \;
	  \frac{ \partial^2 (\sigma_{\max} \circ {\mathcal H}_{r+1})}{ \partial y \: \partial \omega}( {\rm i} \omega_r ; S^\red_r )
		\;	\times	\;			\\
	&	\;\;\;\;\;\;\;	 \left\{
- \frac{\partial^2 (\sigma_{\max} \circ {\mathcal H}_{r+1})}{ \partial \omega \: \partial z }( {\rm i} \omega_r ; S^\red_r ) 	
			\bigg/
\frac{ \partial^2 ( \sigma_{\max} \circ {\mathcal H}_{r+1})}{ \partial^2 \omega }( {\rm i} \omega_r ; S^\red_r ) 		
	\right\}	\hskip -4ex	\\[1.4em]
		&	=	\;\;\;	 \frac{\: \partial^2  {\mathcal F}_{r+1}}{\partial y \: \partial z} ( S^\red_r ) 
\end{split}
\end{equation}
for any $y, z \in \{ x_1, x_2 \}$.


\section{A Quadratic Convergence Result Regarding Algorithm \ref{alg:SM}}\label{sec:quad_conv}

In this section, we establish a result that indicates a quadratic convergence regarding
the iterates of Algorithm \ref{alg:SM} under a few assumptions, especially smoothness
assumptions.


In this section and the next section, we denote with ${\mathcal D}^{\sr, m , p}$ the set of consisting 
of every descriptor system $S^\red$ of order $\sr$ and index at most one with semi-simple poles,
$m$ inputs, $p$ outputs.
Throughout this section, we make use of the vectorization ${\mathcal V}(S^{\red})$ of the system
$S^{\red} = (A^{\red}, E^{\red}, B^{\red}, C^{\red}, D^{\red})$ defined as
\begin{equation}\label{eq:vec_S}
	{\mathcal V}(S^{\red})
			:=	
	\big[	
	\;
			\text{vec}(A^{\red})^T  \;\;
			\text{vec}(E^{\red})^T  \;\;
			\text{vec}(B^{\red})^T  \;\;
			\text{vec}(C^{\red})^T  \;\;
			\text{vec}(D^{\red})^T
	\;
	\big]^T		,
\end{equation}
where $\text{vec}(M)$ denotes the vector obtained by stacking up the columns of matrix $M$.
The gradients $\nabla {\mathcal F} (S^{\red})$ and $\nabla {\mathcal F}_{r+1} (S^{\red})$
are vectors formed of the first partial derivatives of ${\mathcal F}(S^{\red})$ and ${\mathcal F}_{r+1}(S^{\red})$
based on the ordering of the variables, i.e., the entries of $A^{\red}, E^{\red}, B^{\red}, C^{\red}, D^{\red}$, 
in the vectorization in (\ref{eq:vec_S}). Similarly, 
$\nabla^2 {\mathcal F} (S^{\red})$ and $\nabla^2 {\mathcal F}_{r+1} (S^{\red})$ denote the Hessians
of ${\mathcal F}(S^{\red})$ and ${\mathcal F}_{r+1}(S^{\red})$ based on the ordering of the variables
according to (\ref{eq:vec_S}).

We assume that there are two consecutive iterates $S^{\red}_r$ and $S^{\red}_{r+1}$ of Algorithm \ref{alg:SM}
that are sufficiently close to a local maximizer $S^{\red}_\ast$ of ${\mathcal F}(S^{\red})$.
Moreover, we silently assume throughout that the interpolation properties in (\ref{eq:int_fder}) and (\ref{eq:int_sder})
hold at $S^{\red}_r$.
We also keep the assumption stated below that guarantees that ${\mathcal F}(S^{\red})$
is real analytic at $S^{\red}_\ast$. 
\begin{assumption}\label{ass:full}
The maximum of $\sigma(\omega ; S^{\red}_\ast)$ over all $\omega \geq 0$  is attained at a unique
$\omega_\ast$. Furthermore, 
$\sigma(\omega_\ast ; S^{\red}_\ast) = \sigma_{\max}( {\mathcal H}( {\rm i} \omega_\ast ; S^{\red}_\ast))$ 
is a simple singular value of ${\mathcal H}( {\rm i} \omega_\ast ; S^{\red}_\ast)$.
\end{assumption}
An assumption regarding the smoothness of ${\mathcal F}_{r+1}(S^{\red})$ that we rely on
is given next. Recalling $\| v \|_2$ for a vector $v$ denotes the 2-norm of $v$,
we make use of the distance
$\| \widetilde{S}^{\red} - \widehat{S}^{\red} \| := 
			\| {\mathcal V}(\widetilde{\mathcal S}^{\red}) - {\mathcal V}(\widehat{\mathcal S}^{\red}) \|$
for systems $\widetilde{S}^{\red}$, $\widehat{S}^{\red} \in {\mathcal D}^{\sr,m,p}$, and
the ball $B( \widehat{S}^{\red} , \delta) :=  
	\{ \widetilde{S}^\red \in {\mathcal D}^{\sr,m,p} \; | \;  \| \widetilde{S}^{\red} - \widehat{S}^{\red} \| < \delta  \}$ 
for a system $\widehat{S}^\red \in {\mathcal D}^{\sr,m,p}$ and positive real number $\delta$.
\begin{assumption}\label{ass:red}
$\;\;$ \\[-1.1em]
\begin{enumerate}
\item[\bf (i)]
For every $S^{\red} \in B( S^{\red}_{r} , \delta_r)$ 
with $\delta_r := \| S^{\red}_{r+1} - S^{\red}_r \|$
the following conditions hold:
\begin{itemize}
	\item The maximum of $\sigma_{r+1}(\omega ; S^{\red})$ over all $\omega \geq 0$  is attained at a unique
	$\overline{\omega}$.
	\item 
	The singular value
	$\sigma(\overline{\omega} ; S^{\red}) = \sigma_{\max}( {\mathcal H}_{r+1}( {\rm i} \overline{\omega} ; S^{\red}))$ 
	of ${\mathcal H}_{r+1}( {\rm i} \overline{\omega} ; S^{\red})$ is simple.
\end{itemize}
\item[\bf (ii)]
Moreover, all of the third derivatives of ${\mathcal F}_{r+1}(S^{\red})$ can be bounded by quantities independent
of $r$ at all $S^{\red} \in B( S^{\red}_{r} , \delta_r)$.
\end{enumerate}
\end{assumption}
We remark that part (i) of Assumption \ref{ass:red} guarantees that ${\mathcal F}_{r+1}(S^{\red})$ 
is real-analytic in the ball  $B( S^{\red}_{r} , \delta_r)$, and so three times differentiable in this ball,
which we depend on in part (ii) of Assumption \ref{ass:red}. 

We state and prove the main result that relates $\| S^{\red}_{r} - S^{\red}_\ast \|$
and $\| S^{\red}_{r+1} - S^{\red}_\ast \|$ below.
\begin{theorem}
Suppose that two consecutive iterates $S^{\red}_r$ and $S^{\red}_{r+1}$ of Algorithm \ref{alg:SM}
are sufficiently close to a local maximizer $S^{\red}_\ast$ of ${\mathcal F}(S^{\red})$. Moreover,
suppose Assumptions \ref{ass:full}, \ref{ass:red} hold, and $\nabla^2 {\mathcal F}(S^{\red}_\ast)$
is invertible. Then there is a constant $C$ independent of $r$ such that
\[
	\| S^{\red}_{r+1} - S^{\red}_\ast \|	\;\;	\leq		\;\;	C \cdot  \| S^{\red}_{r} - S^{\red}_\ast \|^2	\,	.
\]
\end{theorem}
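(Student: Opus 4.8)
The plan is to show that the update $S^\red_r \mapsto S^\red_{r+1}$ is, to leading order, a Newton step for the equation $\nabla\mathcal{F} = 0$, and then to invoke the classical quadratic error estimate for Newton's method. Two ingredients make this work and are already available. First, Assumption~\ref{ass:full} makes $\mathcal{F}$ real-analytic in a whole neighborhood of $S^\red_\ast$, so $\nabla^2\mathcal{F}$ is locally Lipschitz there and, since $\nabla^2\mathcal{F}(S^\red_\ast)$ is invertible, $\nabla^2\mathcal{F}(S^\red_r)$ is invertible with a uniformly bounded inverse once $S^\red_r$ is close enough to $S^\red_\ast$. Second, the interpolation identities \eqref{eq:Finterpolate}, \eqref{eq:int_fder} and \eqref{eq:int_sder} give $\nabla\mathcal{F}(S^\red_r) = \nabla\mathcal{F}_{r+1}(S^\red_r)$ and $\nabla^2\mathcal{F}(S^\red_r) = \nabla^2\mathcal{F}_{r+1}(S^\red_r)$, so $\mathcal{F}_{r+1}$ agrees with $\mathcal{F}$ to second order at $S^\red_r$.

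First I would record the first-order optimality conditions $\nabla\mathcal{F}_{r+1}(S^\red_{r+1}) = 0$ (as $S^\red_{r+1}$ is a minimizer of $\mathcal{F}_{r+1}$, which is differentiable there by Assumption~\ref{ass:red}(i)) and $\nabla\mathcal{F}(S^\red_\ast) = 0$ (as $S^\red_\ast$ is a local extremizer of $\mathcal{F}$ and $\mathcal{F}$ is differentiable there by Assumption~\ref{ass:full}). Expanding $\nabla\mathcal{F}_{r+1}$ by Taylor's theorem about $S^\red_r$, evaluating at $S^\red_{r+1}$, and bounding the second-order remainder along the segment from $S^\red_r$ to $S^\red_{r+1}$ (which lies in the closed ball of radius $\delta_r = \|S^\red_{r+1} - S^\red_r\|$, where Assumption~\ref{ass:red}(ii) bounds the third derivatives of $\mathcal{F}_{r+1}$ by a constant independent of $r$) gives
\[
 0 \;=\; \nabla\mathcal{F}_{r+1}(S^\red_r) \,+\, \nabla^2\mathcal{F}_{r+1}(S^\red_r)\bigl(\mathcal{V}(S^\red_{r+1})-\mathcal{V}(S^\red_r)\bigr) \,+\, R_r, \qquad \|R_r\| \le c_1\,\|S^\red_{r+1}-S^\red_r\|^2 .
\]
Substituting the interpolation identities and solving for the step (using invertibility of $\nabla^2\mathcal{F}(S^\red_r) = \nabla^2\mathcal{F}_{r+1}(S^\red_r)$) yields
\[
 \mathcal{V}(S^\red_{r+1})-\mathcal{V}(S^\red_r) \;=\; -\,\nabla^2\mathcal{F}(S^\red_r)^{-1}\nabla\mathcal{F}(S^\red_r) \;-\; \nabla^2\mathcal{F}(S^\red_r)^{-1}R_r ,
\]
so the true step differs from the Newton step for $\nabla\mathcal{F}=0$ at $S^\red_r$ only by a term of size $O(\|S^\red_{r+1}-S^\red_r\|^2)$.

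Next I would run the standard Newton estimate. Writing $\nabla\mathcal{F}(S^\red_r) = \nabla\mathcal{F}(S^\red_r) - \nabla\mathcal{F}(S^\red_\ast) = \int_0^1 \nabla^2\mathcal{F}\bigl(\mathcal{V}(S^\red_\ast)+t(\mathcal{V}(S^\red_r)-\mathcal{V}(S^\red_\ast))\bigr)\,dt \cdot \bigl(\mathcal{V}(S^\red_r)-\mathcal{V}(S^\red_\ast)\bigr)$ and using local Lipschitz continuity of $\nabla^2\mathcal{F}$ near $S^\red_\ast$ gives $\bigl\|\mathcal{V}(S^\red_r)-\mathcal{V}(S^\red_\ast) - \nabla^2\mathcal{F}(S^\red_r)^{-1}\nabla\mathcal{F}(S^\red_r)\bigr\| \le c_2\,\|S^\red_r-S^\red_\ast\|^2$ with $c_2$ independent of $r$. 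Combining this with the previous display and with $\|S^\red_{r+1}-S^\red_r\| \le \|S^\red_{r+1}-S^\red_\ast\| + \|S^\red_r-S^\red_\ast\|$ gives an inequality of the form $\|S^\red_{r+1}-S^\red_\ast\| \le K\|S^\red_r-S^\red_\ast\|^2 + K\bigl(\|S^\red_{r+1}-S^\red_\ast\|+\|S^\red_r-S^\red_\ast\|\bigr)^2$ with $K$ independent of $r$. Since $S^\red_r$ and $S^\red_{r+1}$ are assumed sufficiently close to $S^\red_\ast$, the $\|S^\red_{r+1}-S^\red_\ast\|$-terms on the right are absorbed into the left-hand side, leaving $\|S^\red_{r+1}-S^\red_\ast\| \le C\,\|S^\red_r-S^\red_\ast\|^2$ for a constant $C$ independent of $r$, which is the claim.

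I expect the main obstacle to be precisely this last absorption: the Taylor remainder of $\nabla\mathcal{F}_{r+1}$ is naturally controlled in terms of $\|S^\red_{r+1}-S^\red_r\|^2$ rather than directly in terms of $\|S^\red_r-S^\red_\ast\|^2$, so one must bootstrap through the triangle inequality and shrink the neighborhood defining ``sufficiently close'' so that the quadratic-in-$\|S^\red_{r+1}-S^\red_\ast\|$ terms can be moved to the left. A secondary point requiring care is the uniform-in-$r$ invertibility and bounded inverse of $\nabla^2\mathcal{F}(S^\red_r)$, which is exactly where analyticity of $\mathcal{F}$ on a neighborhood of $S^\red_\ast$ (not just at the point) and invertibility of $\nabla^2\mathcal{F}(S^\red_\ast)$ enter, together with the standing assumption that the interpolation identities \eqref{eq:int_fder}--\eqref{eq:int_sder} hold at $S^\red_r$.
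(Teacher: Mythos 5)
Your proposal is correct and follows essentially the same route as the paper's proof: two Taylor expansions of the gradients about $S^{\red}_r$ (one for $\nabla {\mathcal F}$ evaluated at $S^{\red}_\ast$, one for $\nabla {\mathcal F}_{r+1}$ evaluated at $S^{\red}_{r+1}$), the interpolation identities $\nabla {\mathcal F}_{r+1}(S^{\red}_r)=\nabla {\mathcal F}(S^{\red}_r)$ and $\nabla^2 {\mathcal F}_{r+1}(S^{\red}_r)=\nabla^2 {\mathcal F}(S^{\red}_r)$ to merge them, and uniform invertibility of the Hessian near $S^{\red}_\ast$ to conclude. Your explicit treatment of the final absorption step (converting the $\|S^{\red}_{r+1}-S^{\red}_r\|^2$ remainder into $\|S^{\red}_r-S^{\red}_\ast\|^2$ via the triangle inequality) is a point the paper leaves implicit, but it does not change the argument.
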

\begin{proof}
By continuity $\sigma(\omega ; S^{\red}) = \sigma_{\max}({\mathcal H}({\rm i} \omega ; S^{\red}))$
remains simple at all $\omega$ and $S^\red \in {\mathcal D}^{\sr, m, p}$ such that $\omega$ is sufficiently close to $\omega_\ast$
and $S^\red$ is sufficiently close to $S^\red_\ast$, 
where $\omega_\ast$ is as in Assumption \ref{ass:full}. 
Thus, by the analytic implicit function theorem, there is $\widetilde{\delta} > 0$ such that
${\mathcal F}(S^{\red})$ is real analytic at all $S^{\red} \in B(S^{\red}_\ast, \widetilde{\delta})$
(see, e.g., \cite[Lemma 16]{Men2018} for the details in the analogous context of the distance 
instability). 
By the assumption that  $\nabla^2 {\mathcal F}(S^{\red}_\ast)$ is invertible, and continuity of the second 
partial derivatives of ${\mathcal F}(S^{\red})$ in $B(S^{\red}_\ast, \widetilde{\delta})$,
the Hessian $\nabla^2 {\mathcal F}(S^{\red})$ remains invertible in a ball $B(S^{\red}_\ast, \delta)$
for some $\delta < \widetilde{\delta}$.
Furthermore, without loss of generality, 
we assume $S^{\red}_{r}, S^{\red}_{r+1}$ are close enough to $S^{\red}_\ast$ so that
$S^{\red}_{r}, S^{\red}_{r+1} \in B(S^{\red}_\ast, \delta)$, and the ball $B(S^{\red}_r, \delta_r)$ 
in Assumption \ref{ass:red} is contained in $B(S^{\red}_\ast, \delta)$.
We let $\beta := \min_{S^{\red} \in B(S^{\red}_\ast, \delta)} \, \sigma_{\min}(\nabla^2 {\mathcal F}(S^{\red})) > 0$,
and note that $\nabla^2 {\mathcal F}(S^{\red})$ is Lipshitz continuous in $B(S^{\red}_\ast, \delta)$, say with the
Lipschitz constant $\gamma$.

By an application of Taylor's theorem with integral remainder, we have
\begin{equation}\label{eq:Taylor_int}
	\begin{split}
	0	 \;	&	=	\;	 
	\nabla {\mathcal F}(S^{\red}_\ast)	\\[.3em]
			&	 = 	\;
	\nabla {\mathcal F}(S^{\red}_r)
		  \; +  \;
	\int_0^1 \nabla^2 {\mathcal F}(S^{\red}_r + t (S^{\red}_\ast - S^{\red}_r)) \, 
			({\mathcal V}(S^{\red}_\ast) - {\mathcal V}(S^{\red}_r)) \, {\rm d} t		\\[.3em]
		&	 = 	\;
	\nabla {\mathcal F}(S^{\red}_r)
		 + 
	\nabla^2 {\mathcal F}(S^{\red}_r) ({\mathcal V}(S^{\red}_\ast) - {\mathcal V}(S^{\red}_r))
		 + 
	{\mathcal O}( \|  S^{\red}_\ast -  S^{\red}_r \|^2 )	\,	,
	\end{split}
\end{equation}
where, for the third equality, we have used the Lipschitz continuity of  
$\nabla^2 {\mathcal F}(S^{\red})$ in $B(S^{\red}_\ast, \delta)$. Additionally,
by Taylor's theorem with second order Lagrange remainder, 
\begin{equation}\label{eq:rconv_inter}
\begin{split}
	0	&	\:	=	\:	\nabla {\mathcal F}_{r+1}(S^{\red}_{r+1})	\\[.3em]
		&	\;	=	\;	
		\nabla {\mathcal F}_{r+1}(S^{\red}_r)
		\: + \:
		\nabla^2 {\mathcal F}_{r+1}(S^{\red}_r) ({\mathcal V} (S^{\red}_{r+1})  -  {\mathcal V} (S^{\red}_r))
		\: + \:
		{\mathcal O}( \|  S^{\red}_{r+1} -  S^{\red}_r \|^2 )	\\[.3em]
		&	\;	=	\;	
		\nabla {\mathcal F}(S^{\red}_r)
		\: + \:
		\nabla^2 {\mathcal F}(S^{\red}_r) ({\mathcal V}(S^{\red}_{r+1}) - {\mathcal V}(S^{\red}_r))
		\: + \:
		{\mathcal O}( \|   S^{\red}_{r+1}   -   S^{\red}_r \|^2 )	\,	,
\end{split}
\end{equation}
where the third equality is due to $\nabla {\mathcal F}_{r+1}(S^{\red}_r) = \nabla {\mathcal F}(S^{\red}_r)$
and $\nabla^2 {\mathcal F}_{r+1}(S^{\red}_r) = \nabla^2 {\mathcal F}(S^{\red}_r)$,
that are consequences of (\ref{eq:int_fder}) and (\ref{eq:int_sder}).

By employing (\ref{eq:rconv_inter}) in (\ref{eq:Taylor_int}), we deduce
\[
	\nabla^2 {\mathcal F}(S^{\red}_r) ({\mathcal V}(S^{\red}_\ast) - {\mathcal V}(S^{\red}_{r+1}))
		\;	=	\;  	{\mathcal O}( \| S^{\red}_{r+1} -  S^{\red}_r \|^2 )
					+	
					{\mathcal O}( \| S^{\red}_\ast -  S^{\red}_r \|^2 ).
\]
Finally, noting 
$\| \nabla^2 {\mathcal F}(S^{\red}_{r}) ({\mathcal V}(S^{\red}_\ast) -  {\mathcal V}(S^{\red}_{r+1})) \|
		\geq		\beta		\| S^{\red}_\ast -  S^{\red}_{r+1} \|$, from the last equality we obtain 
\[
	\| S^{\red}_{r+1} - S^{\red}_\ast \|	\;	\leq		\;
								{\mathcal O}( \| S^{\red}_r -  S^{\red}_\ast \|^2 )
\]
as desired.
\end{proof}

\section{Dealing with Asymptotic Stability Constraints}\label{sec:imp_stab}
In many applications, the reduced order system sought $S^\red = (A^\red, E^\red, B^\red, C^\red, D^\red)$
of order $\sr$ not only is close with respect to the ${\mathcal L}_\infty$ norm, but may also be required
to be asymptotically stable. As we search through reduced order systems of index at most one,
the asymptotic stability requirement is equivalent to the condition $\alpha (A^\red, E^\red) < 0$,
where $\alpha(A^\red, E^\red)$ is the spectral abscissa of the pencil $L(s) = A^\red - s E^\red$
defined by
\[
		\alpha (A^\red, E^\red)
			\;	:=	\;
		\max
		\left\{
				\text{Re}(z) \; | 	\;	z \in {\mathbb C} \;\, \text{ s.t. } \det(A - zE) = 0		
		\right\}.
\]
In this setting, with ${\mathcal F}(S^\red)$ defined as in (\ref{eq:objective2}), 
rather than the unconstrained minimization of ${\mathcal F}(S^\red)$ over all 
descriptor systems $S^\red \in {\mathcal D}^{\sr , m , p}$, it may be desirable to solve the
constrained minimization problem
\begin{equation}\label{eq:constrained_problem}
	\min
	\left\{
		{\mathcal F}(S^\red) 	\;	:	\;
		 S^\red \in {\mathcal D}^{\sr , m , p} \; \text{ s.t. }  \;  \alpha (A^\red, E^\red) \leq \beta	
	\right\} 
\end{equation}
for a prescribed negative real number $\beta$,
where ${\mathcal D}^{\sr , m , p}$ denotes the set of all descriptor systems of order $\sr$ and index at most one
with semi-simple poles, $m$ inputs, $p$ outputs.

Extension of the proposed subspace framework, that is Algorithm \ref{alg:SM}, to deal
with the constrained minimization problem in (\ref{eq:constrained_problem}) rather than
the unconstrained minimization of ${\mathcal F}(S^\red)$ is straightforward.
The only difference in Algorithm \ref{alg:SM} is in line \ref{exp_start}, where
$S^\red_r$ is no longer a minimizer of ${\mathcal F}_r(S^\red)$, but instead
a minimizer of the constrained problem
\begin{equation}\label{eq:constrained_problem_reduced}
	\min
	\left\{
		{\mathcal F}_r(S^\red) 	\;	:	\;
		 S^\red \in {\mathcal D}^{\sr , m , p} \; \text{ s.t. }  \;  \alpha (A^\red, E^\red) \leq \beta	
	\right\} 
\end{equation}
for the reduced function ${\mathcal F}_r(S^\red)$ as in (\ref{eq:reduced_objective}).
The problem in (\ref{eq:constrained_problem_reduced}) involves only the small systems
$S_r$ as well as $S^\red$, and is solvable by means of Newton-method based 
approaches. Such a Newton-method based approach makes use of the gradient of the
constraint function ${\mathcal C}(S^\red) :=  \alpha (A^\red, E^\red) - \beta$,
in addition to the gradient of the objective ${\mathcal F}_r(S^\red)$.
Let $\lambda$ be the rightmost eigenvalue of
the pencil $L(s) = A^\red - s E^\red$ with $u$ and $v$ denoting a pair of corresponding
left and right eigenvectors normalized so that $u^\ast E^\red v = 1$, and assume 
$\lambda$ is a simple eigenvalue and the unique rightmost eigenvalue of $L(s)$,
which ensures the differentiability of ${\mathcal C}(S^\red)$. Then, by differentiating
the equation $A^\red v = \lambda E^{\red} v$ with respect to the entries of $A^\red$
and $E^\red$ and multiplying with $u^\ast$ from left,
the partial derivatives of ${\mathcal C}(S^\red)$ are given by
\begin{equation*}
	\frac{\partial {\mathcal C}}{A^\red_{ij}}(S^\red)	\;	=	\Re( \overline{u}_i v_j ) \; ,
	\quad\quad
	\frac{\partial {\mathcal C}}{E^\red_{jj}}(S^\red)	\;	=	-\Re( \lambda \overline{u}_j v_j ) \; ,
\end{equation*}
where $A^\red_{ij}$ is the subdiagonal, superdiagonal or diagonal entry of the matrix variable $A^\red$
at position $(i,j)$, and $E^\red_{jj}$ is the diagonal entry of $E^\red$ at position $(j, j)$.


We remark that, assuming $\omega_r$ is again a global minimizer of $\sigma_{r+1}(\omega; S^\red_r)$
after the refinement step, the interpolation properties 
\[
	{\mathcal F}(S^\red_r) = {\mathcal F}_{r+1}(S^\red_r)	\;	,	\;\;
	\nabla {\mathcal F}(S^\red_r) = \nabla {\mathcal F}_{r+1}(S^\red_r)	\;	,	\;\;	\text{and}   \;\; 
	\nabla^2 {\mathcal F}(S^\red_r) = \nabla^2 {\mathcal F}_{r+1}(S^\red_r)
\]
still hold. Moreover, if a logarithmic barrier approach is adopted for the solution
of the constrained problems, then, in essence, 
constrained problems are turned into unconstrained problems by lifting the
constraints to the objective via the logarithmic barrier functions
\begin{equation*}
\begin{split}
	L^\mu(S^\red) 		\:	 & = 		\:	{\mathcal F}(S^\red)
									\, - \,	\mu \cdot  \log 	(\beta - \alpha (A^\red, E^\red)) \; , 	\\
	L^\mu_r(S^\red) 		\:	& = 		\:	{\mathcal F}_r(S^\red)
									\, - \,	\mu \cdot  	\log (\beta - \alpha (A^\red, E^\red))
\end{split}
\end{equation*}
associated with problems (\ref{eq:constrained_problem}), (\ref{eq:constrained_problem_reduced}),
respectively, where $\log(\cdot)$ denotes the natural logarithm of its parameter,
$\mu$ is a positive real number and represent the barrier parameter.
In this case, the interpolation properties extend to the logarithmic barrier functions as well.
In particular, we have
\[
	L^\mu(S^\red_r) = L^\mu_{r+1}(S^\red_r)	\,	,	\;\,
	\nabla L^\mu(S^\red_r) = \nabla L^\mu_{r+1}(S^\red_r)	\,	,	\;\,	\text{and}   \;\,
	\nabla^2 L^\mu(S^\red_r) = \nabla^2 L^\mu_{r+1}(S^\red_r)
\]
for every positive real number $\mu$.

\section{Practical Issues}\label{sec:prac_issues}
Here we spell out a few practical issues regarding Algorithm \ref{alg:SM}
such as how we form the initial systems $S_0$, $S_0^\red$, when we
terminate, the details of bases for projection subspaces,
solutions of reduced ${\mathcal L}_\infty$-norm minimization problems,
and ${\mathcal L}_\infty$-norm computations. 

\subsection{Initialization}
The initial subspaces ${\mathcal V}_ 0, {\mathcal W}_0$ (in line \ref{init_subspaces} of Algorithm \ref{alg:SM})
are chosen so that $H_{0}$, the transfer function of $S_0$, interpolates $H$ at the imaginary parts of a 
prescribed number of dominant poles of $H$.
Formally, for a prescribed $\ell$, let $s_1, \dots, s_\ell \in {\mathbb C}$ be the most dominant $\ell$ 
poles of $H$ with nonnegative imaginary parts (i.e., only the dominant poles with nonnegative
imaginary parts are taken into consideration, as the poles of $H$ appear in complex conjugate pairs such that
any two complex conjugate poles have the same dominance metric), we set
\begin{equation*}
	\begin{split}
	&	{\mathcal V}_0		\;\;	=	\;\;		\bigoplus_{k = 1}^\ell 	
											\bigoplus_{j=0}^1  
												\bigg\{		\Re \left[ \{ (A - {\rm i} \Im s_k E)^{-1} E \}^j (A - {\rm i}\Im s_k E)^{-1} B	 \right] 		\\[-.7em]
	&		\hskip 27ex				\bigoplus	\;	\Im \left[ \{ (A - {\rm i} \Im s_k E)^{-1} E \}^j (A - {\rm i} \Im s_k E)^{-1} B	 \right] 		
																\bigg\},			\\
	&	{\mathcal W}_0		\;\;	=	\;\;		\bigoplus_{k = 1}^\ell 	
											\bigoplus_{j=0}^1  
											\bigg\{		\Re  \left[ C  (A - {\rm i} \Im s_k E)^{-1}  \{ E (A - {\rm i} \Im s_k E)^{-1}  \}^j  \right]^\ast 		\\[-.7em]
	&		\hskip 27ex				\bigoplus	\;	\Im  \left[ C  (A - {\rm i} \Im s_k E)^{-1}  \{ E (A - {\rm i} \Im s_k E)^{-1}  \}^j  \right]^\ast 	
																	\bigg\}.								
	\end{split}
\end{equation*}
Theorem \ref{thm:interpolate} ensures that
\begin{equation*}
	\begin{split}
		&	H({\rm i} \Im s_k)	=	H_0({\rm i} \Im s_k),	
				\quad		H^{(j)}({\rm i} \Im s_k)	=	H^{(j)}_0({\rm i} \Im s_k)			\\
		&	H(-{\rm i} \Im s_k)	=	H_0(-{\rm i} \Im s_k),	
				\quad	\text{and}		\quad	H^{(j)}(-{\rm i} \Im s_k)	=	H^{(j)}_0(-{\rm i} \Im s_k)
	\end{split}
\end{equation*}
for $j = 1, 2, 3$ and $k = 1,\dots, \ell$.

Additionally, at every subspace iteration with $r > 0$,
an initial point is needed for the solution of the minimization problem 
in line \ref{exp_start} of Algorithm \ref{alg:SM} regardless of how it is solved, e.g., via gradient 
descent or BFGS. This initialization carries significance, as it affects which local minimizer 
of ${\mathcal F}_r$ is to be converged. At a subspace iteration with $r > 0$, 
the minimizer is initialized with the optimal reduced system from the previous iteration, 
that is with $S^\red_{r-1}$. Initial $S^{\red}_0$ of order $\sr$ must be supplied to Algorithm \ref{alg:SM}.
We set $S^{\red}_0$ as either
\vskip .5ex
	\begin{itemize}
		\item the model of order $\sr$ obtained from an application of the balanced truncation approach, or 
		\smallskip
		\item the model of order $\sr$ whose transfer function interpolates $H$ at the imaginary parts of a prescribed number 
		of dominant poles of $H$.
	\end{itemize}
\vskip .5ex
For the latter choice, we remark that the number of dominant poles used to form $S_0$ is strictly greater than the
number of dominant poles used to form this initial model $S^{\red}_0$ for the minimizer. For either choice, we make sure
$\, \text{dim} \: {\mathcal V}_0 \, = \, \text{dim} \: {\mathcal W}_0 \, > \, \sr \,$ by using sufficiently many dominant
poles of $H$ when forming $S_0 = (A_0, E_0, B_0, C_0, D)$. 

An issue that requires attention is that 
$S^{\red}_0 = (A^{\red}_0, E^{\red}_0, B^{\red}_0, C^{\red}_0, D^{\red}_0)$ must be such that 
$A^{\red}_0$ is tridiagonal and $E^{\red}_0$ is diagonal, whereas the balanced truncation or
the interpolatory approach yields the system $(\widehat{A}, \widehat{E} , \widehat{B}, \widehat{C}, \widehat{D})$
of order $\sr$ such that $\widehat{A}$ and $\widehat{E}$ do not necessarily have these structures. 
Let us suppose $\widehat{E}$ is invertible.
Then we can first compute the eigenvalues of the $\sr \times \sr$ pencil $\widehat{L}(s) = \widehat{A} - s \widehat{E}$, and form
a block diagonal real matrix $T_2$ with $2\times 2$ and $1\times 1$ blocks along its diagonal that have 
the same eigenvalues as $\widehat{L}$. The $2\times 2$ and $1\times 1$ blocks of $T_2$ on its diagonal
correspond to a conjugate pair of complex eigenvalues and real eigenvalues of $\widehat{L}$, respectively.
Here we remark that $T_2$ is an $\sr \times \sr$ matrix.
Hence, we can compute its eigenvalue decomposition
\[
		T_2	=	V_2	\Lambda V_2^{-1}
\]
for a diagonal matrix $\Lambda$ and invertible $V_2$ at ease. We 
also have the eigenvalue decomposition
\[
			\widehat{E}^{-1} \widehat{A}	=	V	\Lambda V^{-1}
\]
at hand. Note that the middle factors in eigenvalue decompositions above are the same, as
$T_2$ has the same eigenvalues as the pencil $\widehat{L}$, which in turn has the
same eigenvalues as $\widehat{E}^{-1} \widehat{A}$. But then 
\begin{equation*}
	\begin{split}
	\widehat{H}(s)	:=	\widehat{C} (s \widehat{E}  -   \widehat{A} )^{-1} \widehat{B} + \widehat{D}
				&		=	\widehat{C} (s I  -   \widehat{E}^{-1} \widehat{A} )^{-1} \widehat{E}^{-1} \widehat{B} 
									+ \widehat{D}	 	\\
				&		=	\widehat{C} (s I  -   V	\Lambda V^{-1} )^{-1} \widehat{E}^{-1} \widehat{B} + \widehat{D}		\\
				&		=	( \widehat{C} V ) (s I  -   	\Lambda  )^{-1}  ( V^{-1} \widehat{E}^{-1} \widehat{B} ) 
									+ \widehat{D}		\\
				&		=	( \widehat{C} V ) (s I  -   	V_2^{-1} T_2 V_2  )^{-1}  ( V^{-1} \widehat{E}^{-1} \widehat{B} ) + \widehat{D}		\\
				&		=	( \widehat{C} V V_2^{-1} ) (s I  -   T_2  )^{-1}  ( V_2 V^{-1} \widehat{E}^{-1} \widehat{B} ) + \widehat{D}.
	\end{split}
\end{equation*}
Hence, we can use 
\[
	A^{\red}_0 := T_2, \;\;	E^{\red}_0 := I,		\;\;	
					B^\red_0 := V_2 V^{-1} \widehat{E}^{-1} \widehat{B},	\;\;	C^\red_0 :=  \widehat{C} V V_2^{-1},  \;\;
	D^{\red}_0 := \widehat{D}_r
\] 
as the matrices of the initial system $S^\red_0$.

\subsection{Termination}\label{sec:termination}
The termination in line \ref{return_spec} of Algorithm \ref{alg:SM} is determined based on the values of 
$\| H(\cdot) - H(\, \cdot \,\, ; \, S_r^\red ) \|_{{\mathcal L}_\infty}$ at two consecutive subspace iterations. The error
$\| H(\cdot) - H(\, \cdot \,\, ; \, S_r^\red ) \|_{{\mathcal L}_\infty}$ is readily available at the $r$th subspace iteration
after line \ref{large_linfinity}, as 
\[
	\| H(\cdot) - H(\, \cdot \,\, ; \, S_r^\red ) \|_{{\mathcal L}_\infty}  	
			\;	=	\; 	 
	\sigma_{\max}	\left(	H({\rm i} \omega_r)	-	H({\rm i} \omega_r ; S^\red_r) \right).
\]
To be precise, we terminate at the $r$th subspace iteration in line \ref{return_spec} if $r \geq 1$ and
\begin{equation}\label{eq:term_cond}
		\left| 		\| H(\cdot) - H( \cdot  ;  S_r^\red ) \|_{{\mathcal L}_\infty}		
				-	
		\| H(\cdot) - H( \cdot  ;  S_{r-1}^\red ) \|_{{\mathcal L}_\infty}		\right|
					\leq		
		\texttt{tol} 			\| H(\cdot) - H( \cdot  ;  S_r^\red ) \|_{{\mathcal L}_\infty}	
\end{equation}
for a prescribed tolerance $\texttt{tol}$.

The termination condition for the minimizer to solve the minimization problem in line \ref{exp_start} of Algorithm \ref{alg:SM}
also requires some care. Recall that the objective ${\mathcal F}_r$ here is nonsmooth, and, as a result, the norms of the 
gradients of ${\mathcal F}_r$ at the iterates generated by the minimizer do not have to converge to zero. Instead, 
the minimizer is terminated if the line-search fails (to return a point that causes sufficient decrease), or the
decrease in ${\mathcal F}_r$ at two consecutive iterates is less than $\varepsilon \cdot \texttt{tol}$ in a relative
sense, where \texttt{tol} is as in (\ref{eq:term_cond}) and $\varepsilon$ is a real number in $(0,0.5)$.

As for the termination condition of the refinement step (i.e., the condition in line \ref{refine_if} of 
Algorithm \ref{alg:refine_step}) employed in practice, we rely on
\[
	\big|
		\omega^{(j)}_r	-	\omega_r
	\big|
		\;\;	\leq		\;\;	\texttt{tol} \left| 	   \omega_r 	   \right|	\,
\]
where $\texttt{tol}$ is again the tolerance in (\ref{eq:term_cond}).

\subsection{Orthonormalization of the Bases for the Subspaces}
Keeping the bases for the subspaces ${\mathcal V}_r$, ${\mathcal W}_r$ (i.e., the columns of $V_r$, $W_r$) 
orthonormal improves the robustness of the algorithm against the rounding errors. For instance, then
the system matrices $A_r, B_r, C_r, E_r$ can be formed more accurately in the presence of rounding
errors. 

This orthonormality property of the bases is attained in line \ref{R_orthogonalize} of Algorithm \ref{alg:SM},
as well as line \ref{R_orthogonalize_2} of Algorithm \ref{alg:refine_step}.
In line \ref{R_orthogonalize} of Algorithm \ref{alg:SM}, $V_r$ and $W_r$ are already 
orthonormal bases for ${\mathcal V}_r$ and ${\mathcal W}_r$.
The expansion directions $\widetilde{V}_{r+1}$, $\widetilde{W}_{r+1}$ to be included in the subspaces to 
obtain the expanded subspaces ${\mathcal V}_{r+1}$, ${\mathcal W}_{r+1}$ has to be orthonormalized
with respect to the existing orthonormal bases $V_{r}$, $W_{r}$. This is achieved
in practice by executing
\begin{equation}\label{eq:orthonormalize}
		\widetilde{V}_{r+1}	\;	\gets		\;	\widetilde{V}_{r+1}	-	V_r  (V_r^T \widetilde{V}_{r+1}	)	
				\quad\quad		\text{and}			\quad\quad
		\widetilde{W}_{r+1}	\;	\gets		\;	\widetilde{W}_{r+1}	-	W_r  (W_r^T \widetilde{W}_{r+1} ).		
\end{equation} 
Near convergence the interpolation points ${\rm i} \omega_r$ start not changing by much in consecutive
iterations. This results in the new expansion directions $\widetilde{V}_{r+1}$, $\widetilde{W}_{r+1}$ 
that are nearly contained in the existing subspaces ${\mathcal V}_r$, ${\mathcal W}_r$. In this
case, the orthonormalization in (\ref{eq:orthonormalize}) of $\widetilde{V}_{r+1}$, $\widetilde{W}_{r+1}$
with respect to existing $V_{r}$, $W_{r}$ suffers from cancellation type rounding errors.
Applying the orthonormalization in (\ref{eq:orthonormalize}) several times improves the accuracy,
and result in directions $\widetilde{V}_{r+1}$, $\widetilde{W}_{r+1}$ that are better orthonormalized
against $V_{r}$, $W_{r}$. In practice, we apply (\ref{eq:orthonormalize}) a few times, e.g.,
3-4 times, then orthonormalize the resulting $\widetilde{V}_{r+1}, \widetilde{W}_{r+1}$ via the Gram-Schmidt 
procedure, and take 
$V_{r+1}
		=
	\big[	
			V_r		\;\;		\widetilde{V}_{r+1}
	\big]
$,
$\, W_{r+1}
		=
	\big[	
			W_r		\;\;		\widetilde{W}_{r+1}
	\big]
$ 	
as the matrices whose columns form orthonormal bases for ${\mathcal V}_{r+1}$, ${\mathcal W}_{r+1}$.
In line \ref{R_orthogonalize_2} of Algorithm \ref{alg:refine_step}, the columns of $V_{r+1}$ and $W_{r+1}$ 
are similarly orthonormalized.
We ultimately use $V_{r+1}, W_{r+1}$ when forming the system $S_{r+1}$ 
in line \ref{form_rp1} of Algorithm \ref{alg:SM}.

\subsection{Solution of the Reduced ${\mathcal L}_\infty$-Norm Minimization Problem}
We use BFGS to minimize the reduced ${\mathcal L}_\infty$ objective 
${\mathcal F}_r(S^{\red})$ in line \ref{exp_start} of Algorithm \ref{alg:SM}.
To be precise, we have explored two alternatives here; a small variation of
a Matlab implementation of a line-search BFGS due to Michael L. Overton
making use of weak Wolfe conditions, and GRANSO \cite{CurMO2017}.
The former is only meant for unconstrained problems when we do not 
impose the asymptotic stability constraints described in Section \ref{sec:prac_issues},
whereas the asymptotic stability constraints in Section \ref{sec:prac_issues}
are also incorporated into this optimization when we use GRANSO. 

\subsection{Computation of the ${\mathcal L}_\infty$ Norm}
Algorithm \ref{alg:SM} in line \ref{large_linfinity} 
requires the computation of the ${\mathcal L}_\infty$ norm of a system whose order
is the sum of the order of the original system $S$ and $\sr$. If the original system does
not have large order, we use the built-in \texttt{norm} command in Matlab for these
${\mathcal L}_\infty$-norm computations. Otherwise, if the original system has large
order, we use the subspace framework introduced in \cite{Aliyev2017} for the
large-scale ${\mathcal L}_\infty$-norm computations. Additionally, the minimization
of the reduced ${\mathcal L}_\infty$ objective in line \ref{exp_start} via BFGS
requires small-scale ${\mathcal L}_\infty$-norm computations, which
we carry out using the \texttt{norm} command in Matlab.

\section{Numerical Results}\label{sec:numerical_exp}
In this section, we report the results of numerical experiments performed
with a Matlab implementation of Algorithm \ref{alg:SM} taking also the practical
issues indicated in the previous section into account. The first two subsections
\S\ref{sec:num_exp1} and \S\ref{sec:num_exp2} below concern experiments on 
rather smaller order systems, \S\ref{sec:num_exp3}
concerns experiments on a system of medium order, while the results of
experiments on several large-order systems are reported in \S\ref{sec:num_exp4}. 
All of the experiments are conducted in Matlab 2020b on an an 
iMac with Mac OS~12.1 operating system, Intel\textsuperscript{\textregistered} 
Core\textsuperscript{\texttrademark} i5-9600K CPU and 32GB RAM.

The numerical experiments are performed using the variation of the Matlab 
implementation of BFGS due to Michael L. Overton for the solution of the reduced 
${\mathcal L}_\infty$-norm minimization problems. Hence, the asymptotic stability
constraints are not imposed. The original systems in all of the experiments
in \S\ref{sec:num_exp1}-\ref{sec:num_exp3} concerning small- to medium-order
systems are asymptotically stable, and the computed optimal reduced 
systems in these examples also turn out to be asymptotically stable. The tolerance \texttt{tol} for termination 
(discussed in \S\ref{sec:termination}) is set equal to $10^{-8}$ in \S\ref{sec:num_exp1}-\ref{sec:num_exp3}, 
and $10^{-6}$ in \S\ref{sec:num_exp4}. 

For comparison or initialization purposes, some of the numerical experiments
involve the application of the balanced truncation for which we use the Matlab
toolbox MORLAB \cite{morBenW19b}, in particular the routine 
\texttt{ml}$\_$\texttt{ct}$\_$\texttt{dss}$\_$\texttt{bt} or
\texttt{ml}$\_$\texttt{ct}$\_$\texttt{ss}$\_$\texttt{bt} depending on
whether the system at hand is a descriptor system or more specifically
a linear time-invariant system. Moreover, the Hankel singular values
computed for smaller systems for comparison purposes are retrieved
by calling the built-in routine \texttt{hankelsv} in Matlab. As the first
three subsections concern the model reduction of relatively smaller systems,
the built-in routine \texttt{norm} is employed in line \ref{large_linfinity}
of Algorithm \ref{alg:SM} for ${\mathcal L}_\infty$-norm computations,
while the subspace framework in \cite{Aliyev2017} is employed for this purpose in
\S\ref{sec:num_exp4} that concerns the model reduction of descriptor systems with large order.

\subsection{ISS Example}\label{sec:num_exp1}
We start with the \texttt{iss} example of order $n = 270$
that is also considered when optimizing the objective 
${\mathcal F}$ directly in Section \ref{sec:direct_optimize}.
As before, we seek the nearest reduced descriptor system of order $\sr = 12$
with respect to the ${\mathcal L}_\infty$ norm.
An application of Algorithm \ref{alg:SM} with the initial estimate $S^{\red}_0$ 
produced by the balanced truncation
terminates when $r = 6$. The error ${\mathcal F}(S^\red_\star) = 0.0022516$
of the estimate $S^\red_\star$ returned is nearly half of the error ${\mathcal F}(S^\red_0) = 0.0044701$
of the initial estimate $S^\red_0$. The optimal reduced system $S^{\red}_\star$ is indeed slightly better than
the estimate returned by the direct optimization at which the objective ${\mathcal F}$ takes
the value $0.0024154$. Yet the total runtime is about 66 seconds, much shorter than 500
seconds, roughly the time required by the direct optimization. The local optimality of 
$S^\red_\star = (A^\red_\star , B^\red_\star , C^\red_\star , D^\red_\star , E^\red_\star)$
is apparent from Figure \ref{fig:sf_gd_converge}, which indicates an increase in the objective
${\mathcal F}$ if one of the entries of one of $A^\red_\star , B^\red_\star , C^\red_\star , D^\red_\star , E^\red_\star$
is modified. Moreover, the Hankel singular value $\sigma_{\sr+1}$ for this example,
a lower bound for the minimal error possible for any system of order $\sr$, is  
$0.0022353$ smaller than ${\mathcal F}(S^\red_\star) = 0.0022516$ only by a slim margin, 
so $S^\red_\star$ must be nearly optimal globally as well.

\begin{figure} 
 \centering
		\begin{tabular}{cc}
			 \subfigure[$(1,1)$, $A^{\red}$ \label{fig:1a_sf}]{\includegraphics[width = .47\textwidth]{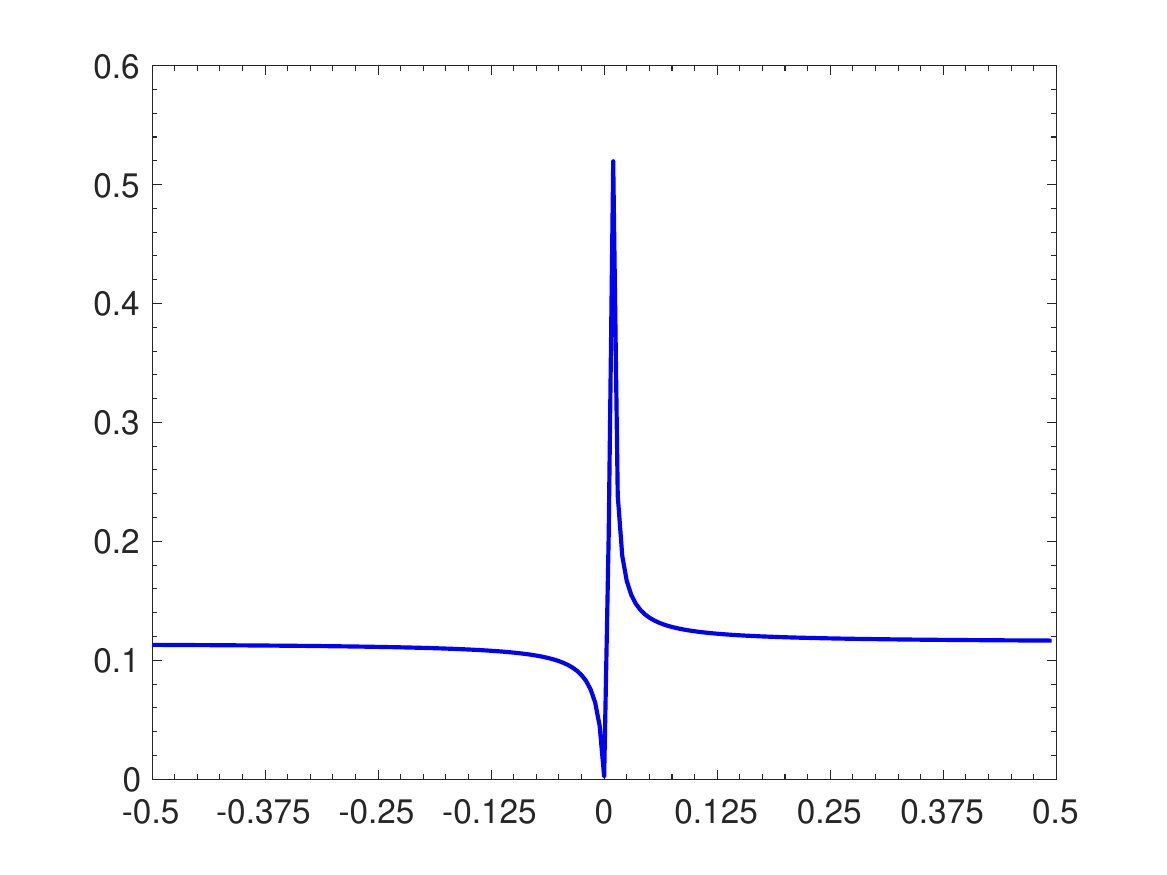}} & 		 
			 \subfigure[$(11,10)$, $A^{\red}$ \label{fig:1b_sf}]{\includegraphics[width = .47\textwidth]{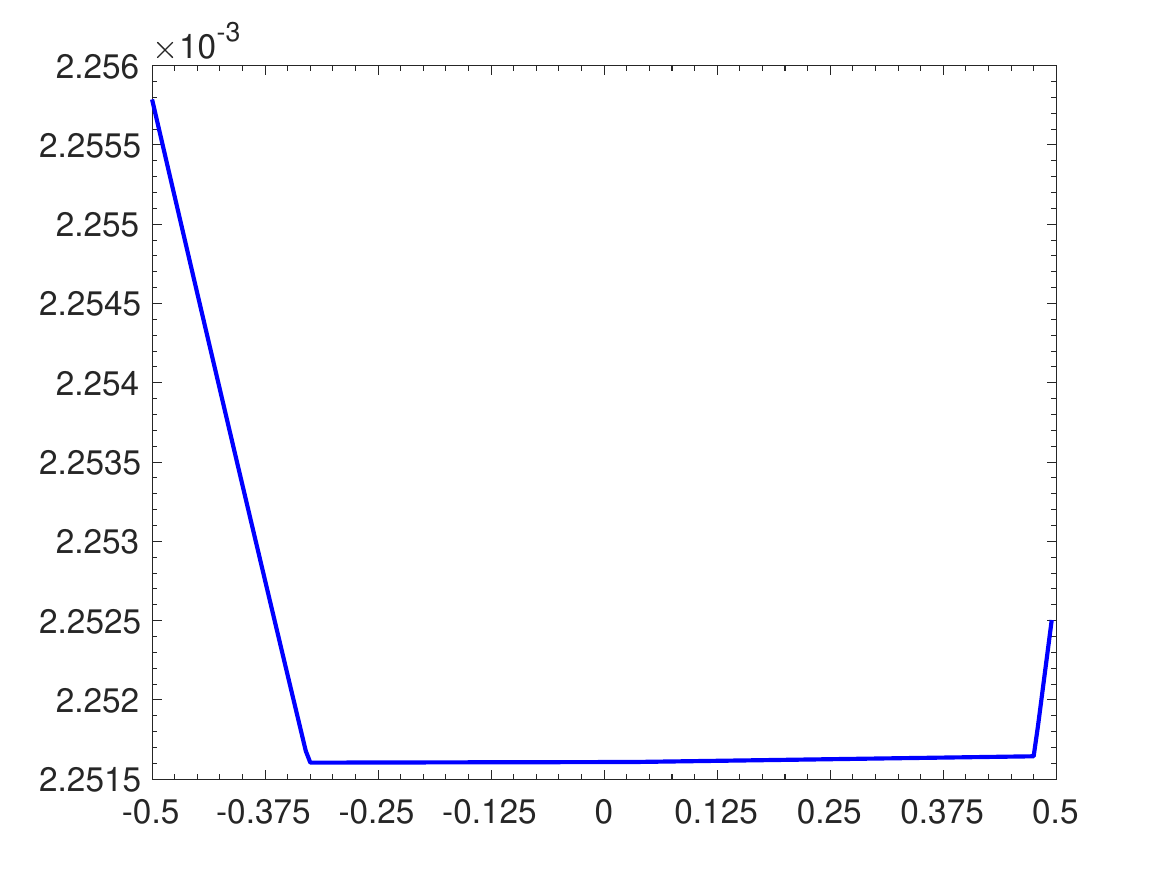}}
			 \\  
			 \subfigure[$(6,7)$, $A^{\red}$ \label{fig:1c_sf}]{\includegraphics[width = .47\textwidth]{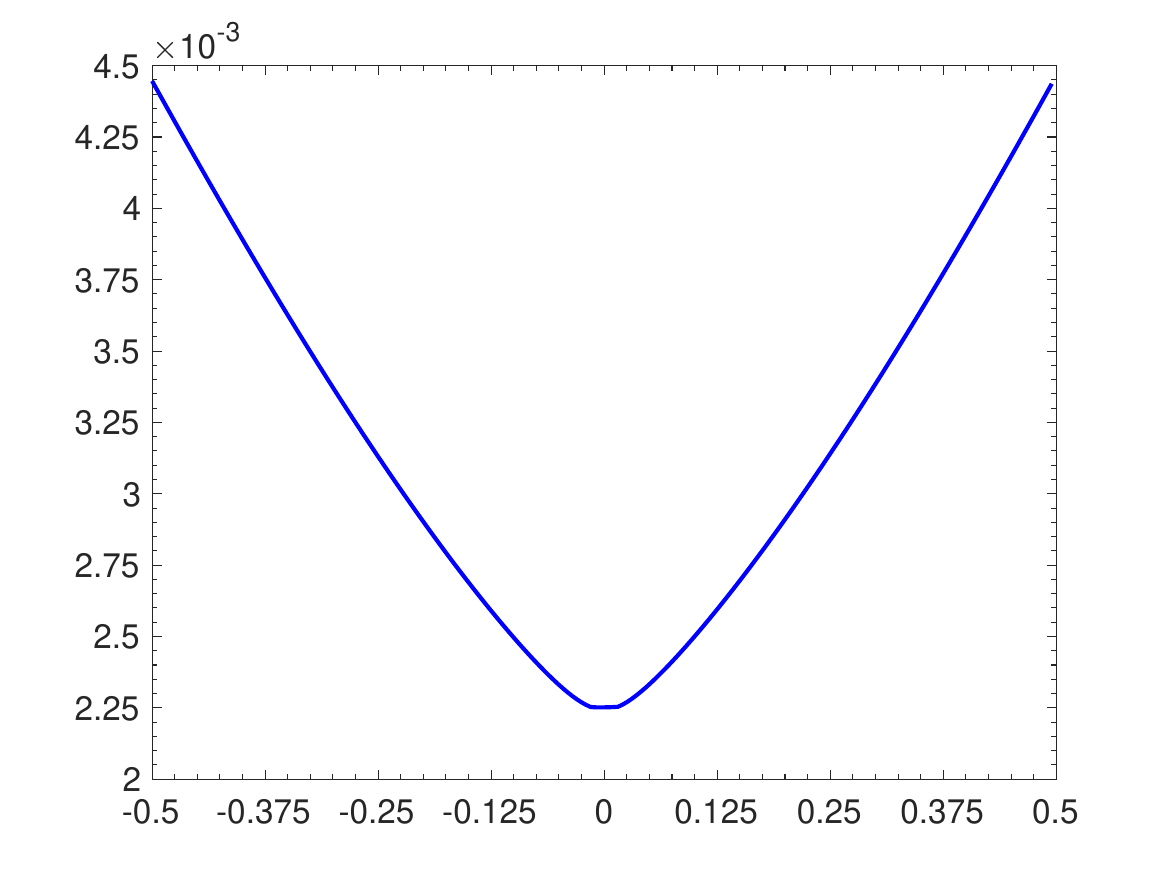}} &			 
			 \subfigure[$(1,1)$, $E^{\red}$ \label{fig:1d_sf}]{\includegraphics[width = .47\textwidth]{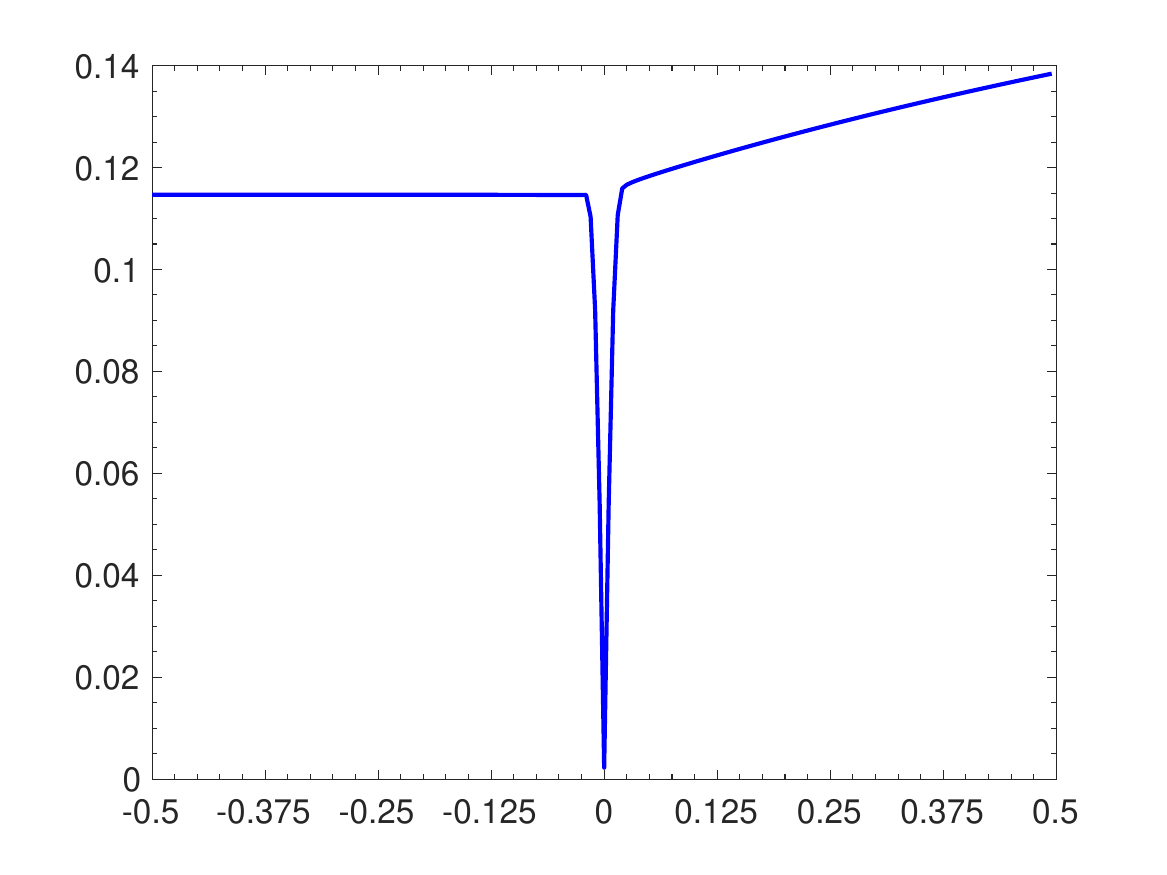}} 
			 \\
			 \subfigure[$(5,5)$, $E^{\red}$ \label{fig:1e_sf}]{\includegraphics[width = .47\textwidth]{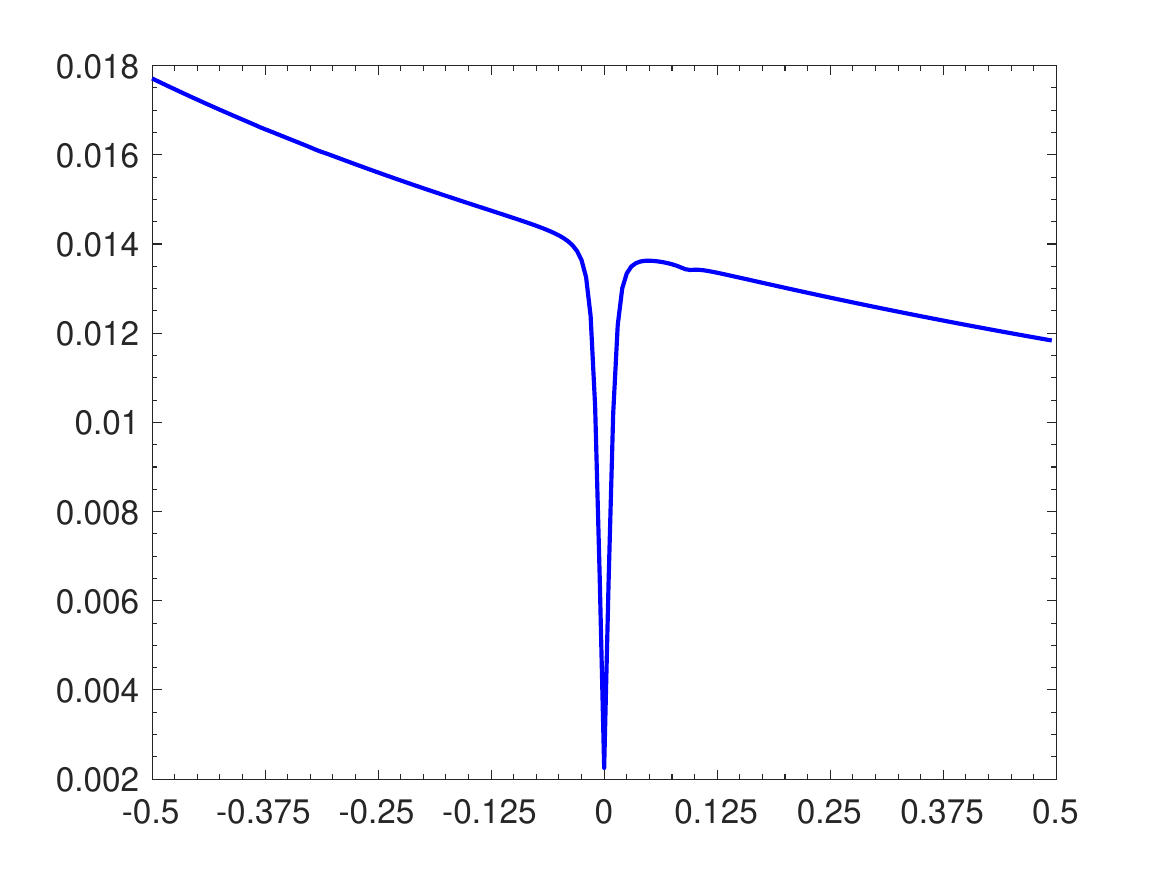}} &			 
			 \subfigure[$(8,3)$, $B^{\red}$ \label{fig:1f_sf}]{\includegraphics[width = .47\textwidth]{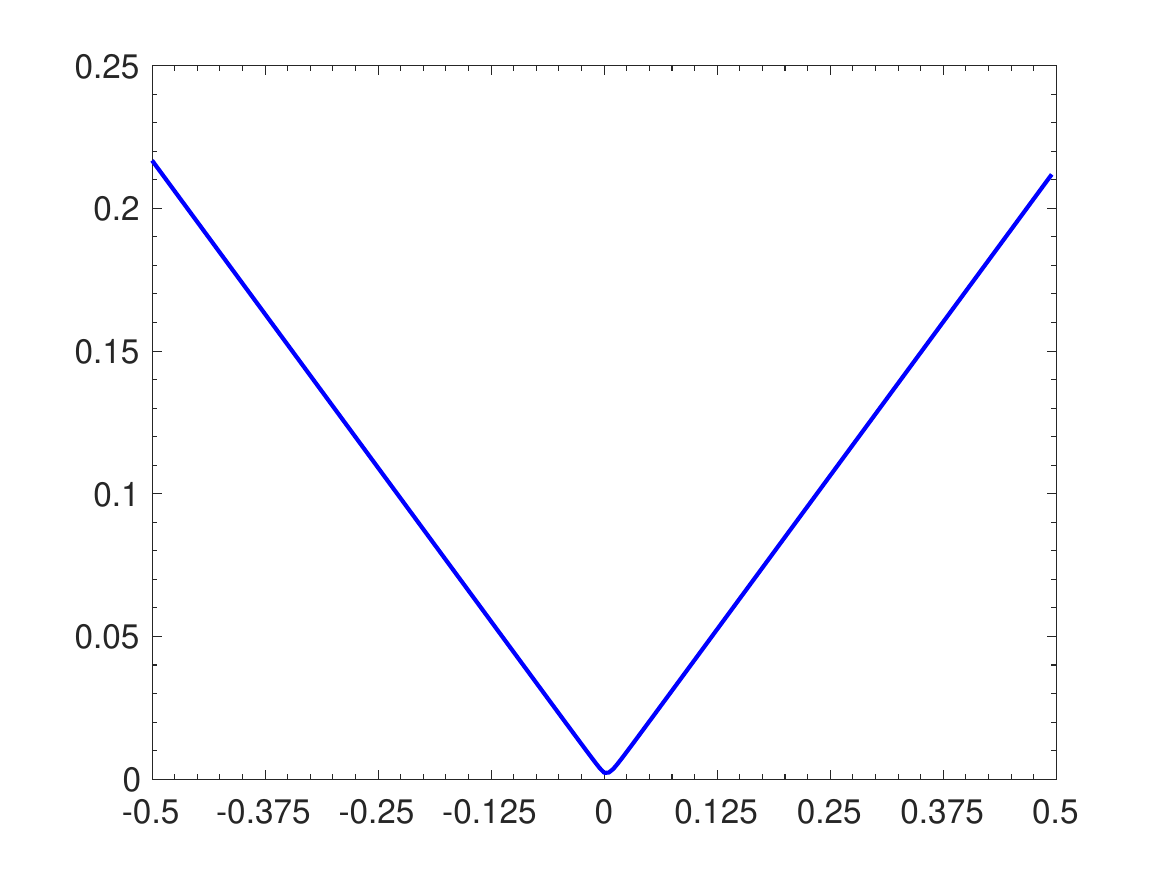}} 
			 \\
			 \subfigure[$(1,5)$, $C^{\red}$ \label{fig:1g_sf}]{\includegraphics[width = .47\textwidth]{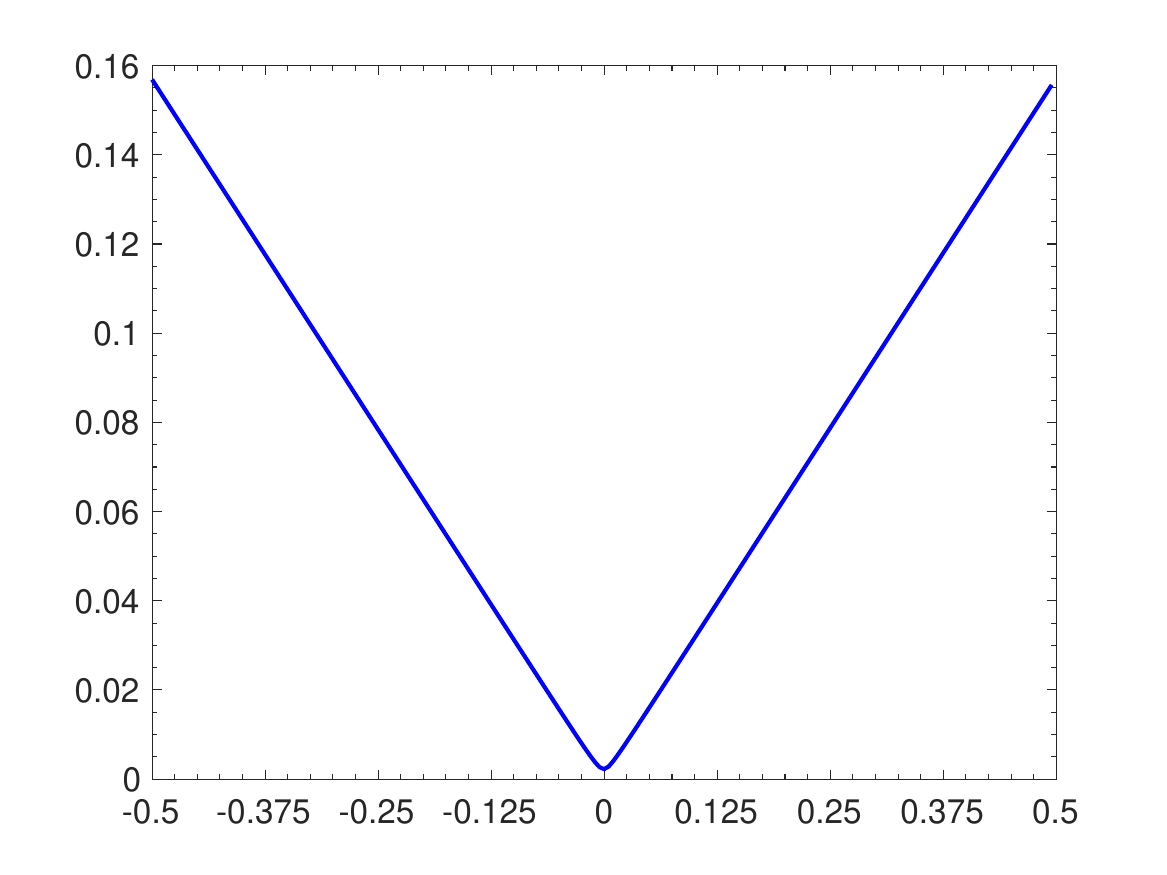}} &
			 \subfigure[$(3,3)$, $D^{\red}$ \label{fig:1h_sf}]{\includegraphics[width = .47\textwidth]{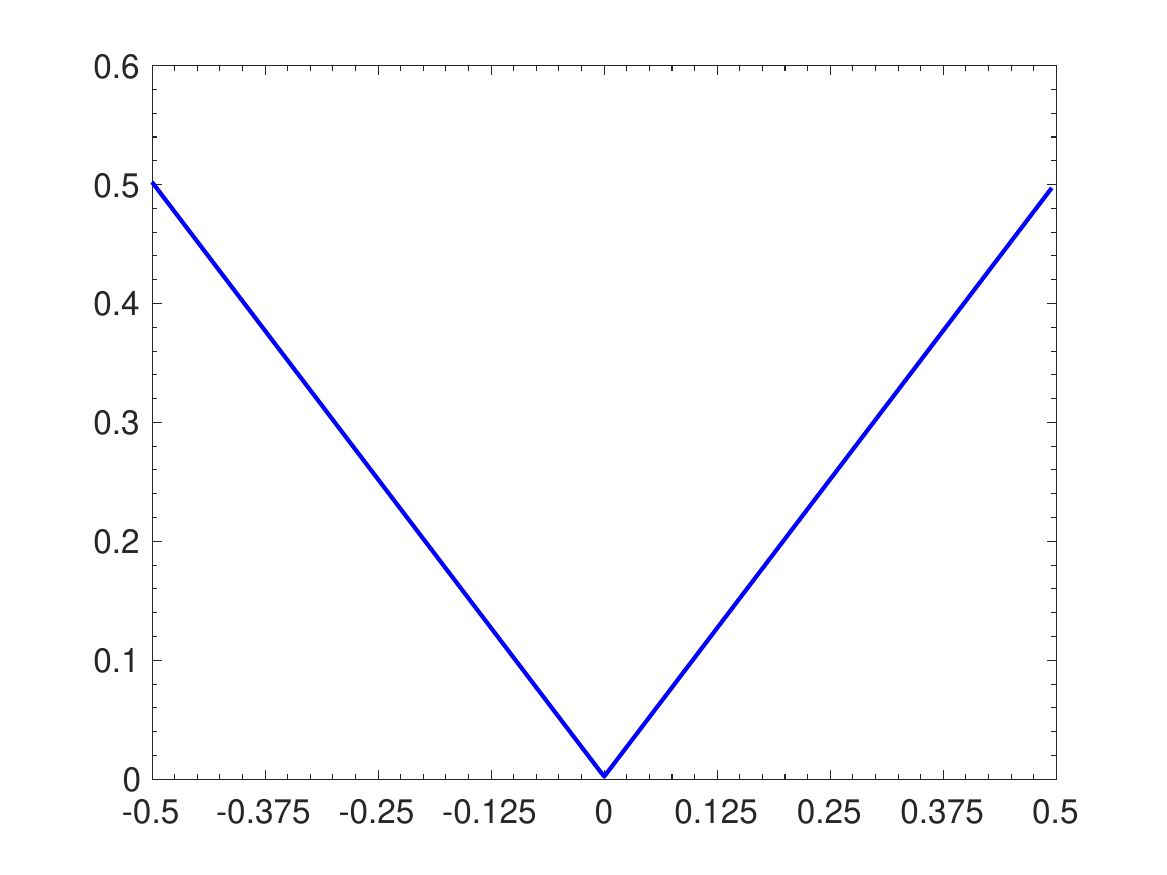}}		
		\end{tabular}   
		\caption{  
		The figure is similar to Figure \ref{fig:direct_gd_converge} and concerns the \texttt{iss} example with $\sr = 12$. 
		Only now the minimization of ${\mathcal F}$ 
		is performed using the subspace framework outlined in Algorithm \ref{alg:SM}. Specifically, each plot
		depicts ${\mathcal F}$ as a function of the variation of one of the entries of one of
		$A^{\red}$, $B^{\red}$, $C^{\red}$, $D^{\red}$, $E^{\red}$. Zero variation corresponds to the
		optimal reduced system by Algorithm \ref{alg:SM}.}
		\label{fig:sf_gd_converge}
\end{figure}

The largest singular values of the errors $\sigma_{\max}( H({\rm i} \omega) - H({\rm i} \omega ; S^{\red}_0))$ 
and $\sigma_{\max}( H({\rm i} \omega) - H({\rm i} \omega ; S^{\red}_\star))$ of the initial
estimate $S^\red_0$ and the optimal estimate $S^\red_\star$ are plotted
as functions of $\omega$ in Figure \ref{fig:iss_error}. The singular value error function
$\sigma_{\max}( H({\rm i} \omega) - H({\rm i} \omega ; S^{\red}_\star))$ for the optimal
$S^\red_\star$ is extremely flat, as indeed
$\sigma_{\max}( H({\rm i} \omega) - H({\rm i} \omega ; S^{\red}_\star)) \in
    [2.02 \cdot 10^{-3}  \, ,  \, 2.26 \cdot 10^{-3}]$ at all $\omega$. Furthermore,
the error $\sigma_{\max}( H({\rm i} \omega) - H({\rm i} \omega ; S^{\red}_\star))$
is maximized at four distinct points marked by the circles on the right-hand plot. 
This indicates that the objective ${\mathcal F}$
is not differentiable at the computed optimizer $S^\red_\star$.

\begin{figure}
\begin{tabular}{cc}
	\hskip -3ex
	\includegraphics[width = .53\textwidth]{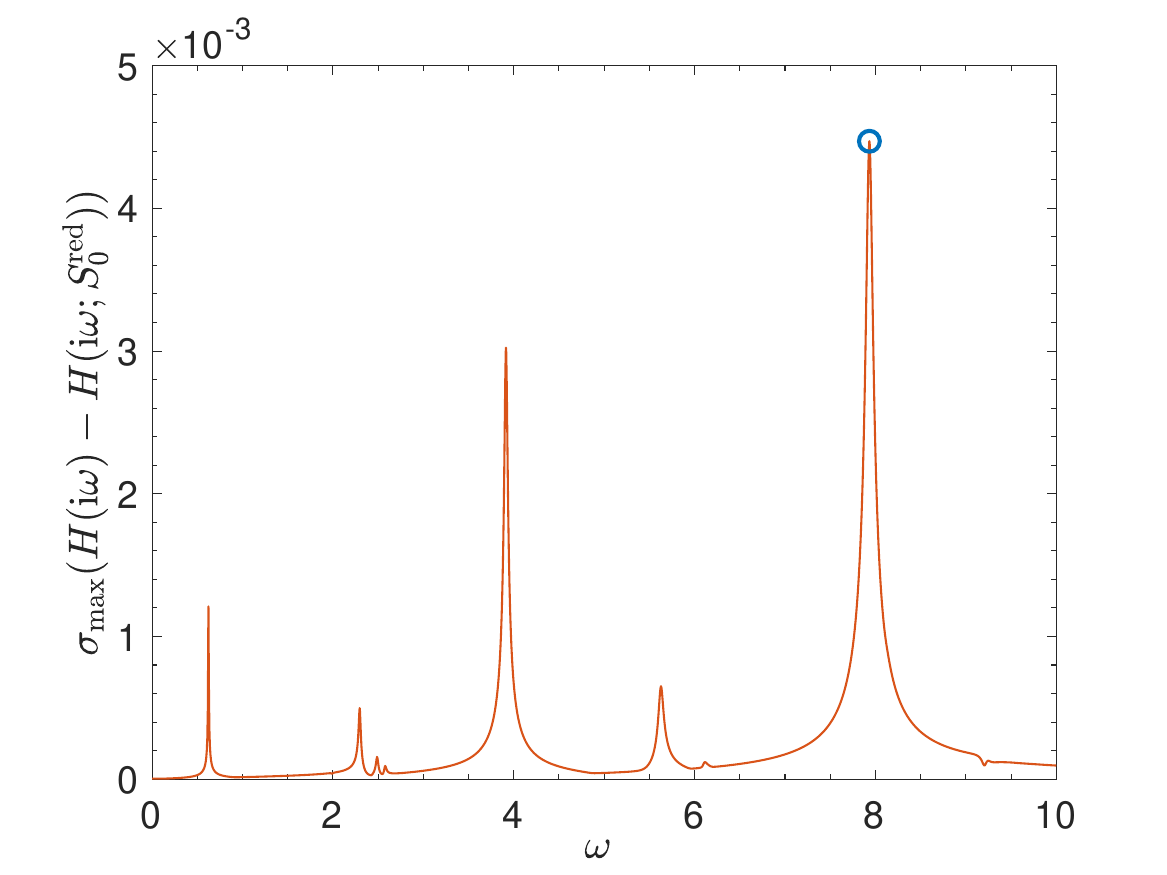}
				&
	\hskip -2ex
	\includegraphics[width = .53\textwidth]{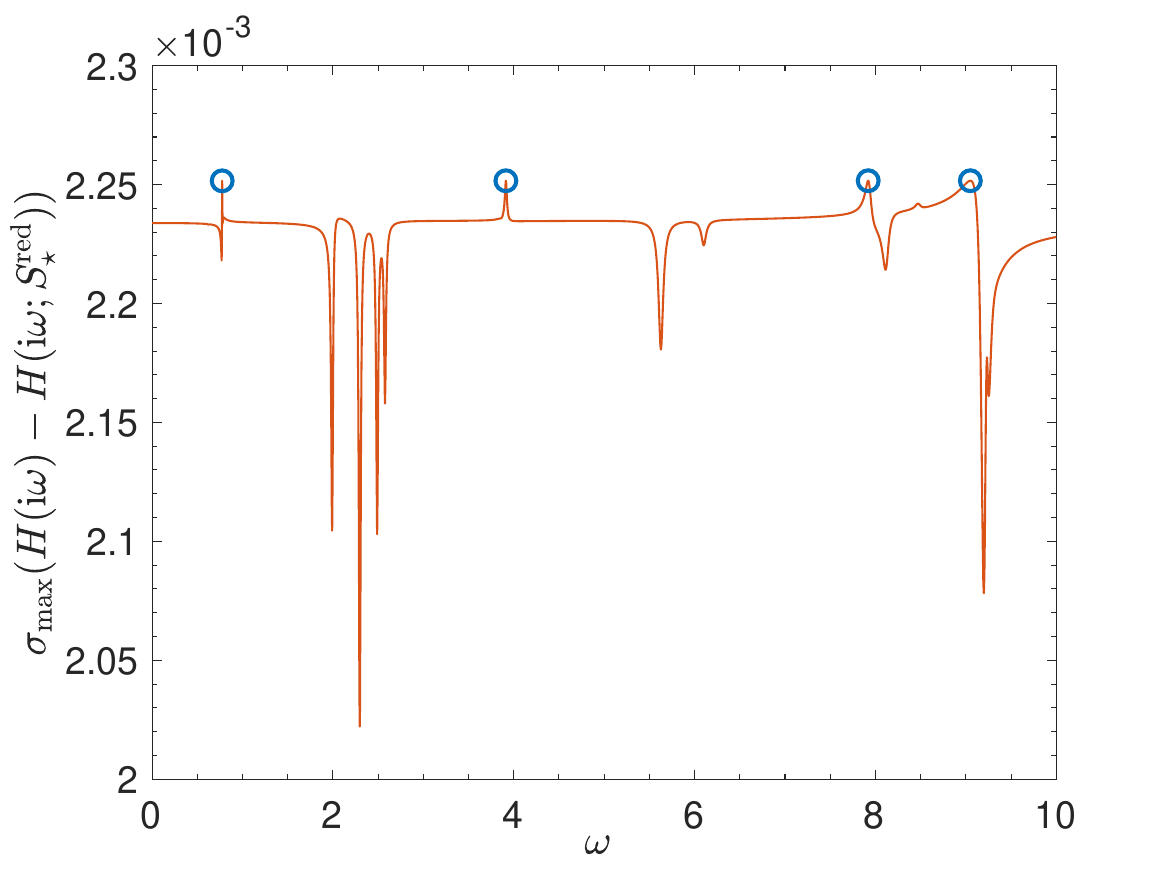}
\end{tabular}
\caption{ The plots of $\sigma_{\max}( H({\rm i} \omega) - H({\rm i} \omega ; S^{\red}_0))$ (left)
and $\sigma_{\max}( H({\rm i} \omega) - H({\rm i} \omega ; S^{\red}_\star))$ (right)
as functions of $\omega$ for the \texttt{iss} example with $\sr = 12$,
where $S^{\red}_0$ is the initial estimate, and $S^{\red}_\star$ is the optimal estimate computed
by Algorithm \ref{alg:SM}. In each plot, the circles mark the points where the largest singular value
function attains the largest value.}
\label{fig:iss_error}
\end{figure}

Information about the progress of Algorithm \ref{alg:SM} is given in Table \ref{table:iss_iterates}.
We start with the reduced system $S_0$ of order 36 that interpolates the original system $S$
of order 270 at three points on the imaginary axis, namely the imaginary parts of the
most dominant three poles of $S$. At every iteration, if no refinement step is performed,
the order of the reduced system $S_r$ increases by $4m = 12$. Additionally, each refinement
step results in an increase of $4m = 12$ in the order of $S_r$. We observe in the
first column that the error ${\mathcal F}(S^\red_r)$ at the minimizer $S^\red_r$ of the
reduced objective ${\mathcal F}_r$ decays rapidly with respect to $r$.  Total number
of objective function evaluations is 492 (i.e., the sum of the function evaluations in the
fifth columns), however the ${\mathcal L}_\infty$ objective to be minimized involves the reduced
system $S_r$ rather than the full system $S$. For instance, the number of
${\mathcal L}_\infty$-norm computations performed are 105, 155, 123 at
iterations $r = 1, 2, 3$. Yet, these ${\mathcal L}_\infty$-norm computations
involve the reduced system $S_r$ of order 72, 84, 120 for $r = 1, 2, 3$.
Observe that the number of bfgs iterations eventually decrease at the later iterations,
as the computed optimal $S^\red_r$ used as the initial estimate
when minimizing ${\mathcal F}_{r+1}$ becomes stationary, i.e., as the computed
minimizer $S^\red_r$ of ${\mathcal F}_r$ is also close to a minimizer of ${\mathcal F}_{r+1}$. 
Refinement steps are needed only at the initial iteration when $r = 0$ and when $r = 2$.
No refinement step turns out to be necessary at the later iterations. This is a generic pattern
which we observe in vast majority of examples we have experimented on.

\begin{table}
\begin{center}
\begin{tabular}{c|ccccc}
	$r$	&	${\mathcal F}(S^\red_r)$	&	$\;$ red order $\;$	&	$\; \#$ bfgs iter $\;$	&	$\; \#$ fun evals $\;$ 	& $\; \#$ refine $\;$ 	\\[.2em]
\hline
0		&	0.004470060020		&	36		&		 ---		&	 ---		&		2	\\
1		&	0.003517059977		&	72		&		38		&	105		&		0	\\
2		&	0.002259657400		&	84		&		45		&	155		&		2	\\
3		&	0.002252138011		&	120		&		35		&	123		&		0	\\
4		&	0.002251613679		&	132		&		11		&	48		&		0	\\
5		&	0.002251609387		&	144		&		2		&	29		&		0	\\
6		&	0.002251607779		&	156		&		2		&	32		&		---	\\[.2em]	
\end{tabular}
\end{center}
\caption{ The iterates and information about the progress of Algorithm \ref{alg:SM} on
the \texttt{iss} example with $\sr = 12$. The columns of red order, $\#$ bfgs iter, $\#$ fun evals, and $\#$
refine list the order of the system $S_r$, number of bfgs iterations, number of objective
function evaluations performed by bfgs, and number of refinement steps performed at
the $r$th iteration.}
\label{table:iss_iterates}
\end{table}

\subsection{CD Player Model}\label{sec:num_exp2}
Our next example is the CD player model which is available in the SLICOT library.
The model is a linear-time invariant system of order $n = 120$ and with $m = 2$
inputs and $p = 2$ outputs. The details of the model can be found in \cite{ChaVD2005}, 
and the references therein. Our primary purpose here is to compare on this example
Algorithm \ref{alg:SM} with the approach in \cite{FlaBG2013} for ${\mathcal H}_\infty$ model reduction
based on rank-one modifications of the system matrices. As the approach in \cite{FlaBG2013}
is for SISO systems, the results are reported over there for this example but with
only the second input and the first output. We follow the same practice here when
applying our approach. The initial estimate $S^{\red}_0$ for a minimizer for Algorithm \ref{alg:SM} 
is constructed  using the balanced truncation. Moreover, the initial reduced system $S_0$
is of order 12, and is constructed so that it interpolates the full system $S$ at the imaginary
parts of its most dominant three poles.

The reduced systems $S^{\red}_\star$ of order $\sr = 2, 4, 6, 8, 10$ are computed using 
Algorithm \ref{alg:SM}. Table \ref{table:cdplayer_rel_errors} lists the relative errors
$\| H - H( \cdot \,\, ; \, S^{\red}_\star) \|_{{\mathcal L}_\infty} / \| H \|_{{\mathcal L}_\infty}$
for the reduced system $S^\red_\star$ computed by various approaches. In particular,
the columns of IHA, MBT, HNA are the reported results in \cite[Table 4]{FlaBG2013}
by using the approach over there initialized with the model returned by IRKA, initialized
with the model returned by the balanced truncation, and the best Hankel norm approximation.
Moreover, the columns of BT and Lower Bnd correspond to the relative error of
the reduced model by the balanced truncation, and the theoretical lower bound
$\sigma_{\sr + 1} /  \|  H \|_{{\mathcal L}_\infty}$ for any reduced system of order $\sr$
for the relative error, where $\sigma_{\sr + 1}$ is the $\sr+1$th largest Hankel singular
value of the system. As can be seen in Table \ref{table:cdplayer_rel_errors}, our approach
produces reduced systems with smaller errors compared to those produced by other
approaches in all cases. The reduced systems produced by Algorithm \ref{alg:SM}
does not seem far away from global optimality either, as their errors are slightly
greater than the theoretical lower bounds in terms of the Hankel singular values in the
last column.

We give some details of Algorithm \ref{alg:SM} applied to find a reduced system of order
$\sr = 8$ in Figures \ref{fig:cdplayer_sf_gd_converge} and \ref{fig:cdplayer_error}, as well as
in Table \ref{table:cdplayer_iterates}. In particular, Figure \ref{fig:cdplayer_sf_gd_converge}
confirms that the reduced system $S^\red_\star$ by Algorithm \ref{alg:SM} is locally optimal,
i.e., small variations in the entries of the system matrices cause increase in the ${\mathcal L}_\infty$
error objective. Figure \ref{fig:cdplayer_error} displays the error 
$\sigma_{\max}( H({\rm i} \omega) - H({\rm i} \omega ; S^{\red}_0))$ of the initial model,
and the error $\sigma_{\max}( H({\rm i} \omega) - H({\rm i} \omega ; S^{\red}_\star))$
of the model by Algorithm \ref{alg:SM} as functions of $\omega$. Once again the error function
for the optimal model $S^\red_\star$ is flatter, even if it is not as pronounced as for the
\texttt{iss} example, compared to that for the initial model $S^\red_0$. The error function
$\sigma_{\max}( H({\rm i} \omega) - H({\rm i} \omega ; S^{\red}_\star))$ for the optimal
model attains its maximum at five different $\omega$ values, which implies that the objective
${\mathcal F}$ is not smooth at $S^\red_\star$. As displayed in Table  \ref{table:cdplayer_iterates},
the convergence occurs again quite rapidly; indeed four iterations are sufficient to reach
prescribed accuracy and terminate. At each iteration, the order of the reduced system 
increases by $4m = 4$. Additionally, the refinement step performed in the initial iteration
causes also an increase of $4m = 4$ in the order of the reduced system. Larger number 
of bfgs iterations are needed at iterations with $r = 1, 2$, when
the objective involves reduced systems of order $20$, $24$, respectively. The total
runtime is around 15 seconds, and the relative error at termination is 
$( {\mathcal F}(S^\red_\star) := \| H - H( \cdot \,\, ; \, S^{\red}_\star) \|_{{\mathcal L}_\infty}) /  \| H \|_{{\mathcal L}_\infty}
  =  ( 2.87\times 10^{-1} ) / ( 6.87 \times 10^1 ) = 4.18 \times 10^{-3}$.


\begin{table}
\begin{center}
\begin{tabular}{c|cccccc}
$\sr$	&	Alg. \ref{alg:SM}	&	IHA	&	MBT 	&	HNA	 	&  BT	&	Lower Bnd	\\[.2em]
\hline
2		&	$3.12\times 10^{-1}$		&	$3.66 \times 10^{-1}$		&		$3.68\times 10^{-1}$		&	 $3.35\times 10^{-1}$	&	$3.69\times 10^{-1}$		&  $1.95\times 10^{-1}$	\\
4		&	$1.82\times 10^{-2}$		&	$2.14 \times 10^{-2}$ 		&		$2.25\times 10^{-2}$		&	$2.00\times 10^{-2}$		&	$2.25\times 10^{-2}$		&  $1.13\times 10^{-2}$	\\
6		&	$9.44\times 10^{-3}$		&	$1.04\times 10^{-2}$		&		$1.19\times 10^{-2}$		&	$1.23\times 10^{-2}$		&	$1.23\times 10^{-2}$		&  $6.79\times 10^{-3}$	\\
8		&	$4.18\times 10^{-3}$		&	$4.85\times 10^{-3}$		&		$6.40\times 10^{-3}$		&	$5.99\times 10^{-3}$		&	$6.41\times 10^{-3}$			&  $3.20\times 10^{-3}$	\\
10		&	$7.45\times 10^{-4}$		&	$8.99\times 10^{-4}$		&		$1.24\times 10^{-3}$		&	$1.08\times 10^{-3}$		&	$1.32\times 10^{-3}$		&  $5.86\times 10^{-4}$ \\[.2em]
\end{tabular}
\end{center}
\caption{ This table concerns the ``cd player model''.
Relative errors $\| H - H( \cdot \,\, ; \, S^{\red}_\star) \|_{{\mathcal L}_\infty} / \| H \|_{{\mathcal L}_\infty}$ are listed for the optimal
estimate $S^{\red}_\star$ computed by various methods for finding reduced systems of order $\sr = 2, 4, 6, 8, 10$, as well
as the lower bound $\sigma_{\sr +1} / \|  H \|_{{\mathcal L}_\infty}$.
}
\label{table:cdplayer_rel_errors}
\end{table}

\begin{figure} 
 \centering
		\begin{tabular}{cc}
			 \subfigure[$(2,2)$, $A^{\red}$ \label{fig:cd_1a_sf}]{\includegraphics[width = .47\textwidth]{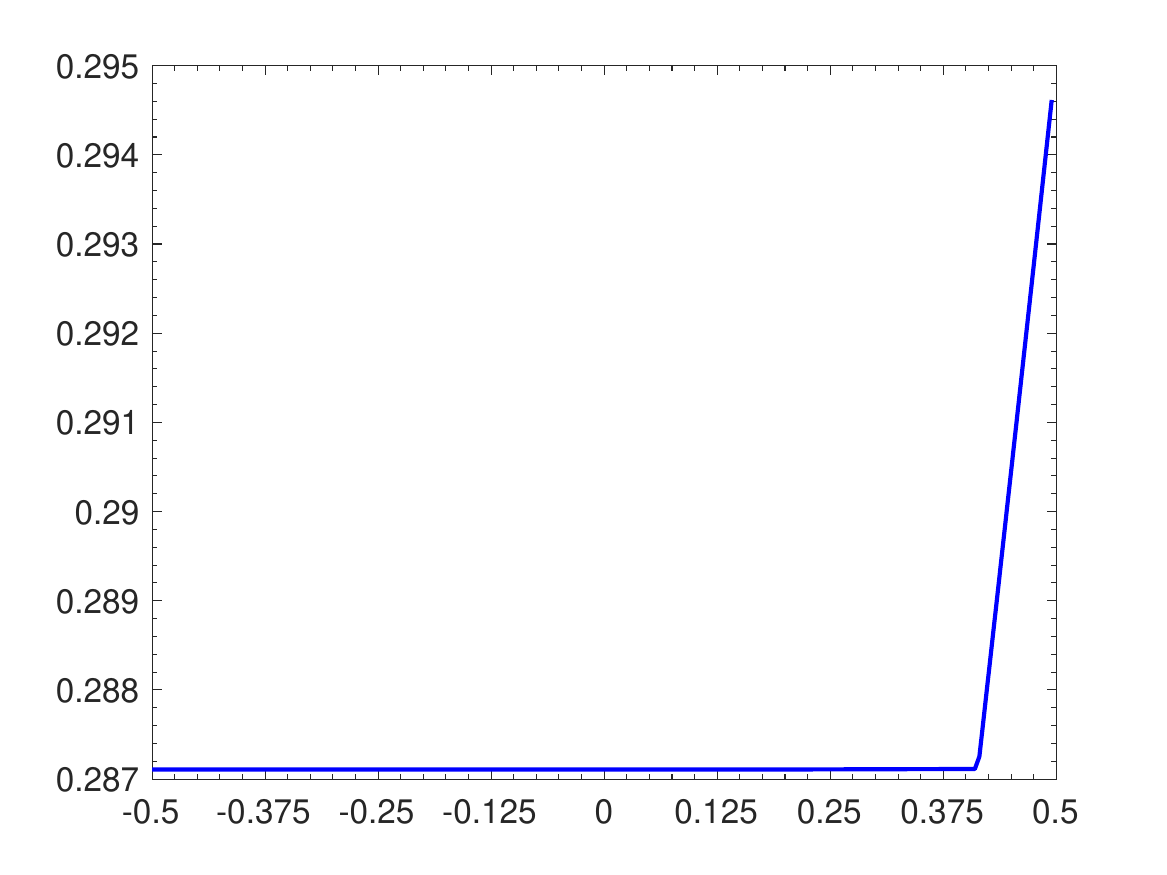}} & 		 
			 \subfigure[$(4,3)$, $A^{\red}$ \label{fig:cd_1b_sf}]{\includegraphics[width = .47\textwidth]{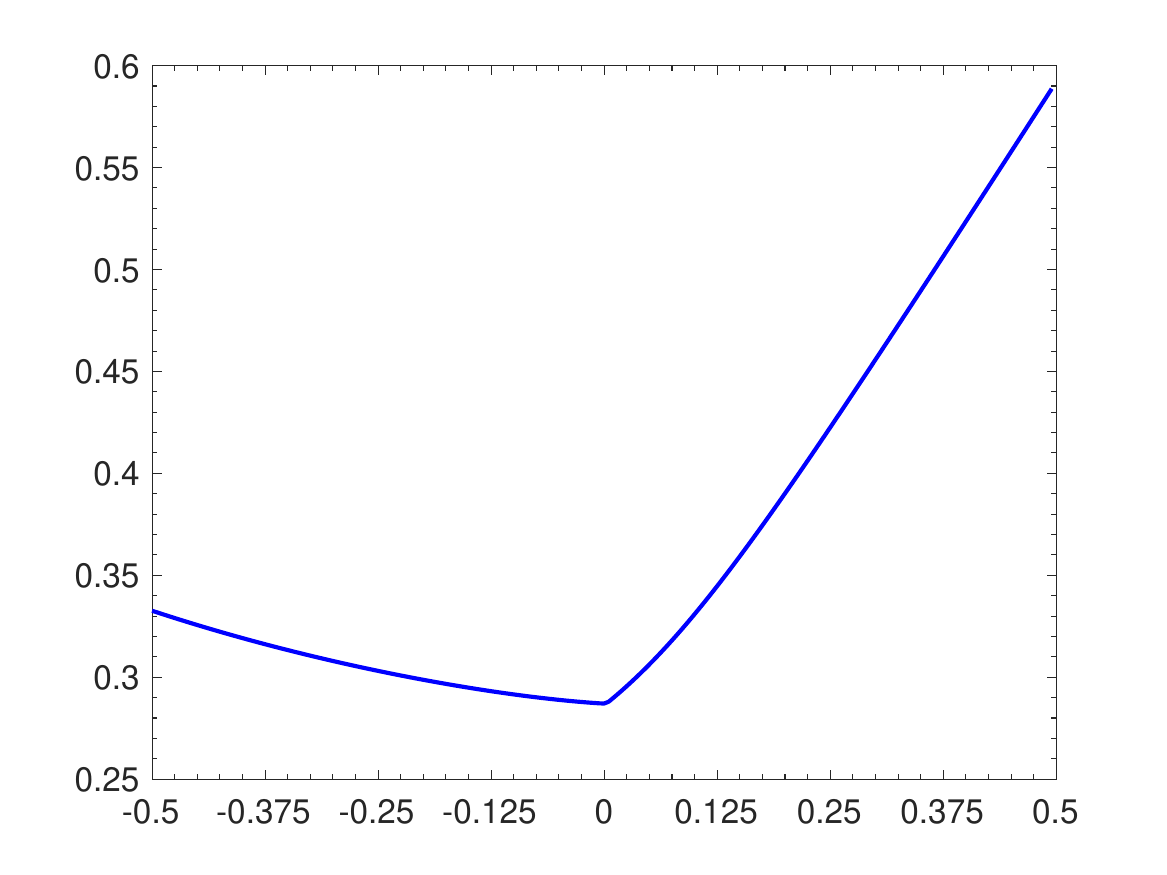}}
			 \\  
			 \subfigure[$(2,3)$, $A^{\red}$ \label{fig:cd_1c_sf}]{\includegraphics[width = .47\textwidth]{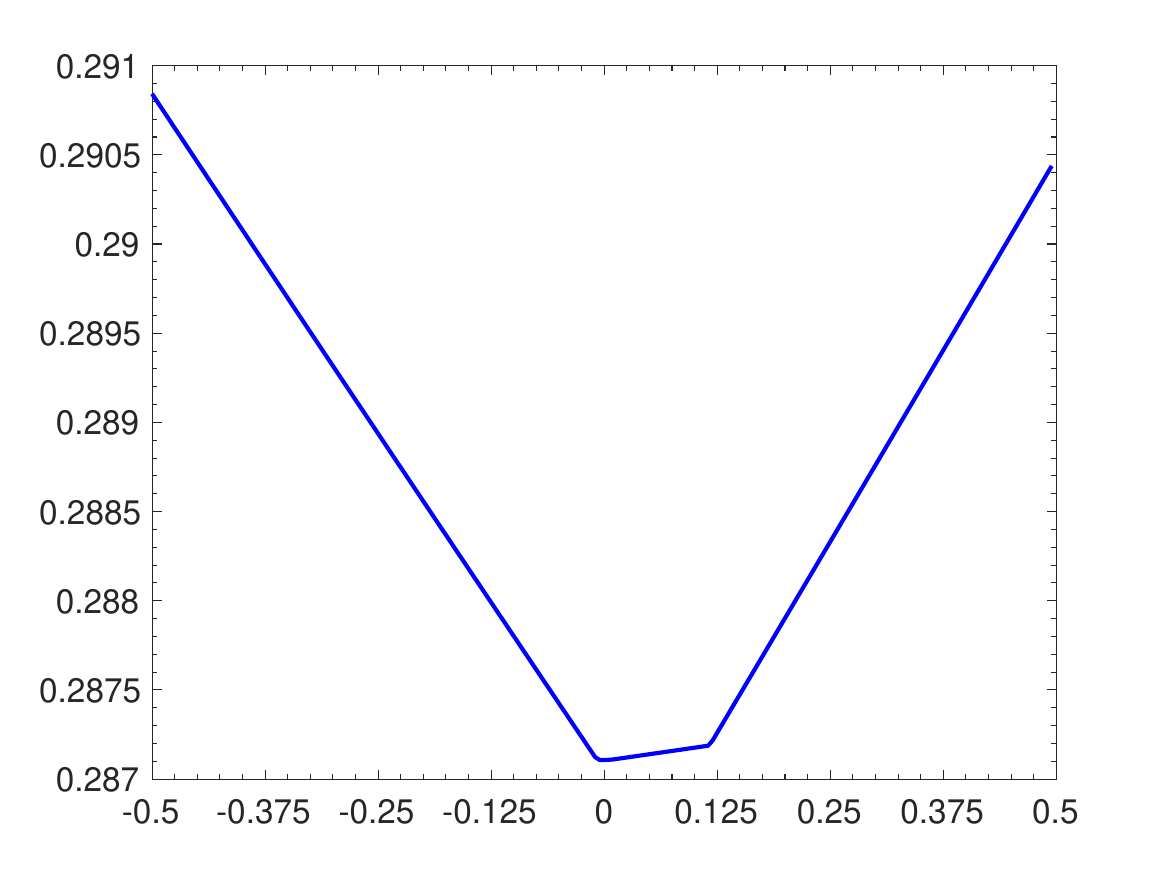}} &			 
			 \subfigure[$(1,1)$, $E^{\red}$ \label{fig:cd_1d_sf}]{\includegraphics[width = .47\textwidth]{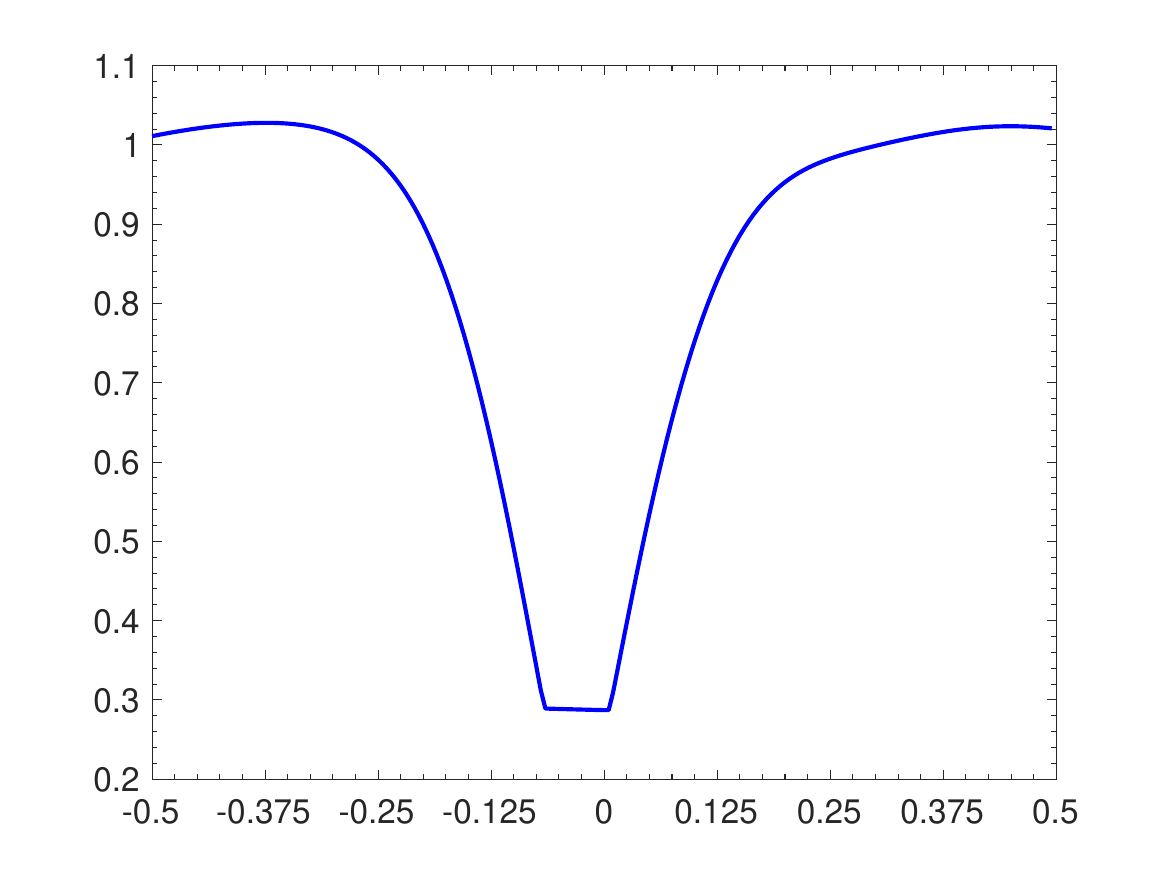}} 
			 \\
			 \subfigure[$(8,8)$, $E^{\red}$ \label{fig:cd_1e_sf}]{\includegraphics[width = .47\textwidth]{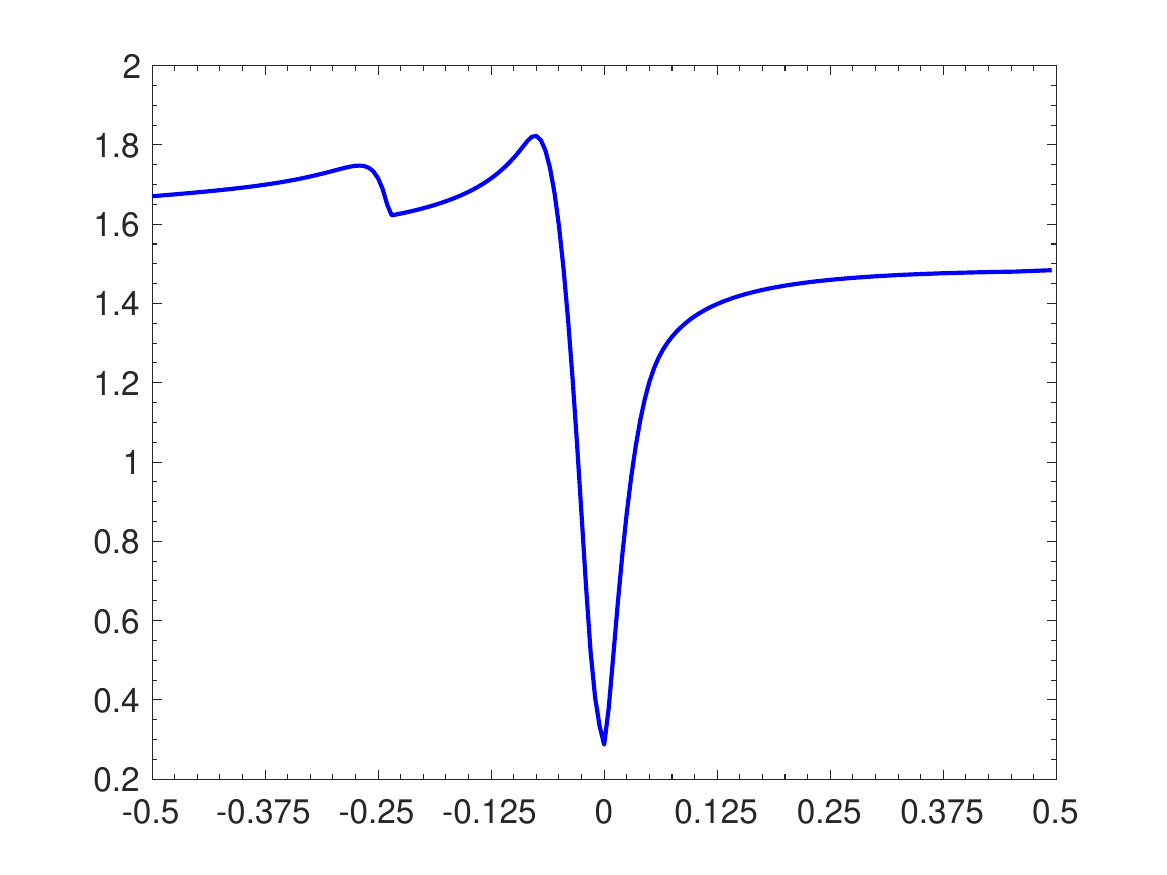}} &			 
			 \subfigure[$(8,1)$, $B^{\red}$ \label{fig:cd_1f_sf}]{\includegraphics[width = .47\textwidth]{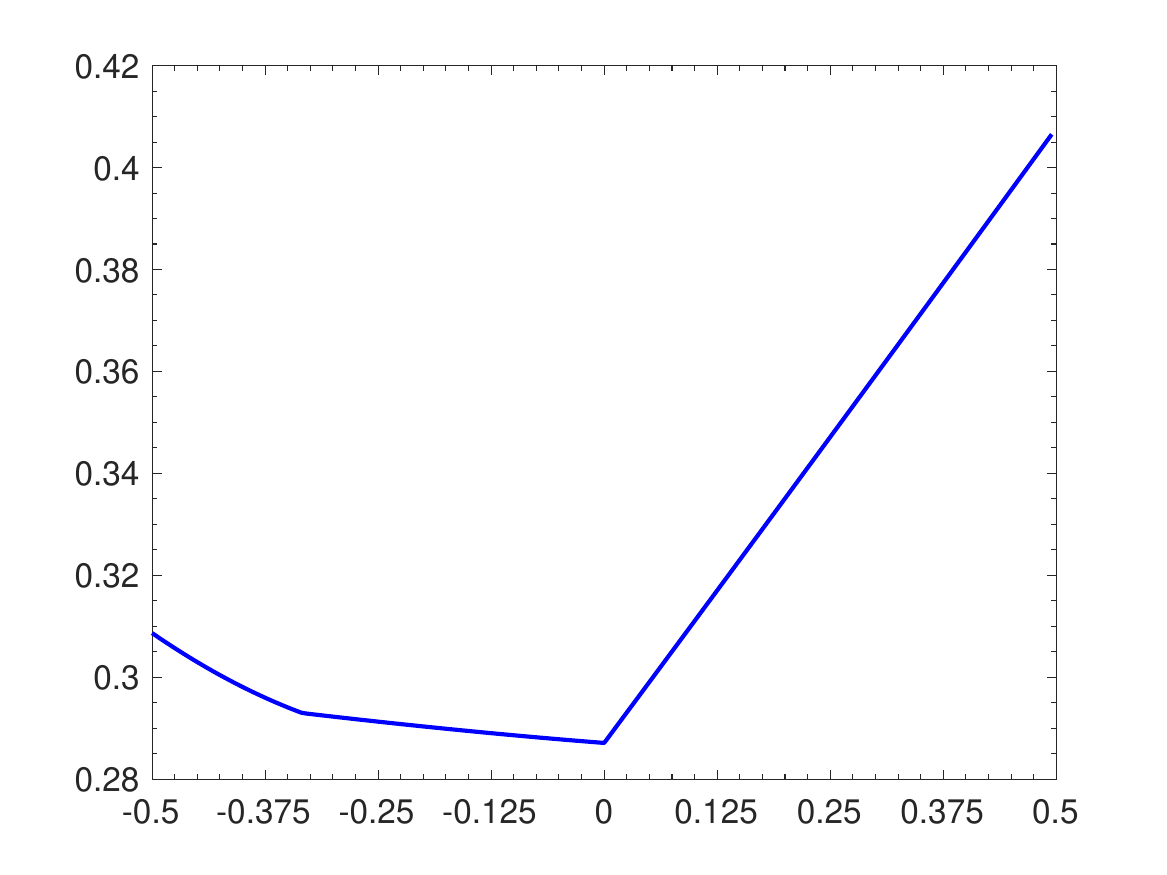}} 
			 \\
			 \subfigure[$(1,3)$, $C^{\red}$ \label{fig:cd_1g_sf}]{\includegraphics[width = .47\textwidth]{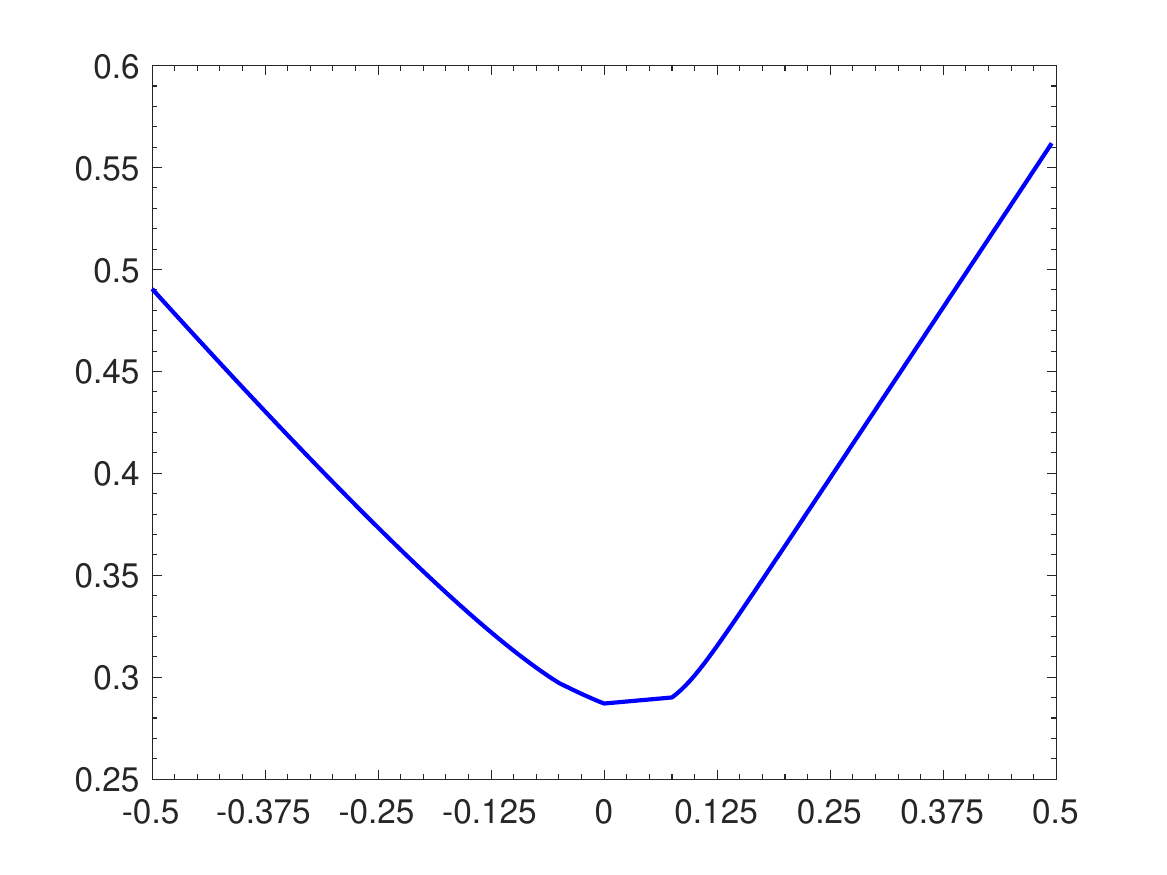}} &
			 \subfigure[$(1,1)$, $D^{\red}$ \label{fig:cd_1h_sf}]{\includegraphics[width = .47\textwidth]{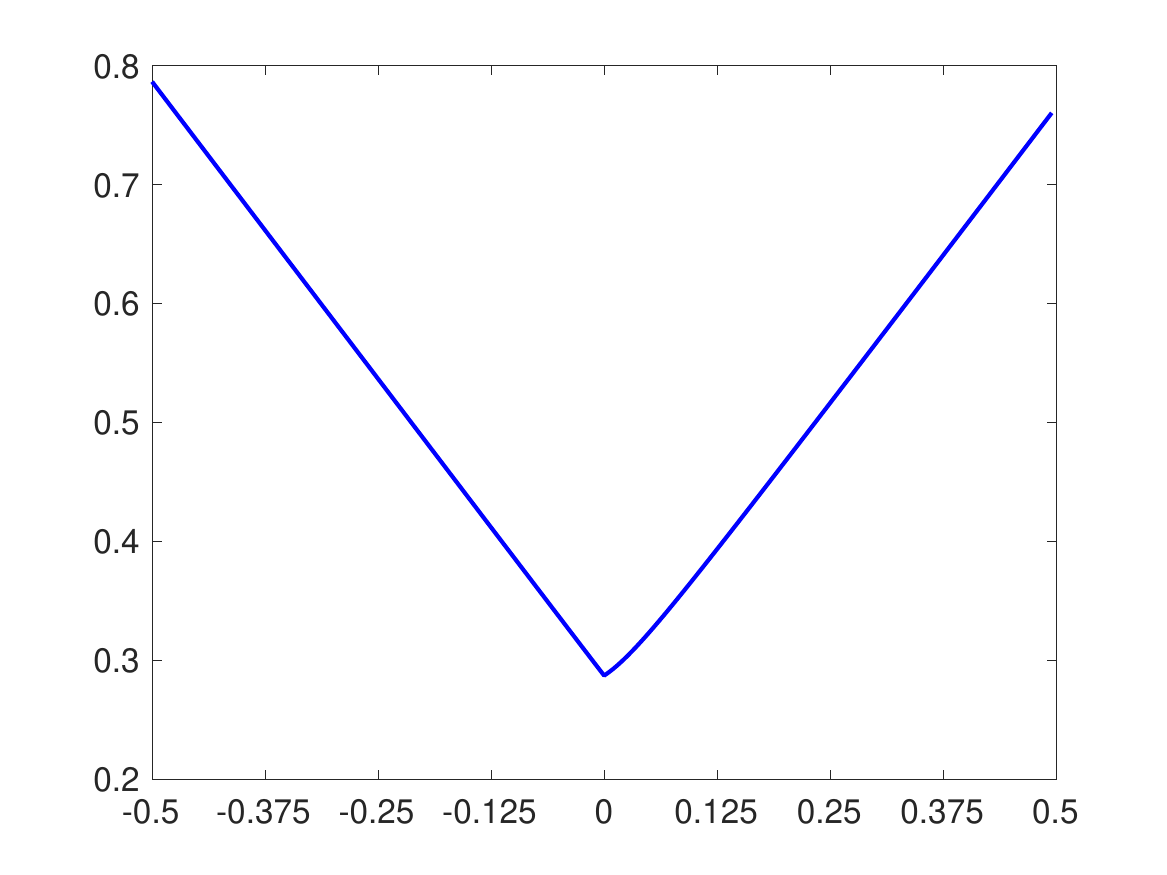}}		
		\end{tabular}   
		\caption{  
		The figure is analogous to Figure \ref{fig:direct_gd_converge}, but concerns the ``cd player model''
		with $\sr = 8$. 
		Each plot
		depicts ${\mathcal F}$ as a function of the variation of one of the entries of one of
		$A^{\red}$, $B^{\red}$, $C^{\red}$, $D^{\red}$, $E^{\red}$. Zero variation corresponds to the
		optimal reduced system by Algorithm \ref{alg:SM}.}
		\label{fig:cdplayer_sf_gd_converge}
\end{figure}

\begin{figure}
\begin{tabular}{cc}
	\hskip -3ex
	\includegraphics[width = .53\textwidth]{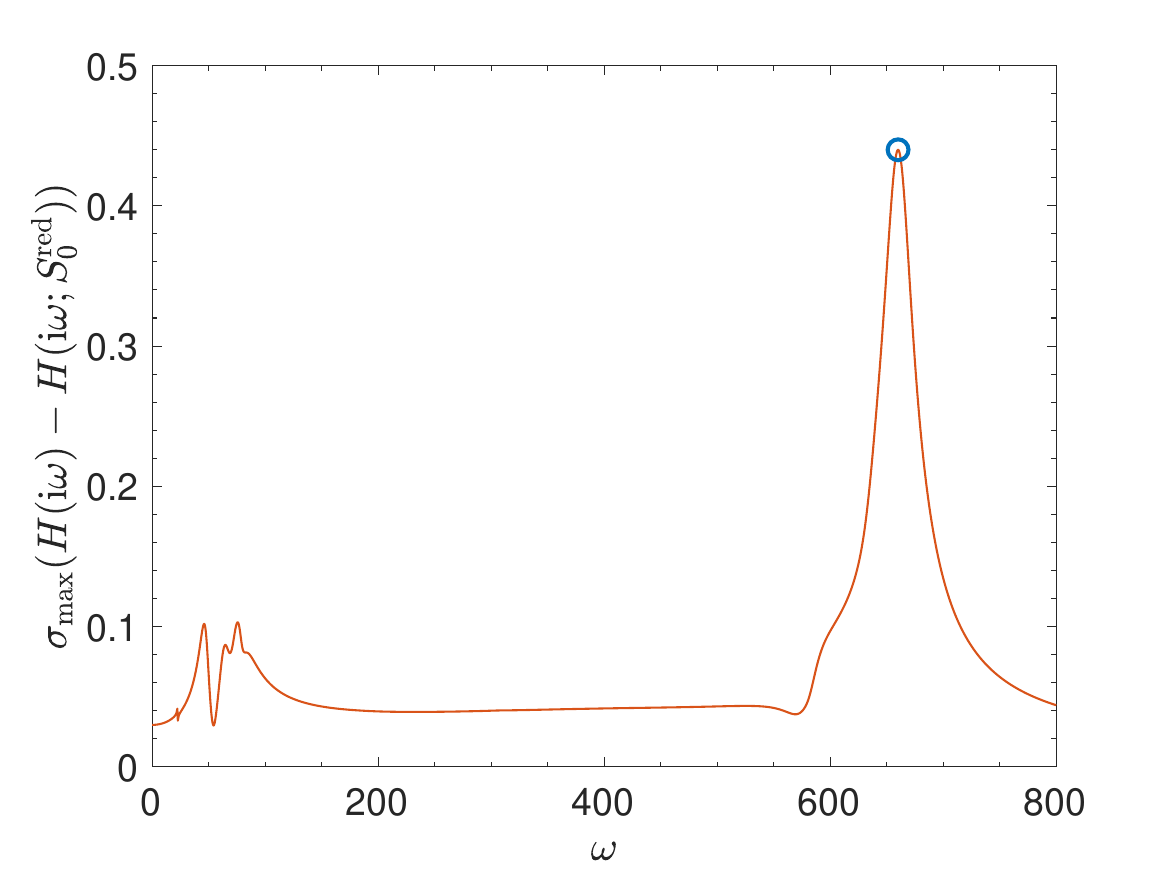}
				&
	\hskip -2ex
	\includegraphics[width = .53\textwidth]{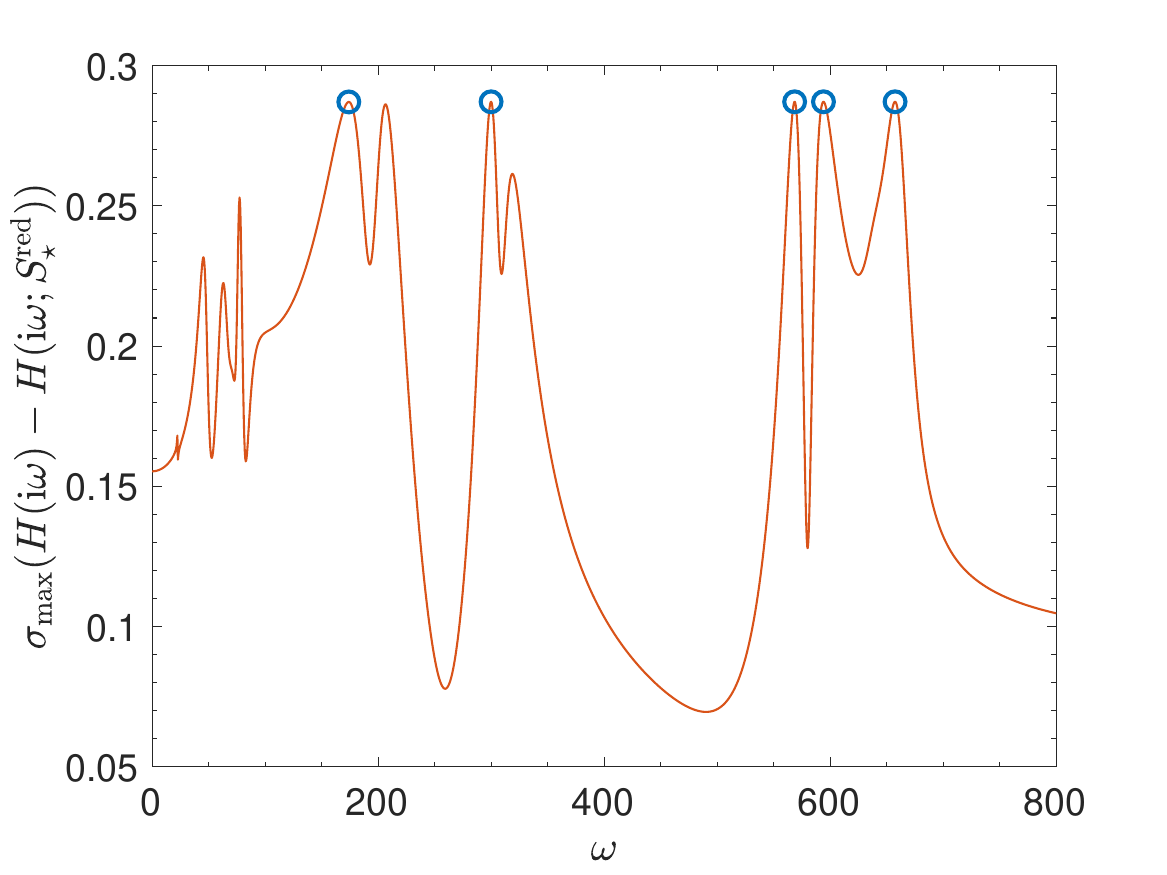}
\end{tabular}
\caption{The plots illustrate the errors of the initial, optimal models by Algorithm \ref{alg:SM}
for the ``cd player model'' with $\sr = 8$, and are analogous to those in Figure \ref{fig:iss_error}.}
\label{fig:cdplayer_error}
\end{figure}

\begin{table}
\begin{center}
\begin{tabular}{c|ccccc}
	$r$	&	${\mathcal F}(S^\red_r)$	&	$\;$ red order $\;$	&	$\; \#$ bfgs iter $\;$	&	$\; \#$ fun evals $\;$ 	& $\; \#$ refine $\;$ 	\\[.2em]
\hline
0		&	0.439972058849		&	12		&		 ---		&	 ---		&		1	\\
1		&	0.291281639337		&	20		&		565		&	1479		&		0	\\
2		&	0.287107598817		&	24		&		134		&	387		&		0	\\
3		&	0.287107598817		&	28		&		1		&	32		&		---	\\[.2em]	
\end{tabular}
\end{center}
\caption{ The iterates and information about the progress of Algorithm \ref{alg:SM} on
the ``cd player model'' for finding a reduced system or order $\sr = 8$. 
The columns represent quantities as in Table \ref{table:iss_iterates}.}
\label{table:cdplayer_iterates}
\end{table}

\subsection{FOM Model}\label{sec:num_exp3}
We next report numerical results on the FOM example available in
the SLICOT library. The FOM example is a linear time-invariant system
of order $n = 1006$, and with $m = p = 1$. The details are given in \cite[Example 3]{Pen2006}.
Here, we are mainly interested in investigating the quality of the estimates
for optimal reduced systems produced by Algorithm \ref{alg:SM}. To this
end, we compare the errors of the reduced systems by Algorithm \ref{alg:SM}
with those of the balanced truncation, as well as the theoretical lower bounds
for the errors in terms of Hankel singular values for varying choices of prescribed
order $\sr$ of the reduced system sought. As in \S\ref{sec:num_exp1} and \S\ref{sec:num_exp2}, we set
the initial estimate $S^{\red}_0$ for a minimizer as the system produced
by the balanced truncation, and the initial reduced system $S^0$ is always 
of order 12 and interpolates the full system $S$ at the imaginary parts of its
most dominant three poles.

In Figure \ref{fig:fom_bt_vs_opt}, the ${\mathcal L}_\infty$ error
 $\| H - H( \cdot ; S^{\red}_\star) \|_{{\mathcal L}_\infty}$ of the optimal 
reduced system $S^{\red}_\star$ by Algorithm \ref{alg:SM} and the balanced
truncation are plotted as functions of the prescribed order $\sr$ of the reduced
system sought. Included in the figure is also the plot of the Hankel singular 
value $\sigma_{\sr + 1}$, a theoretical lower bound for the ${\mathcal L}_\infty$
error  $\| H - H( \cdot ; S^{\red}) \|_{{\mathcal L}_\infty}$ of any system $S^{\red}$
of order $\sr$. Especially when $\sr \in [2,6]$, the errors of the reduced systems 
by Algorithm \ref{alg:SM} are quite close to the theoretical lower bound. 
Indeed, the errors of the reduced systems by Algorithm \ref{alg:SM} usually
differ by the theoretical lower bound by a factor of two at most. Moreover,
in most of cases the errors of reduced systems by Algorithm \ref{alg:SM}
is significantly less than the error of the reduced system by the balanced truncation.

%
%

\begin{figure}
\hskip 12ex
	\includegraphics[width = .8\textwidth]{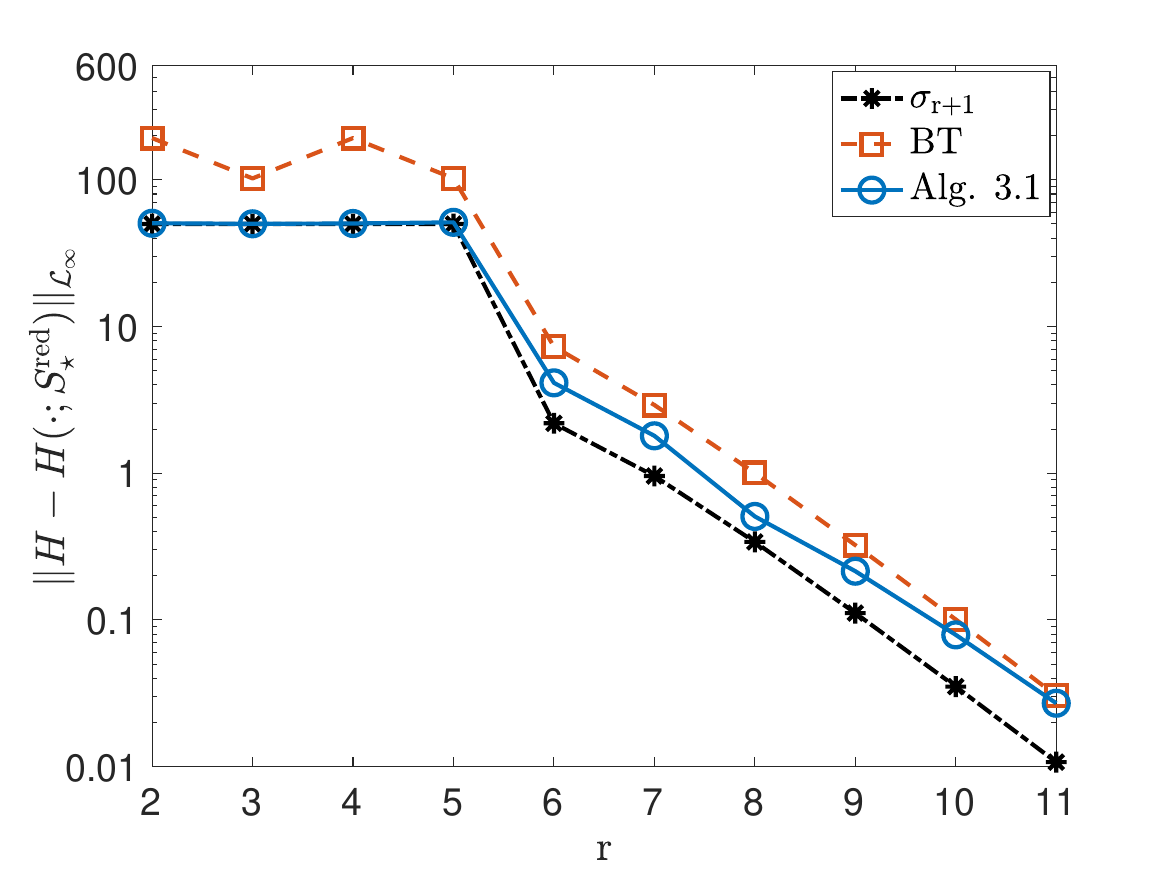}
\caption{ Errors of the reduced systems of order $\sr \in [2, 11]$
produced by Algorithm \ref{alg:SM} and the balanced truncation (BT), as well
as the $(\sr+1)$th largest Hankel singular value $\sigma_{\sr + 1}$ for the FOM example.}
\label{fig:fom_bt_vs_opt}
\end{figure}

\subsection{Systems with Large Order}\label{sec:num_exp4}
Finally, we report results on systems with large order arising from modeling
of power plants due to Rommes and his colleagues. All of these
large-scale examples are available on the website of 
Rommes\footnote{\url{http://sites.google.com/site/rommes/software}}.

Due to the large order of the systems, 
the publicly available implementations of the balanced truncation are 
usually not applicable, and even when they are applicable, they require
substantial amount of computation time. Hence, unlike the previous 
three subsections, we form the initial estimate for the minimizer $S^\red_0$
using the dominant poles of the system quite efficiently. For each
system, we first compute the ten most dominant poles of the system
using the approach in \cite{Men2022}, in particular its 
implementation publicly available at \url{https://zenodo.org/record/5103430}.  
Then $S^\red_0$ of order $\sr$  is constructed so as to interpolate the full system $S$
at the imaginary parts of its $\sr/(4m)$ most dominant poles.
Similarly, the initial reduced system $S_0$ is constructed such that it interpolates $S$
at the imaginary parts of its $\ell$ most dominant poles, where $\ell = 7$
if the system is single-input-single-output (with $m = 1$), and $\ell = 3$ if the system is
multiple-input-multiple-output (with $m > 1$). The order
of the resulting reduced system $S_0$ is $4m \ell$. In all of the examples,
the prescribed order $\sr$ is such that $\sr < 4m\ell$, that is the order of $S_0$
is greater than the prescribed order $\sr$.

Even Algorithm \ref{alg:SM} requires the computation of the ${\mathcal L}_\infty$
norm of systems of order $n + \sr$ a few times (usually not more than 5-6
times in our experiments) in line \ref{large_linfinity}, where $n$ is the large order
of the system. The classical level-set approaches \cite{Boyd1990, Bruinsma1990}
for ${\mathcal L}_\infty$-norm computation and their implementations in Matlab 
are usually no more applicable, or when they are applicable, they take excessive 
amount of time. Instead, we employ the interpolatory subspace framework
in \cite{Aliyev2017} for these large-scale ${\mathcal L}_\infty$-norm computations,
that is for maximizing $\sigma_{\max}( H({\rm i} \omega) - H({\rm i} \omega ; S^{\red}_r)$ over $\omega$ 
at the $r$th iteration. As the approach in \cite{Aliyev2017} is locally convergent, 
whether the initial interpolation points are sufficiently close to global maximizers of
$\sigma_{\max}( H({\rm i} \omega) - H({\rm i} \omega ; S^{\red}_r)$
plays a large role in converging to
a global maximizer. We choose the initial interpolation points as the union of the imaginary parts 
of the ten most dominant poles,
and 15 equally-spaced points on the interval $[-0.1 \, , \, 2 {\mathcal M}]$
with ${\mathcal M}$ denoting the largest of the absolute values of the imaginary part
of the ten most dominant poles.

The absolute and relative errors of the computed reduced systems of order $\sr$
along with the total runtimes are reported in Table \ref{table:large_systems}. For 
systems \texttt{S20PI}$\_$\texttt{n}, \texttt{S40PI}$\_$\texttt{n}, \texttt{M40PI}$\_$\texttt{n}
of order $n = 1182$ or $n = 2182$, we have also computed reduced systems of order $\sr$ 
by means of the balanced truncation. In these examples, the errors of the reduced
systems by Algorithm \ref{alg:SM} are significantly smaller than those of the
reduced systems by the balanced truncation. Moreover, Algorithm \ref{alg:SM}
on these examples require less computation time compared to the balanced truncation.
For systems with larger order, the implementation of the balanced truncation 
that we rely on does not seem suitable; as this implementation is based on
dense linear algebra routines, it cannot cope with such systems. On the other hand,
as evident from Table \ref{table:large_systems}, Algorithm \ref{alg:SM} is also able to deal with
such systems of order ten thousands in a couple of minutes in the worst case.
Most of the runtime of Algorithm \ref{alg:SM} is usually taken by BFGS for 
solving reduced ${\mathcal L}_\infty$-norm minimization problems 
in line \ref{exp_start} involving small systems. In the end, rather than performing
quite a few large-scale ${\mathcal L}_\infty$-norm computations, we
end up performing quite a few small-scale ${\mathcal L}_\infty$-norm computations,
and only a few large-scale ${\mathcal L}_\infty$-norm computations. 
This results is an approach that is not only computationally feasible
but also more reliable, as small-scale ${\mathcal L}_\infty$ norm
computations can be fulfilled accurately, efficiently and reliably 
without worrying about local convergence thanks to the level-set methods \cite{Boyd1990, Bruinsma1990}.

\begin{table}
\begin{center}
\begin{tabular}{|c||c|c|c|llc|}
\hline
	Example 				&	$n, \, m = p$	&   $\sr$  &  approach  & $\;\;\;\,$ error   &  $\:$ rel error  & time    \\	
\hline 
\hline
\texttt{S20PI}$\_$\texttt{n}	&	1182, 1	&   12   & Alg. \ref{alg:SM} &   $7.67\times 10^{-1}$  &  $2.23\times 10^{-1}$  & 19.8 \\ 
\texttt{S20PI}$\_$\texttt{n}	&	1182, 1	&   16   &  Alg. \ref{alg:SM} & $7.66\times 10^{-1}$  &  $2.22\times 10^{-1}$  & 36.2 \\ 
\texttt{S20PI}$\_$\texttt{n}	&	1182, 1	&   12   &  BT &   $1.76\times 10^{0}$  &  $5.11\times 10^{-1}$  & 45.1 \\ 
\texttt{S20PI}$\_$\texttt{n}	&	1182, 1	&   16   &  BT & $1.32\times 10^{0}$  &  $3.84\times 10^{-1}$  & 44.2 \\ 
\hline 
\texttt{S40PI}$\_$\texttt{n}	&	2182, 1	&   12   & Alg. \ref{alg:SM} &  $9.30\times 10^{-1}$  &  $2.78\times 10^{-1}$  & 48.1 \\ 
\texttt{S40PI}$\_$\texttt{n}	&	2182, 1	&   16   & Alg. \ref{alg:SM} &  $6.71\times 10^{-1}$  &  $2.00\times 10^{-1}$  & 38.1 \\ 
\texttt{S40PI}$\_$\texttt{n}	&	2182, 1	&   32   & BT &  $1.75\times 10^{0}$  &  $5.23\times 10^{-1}$  & 410.5 \\ %
\hline 
\texttt{M40PI}$\_$\texttt{n}	&	2182, 3	&   12   & Alg. \ref{alg:SM} &  $1.99\times 10^{0}$  &  $5.22\times 10^{-1}$  & 52.2 \\ 
\texttt{M40PI}$\_$\texttt{n}	&	2182, 3	&   24   & Alg. \ref{alg:SM} &  $1.70 \times 10^{0}$  &  $4.45\times 10^{-1}$  & 117.1 \\ 
\texttt{M40PI}$\_$\texttt{n}	&	2182, 3	&   36   & BT &  $3.07\times 10^0$  &  $8.03\times 10^{-1}$  & 401.9 \\ 
\hline 
\texttt{ww}$\_$\texttt{vref}$\_$\texttt{6405}		&	13251, 1	&   12   & Alg. \ref{alg:SM} &  $5.80\times 10^{-4}$  &  $2.04\times 10^{-1}$  & 9.2 \\ 
 \texttt{ww}$\_$\texttt{vref}$\_$\texttt{6405}		&	13251, 1	&   16   &  Alg. \ref{alg:SM} & $4.19\times 10^{-4}$  &  $1.48\times 10^{-1}$  & 15.1 \\ 
\hline  
\texttt{xingo}$\_$\texttt{afonso}	&	13250, 1	&   12   & Alg. \ref{alg:SM} &  $3.55\times 10^{-2}$  &  $8.74\times 10^{-3}$  & 14.4 \\ 
 \texttt{xingo}$\_$\texttt{afonso}		&	13250, 1	&   16   &  Alg. \ref{alg:SM} & $3.56\times 10^{-2}$  &  $8.77\times 10^{-3}$  & 14.0 \\ 
  \texttt{xingo}$\_$\texttt{afonso}		&	13250, 1	&   20   &  Alg. \ref{alg:SM} & $1.13\times 10^{-2}$  &  $2.79\times 10^{-3}$  & 26.2 \\
\hline 
\texttt{bips07}$\_$\texttt{1998}	&	15066, 4	&   16   & Alg. \ref{alg:SM} &  $1.24\times 10^{1}$  &  $6.30\times 10^{-2}$  & 127.6 \\ 
\texttt{bips07}$\_$\texttt{1998}	&	15066, 4	&   32   & Alg. \ref{alg:SM} &  $9.67\times 10^{0}$  &  $4.91\times 10^{-2}$  & 219.8 \\  
\hline 
\texttt{bips07}$\_$\texttt{3078}	&	21228, 4	&   16   & Alg. \ref{alg:SM} &  $1.27\times 10^{1}$  &  $6.06\times 10^{-2}$  & 200.8 \\ 
\texttt{bips07}$\_$\texttt{3078}	&	21228, 4	&   32   & Alg. \ref{alg:SM} &  $1.00\times 10^{1}$  &  $4.78\times 10^{-2}$  & 274.1 \\ 
\hline 
\end{tabular}
\end{center}
\caption{ The absolute errors $\| H - H(\cdot \, ; \, S^{\red}_\star) \|_{{\mathcal L}_{\infty}}$
and relative errors 
$\| H - H(\cdot \, ; \, S^{\red}_\star) \|_{{\mathcal L}_{\infty}} / 
             \| H \|_{{\mathcal L}_{\infty}}$ for systems of large order, where
$S^{\red}_\star$ is the optimal reduced system by either Algorithm \ref{alg:SM}
or the balanced truncation (BT). Total runtimes in seconds are also listed in the last column. }
\label{table:large_systems}
\end{table}

\section{Software}
A Matlab implementation of Algorithm \ref{alg:SM} is publicly available
at \url{https://zenodo.org/record/8344591}. The numerical results reported
in the previous section are obtained with this implementation.
Scripts are included to reproduce
the results for the CD player model in \S\ref{sec:num_exp2}, 
and the \texttt{xingo}$\_$\texttt{afonso}, \texttt{bips07}$\_$\texttt{1998} examples
in \S\ref{sec:num_exp4}. The results for other
benchmark examples can be obtained similarly.

\section{Conclusion}
We have proposed an approach to find a locally optimal solution of the
${\mathcal L}_\infty$-norm model reduction problem. To our knowledge,
this is the first work on the subject. Our approach is based on the usage
of smooth optimization techniques such as the gradient descent method
and BFGS. A direct application of such smooth optimization techniques
for the ${\mathcal L}_\infty$-norm model reduction problem does not
seem suitable even for systems with modest order, as smooth optimization
techniques converge very slowly and require the evaluation of the
costly ${\mathcal L}_\infty$-norm objective too many times.
Hence, our approach replaces the original system of modest or large
order with a system of small order, and solves the resulting reduced
${\mathcal L}_\infty$-norm minimization problem by means of the
smooth optimization. Then it refines and increases slightly the order
of the reduced system based on the minimizer of this reduced optimization problem.
This refinement is performed with an eye on interpolation between
the full and reduced ${\mathcal L}_\infty$ objectives.
Under smoothness assumptions, admittedly strong in this context, we have 
given formal arguments for the quick convergence of the approach.
We have also described how asymptotic stability constraints on the
small system of prescribed order sought can be incorporated into
the approach. The numerical experiments on a variety of real benchmark
examples indicate that our approach retrieves indeed a locally optimal
solution of the ${\mathcal L}_\infty$-norm model reduction problem in
practice. Moreover, on some small benchmark examples, we have
obtained reduced systems not far away for from being optimal globally according to the 
theoretical lower bounds in terms of Hankel singular values. Experiments
on large benchmark examples illustrate that the approach is usually suitable for  
systems of order a few ten thousands.

The quality of the converged locally optimal solution depends on the initial
guess for the optimal reduced system. To generate the initial guess, we have employed 
two different strategies based on the balanced truncation and dominant poles. The first
of these strategies may not be computationally feasible if the original system
has large order, whereas the second strategy seems suitable even for large
systems. However, a strategy generating a good initial guess is certainly
worth further research. The proposed approach typically requires a few
large-scale ${\mathcal L}_\infty$-norm computations. Performing these ${\mathcal L}_\infty$-norm
computations accurately, especially without getting stagnated at a local maximizer 
that is not optimal globally, is crucial for the reliability of the proposed approach.
We have employed the interpolatory subspace framework in \cite{Aliyev2017} with the
initial interpolation points chosen based on the dominant poles for these large-scale 
${\mathcal L}_\infty$-norm computations. This approach usually seems to work well in practice
for large-scale ${\mathcal L}_\infty$-norm computations. Still, we hope to
explore further a good initial interpolation selection strategy for \cite{Aliyev2017} 
so that it converges globally, leading to the correct ${\mathcal L}_\infty$ norm
with very high probability. Other efficient and accurate candidates for large-scale
${\mathcal L}_\infty$-norm computation are worth studying. In \cite{FlaBG2013},
the original system is replaced by a smaller order system obtained from the Loewner
framework \cite{MayA2007} to reduce the burden of large-scale ${\mathcal L}_\infty$-norm 
computations. We have not attempted here to incorporate the Loewner framework
into our approach. As a future work, our approach can possibly benefit from the Loewner framework;
for instance, the initial reduced system replacing the full system can perhaps be
obtained using the Loewner framework. Our quick convergence result for the
proposed approach is under strong smoothness assumptions. Investigating the
order of convergence of the approach in the likely nonsmooth setting (i.e.,
when the ${\mathcal L}_\infty$ objective at the converged minimizer is nonsmooth)  
is a possible direction for future research. Last but not the least, the convergence
of smooth optimization techniques such as BFGS is more of an empirical phenomenon 
in the current-state-of-art with some intuition as to why. Analyzing the convergence
of smooth optimization techniques in the presence of nonsmoothness at the optimizers
is an important open problem.

%


\bibliography{HinfMOR}

\end{document}